\documentclass[11pt]{amsart}
\usepackage{amsmath,amssymb,amsfonts,exscale}

\usepackage[curve,matrix,arrow,cmtip]{xy}
\usepackage{verbatim}

\makeatletter
\def\subsubsection{\@startsection{subsubsection}{3}%
  \z@{.5\linespacing\@plus.7\linespacing}{-.5em}%
  {\normalfont\bfseries}}
\makeatother

\makeatletter
\def\subsection{\@startsection{subsection}{2}%
  \z@{.5\linespacing\@plus.7\linespacing}{.7\linespacing}%
  {\normalfont\bfseries}}
\makeatother

\DeclareSymbolFont{bbold}{U}{bbold}{m}{n}
\DeclareSymbolFontAlphabet{\mathbbold}{bbold}

\newdir^{ (}{{}*!/-3pt/\dir^{(}}    
\newdir_{ (}{{}*!/-3pt/\dir_{(}}

\newtheorem*{theorem A}{Theorem A}
\newtheorem*{theorem B}{Theorem B}

\def\Gr{{\rm Gr}}

\def\Frob{\mathop{\rm Frob}\nolimits}

\def\Tr{\mathop{\rm Tr}\nolimits}

\def\End{\mathop{\rm End}\nolimits}

\def\Spec{\mathop{\rm Spec}\nolimits}

\def\im{\mathop{\rm im}\nolimits}

\def\Mat{\mathop{\rm Mat}\nolimits}

\def\GL{\mathop{\rm GL}\nolimits}
\def\SL{\mathop{\rm SL}\nolimits}

\def\Bun{\mathop{\rm Bun}\nolimits}
\def\Rep{\mathop{\rm Rep}\nolimits}

\def\Maps{\mathop{\rm Maps}\nolimits}

\def\VinBun{\mathop{\rm VinBun}\nolimits}

\def\Vin{\mathop{\rm Vin}\nolimits}
\def\Kost{\mathop{\rm Kostant}\nolimits}

\def\Qellbar{\mathop{\overline{\BQ}_\ell}\nolimits}
\def\IC{\mathop{\rm IC}\nolimits}

\def\Asymp{\mathop{\rm Asymp}\nolimits}

\def\add{{\rm add}}

\newbox\starbox 
\setbox\starbox=\hbox{\vphantom{\underline{\ }}\rlap{\kern4pt\small?}\smash{\lower2pt\hbox{\Large$\square$}}}

\def\hatE{{\mathchoice
  {\hbox{\rlap{\smash{\kern1pt\lower1pt\hbox{$\widehat{\phantom{\hbox{$E$}}}$}}}$E$}}
  {\hbox{\rlap{\smash{\kern1pt\lower1pt\hbox{$\widehat{\phantom{\hbox{$E$}}}$}}}$E$}}
  {\widehat E}
  {\widehat E}}}

\def\hatW{\hbox{\rlap{\smash{\lower1pt\hbox{$\widehat{\phantom{\hbox{$W$}}}$}}}$W$}}

\def\tildeW{\hbox{\rlap{\smash{\lower1pt\hbox{$\widetilde{\phantom{\hbox{$W$}}}$}}}$W$}}

\newbox\checkWbox
\setbox\checkWbox\hbox{\rlap{\smash{\kern.8pt\lower4pt\hbox{\huge \v{}}}}$W$}

\def\barBun{{\overline{\Bun}}}
\def\tildeBun{{\widetilde{\Bun}}}

\def\Gr{{\rm Gr}}
\def\D{{\rm D}}

\def\circV{{\mathchoice{\circVbig}{\circVbig}{\circVscript}{\circVscriptscript}}}
\def\circVbig{\hbox{\text{\it\r{V}}}}
\def\circVscript{\hbox{\scriptsize\text{\it\r{V}}}}
\def\circVscriptscript{\mbox{\tiny\text{\it\r{V}}}}
\def\circVprime{{\mathchoice{\circV\kern1.8pt{}^\prime}{\circV\kern1.8pt{}^\prime}
                            {\circVscript\kern1.3pt{}^\prime}{\circVscriptscript\kern1pt{}^\prime}}}

\def\circVpprime{{\mathchoice{\circV\kern1.8pt{}^{\prime \prime}}{\circV\kern1.8pt{}^{\prime \prime}}
                            {\circVscript\kern1.3pt{}^{\prime \prime}}{\circVscriptscript\kern1pt{}^{\prime \prime}}}}

\def\lambdach{\check\lambda}
\def\Lambdach{\check\Lambda}
\def\alphacheck{\check\alpha}
\def\thetacheck{\check\theta}
\def\betacheck{\check\beta}

\let\epsilon\varepsilon
\let\setminus\smallsetminus

\let\leq\leqslant
\let\geq\geqslant

\newtheorem{theorem}[subsubsection]{Theorem}

\newtheorem{corollary}[subsubsection]{Corollary}

\newtheorem{proposition-definition}[subsubsection]{Proposition-Definition}
\newtheorem{theorem-definition}[subsubsection]{Theorem-Definition}

\newtheorem{lemma}[subsubsection]{Lemma}

\newtheorem{proposition}[subsubsection]{Proposition}

\newtheorem{remark}[subsubsection]{Remark}

\newcommand{\BA}{{\mathbb{A}}}

\newcommand{\BD}{{\mathbb{D}}}

\newcommand{\BF}{{\mathbb{F}}}
\newcommand{\BG}{{\mathbb{G}}}

\newcommand{\BQ}{{\mathbb{Q}}}

\newcommand{\BZ}{{\mathbb{Z}}}

\newcommand{\Fp}{{\mathfrak{p}}}

\newcommand{\Fs}{{\mathfrak{s}}}

\newcommand{\CB}{{\mathcal B}}

\newcommand{\CH}{{\mathcal H}}
\newcommand{\CI}{{\mathcal I}}

\newcommand{\CK}{{\mathcal K}}

\newcommand{\CO}{{\mathcal O}}

\newcommand{\CU}{{\mathcal U}}

\newcommand{\CY}{{\mathcal Y}}

\newcommand{\ssec}{\subsection}
\newcommand{\sssec}{\subsubsection}

\def\longto{\longrightarrow}
\def\into{\hookrightarrow}
\let\onto\twoheadrightarrow

\def\longinto{\lhook\joinrel\longrightarrow}

\def\longonto{\ontoover{\ }}

\newbox\mybox
\def\arrover#1{\mathrel{
       \setbox\mybox=\hbox spread 1.4em
              {\hfil$\scriptstyle#1\vphantom{g}$\hfil}
       \vbox{\offinterlineskip\copy\mybox
             \hbox to\wd\mybox{\rightarrowfill}}}}
\def\larrover#1{\mathrel{
       \setbox\mybox=\hbox spread 1.4em{\hfil$\scriptstyle#1$\hfil}
       \vbox{\offinterlineskip\copy\mybox
             \hbox to\wd\mybox{\leftarrowfill}}}}

\def\ontoover#1{\mathrel{
       \setbox\mybox=\hbox spread 1.4em{\hfil$\scriptstyle#1$\hfil}
       \vbox{\offinterlineskip\copy\mybox
             \hbox to\wd\mybox{\rightarrowfill\hskip-2.8mm
                               $\rightarrow$}}}}
\def\leftontoover#1{\mathrel{
       \setbox\mybox=\hbox spread 1.4em{\hfil$\scriptstyle#1$\hfil}
       \vbox{\offinterlineskip\copy\mybox
             \hbox to\wd\mybox{$\leftarrow$\hskip-2.8mm
                               \leftarrowfill}}}}

\newbox\invlimsymbol
\setbox\invlimsymbol=\vtop{\hbox{\rm lim}\vskip-8pt
        \hbox{\hskip1pt$\scriptstyle\longleftarrow$}\vskip-1pt}

\newbox\dirlimsymbol
\setbox\dirlimsymbol=\vtop{\hbox{\rm lim}\vskip-8pt
        \hbox{\hskip1pt$\scriptstyle\longrightarrow$}\vskip-1pt}

\hyphenation{morph-ism}
\hyphenation{homo-morph-ism}
\hyphenation{iso-morph-ism}
\hyphenation{mono-morph-ism}
\hyphenation{epi-morph-ism}
\hyphenation{endo-morph-ism}
\hyphenation{auto-morph-ism}
\hyphenation{morph-isms}
\hyphenation{homo-morph-isms}
\hyphenation{iso-morph-isms}
\hyphenation{mono-morph-isms}
\hyphenation{epi-morph-isms}
\hyphenation{endo-morph-isms}
\hyphenation{auto-morph-isms}

\begin{document}

\title[Geometric Bernstein Asymptotics]{Geometric Bernstein Asymptotics and the Drinfeld-Lafforgue-Vinberg degeneration for arbitrary reductive groups}

\author{Simon Schieder}
\thanks{Dept. of Mathematics, MIT, Cambridge, MA 02139, USA}

\address{Dept. of Mathematics, MIT Cambridge, MA 02139, USA}
\email{schieder@mit.edu}

\begin{abstract}
We define and study the Drinfeld-Lafforgue-Vinberg compactification $\barBun_G$ of the moduli stack of $G$-bundles $\Bun_G$ for an arbitrary reductive group $G$; its definition is given in terms of the Vinberg semigroup of $G$, and is due to Drinfeld (unpublished). Throughout the article we prefer to view the space $\barBun_G$ as a canonical multi-parameter degeneration of $\Bun_G$ which we call the Drinfeld-Lafforgue-Vinberg degeneration $\VinBun_G$. We construct local models for the degeneration $\VinBun_G$ which ``factorize in families'' and use them to study its singularities, generalizing results of the article \cite{Sch1} which was confined with the case $G = \SL_2$.

The multi-parameter degeneration $\VinBun_G$ gives rise to, for each parabolic $P$ of $G$, a nearby cycles functor $\Psi_P$. Our main theorem expresses the stalks of these nearby cycles $\Psi_P$ in terms of the cohomology of the parabolic Zastava spaces. From this description we deduce that the nearby cycles of $\VinBun_G$ correspond, under the sheaf-function correspondence, to Bernstein's asymptotics map on the level of functions. This had been speculated by Bezrukavnikov-Kazhdan \cite{BK} and Chen-Yom Din \cite{CY} and conjectured in a precise form by Sakellaridis \cite{Sak2}.
\end{abstract}

\maketitle

\newpage

\tableofcontents

\newpage

\section{Introduction}

Let $X$ be a smooth projective curve over an algebraically closed field $k$, let $G$ be a reductive group over $k$, and let $\Bun_G$ denote the moduli stack of $G$-bundles on $X$. In this article we begin the study of the canonical relative compactification $\barBun_G$ of $\Bun_G$ due to V. Drinfeld (unpublished) for an arbitrary reductive group $G$; the case $G = \SL_2$ was studied in \cite{Sch1}. As in \cite{Sch1} we choose to work with a minor modification of the compactification $\barBun_G$ which we denote by $\VinBun_G$ and refer to as the \textit{Drinfeld-Lafforgue-Vinberg degeneration} of $\Bun_G$.

\medskip

For $G = \GL_n$ certain smooth open substacks of the space $\barBun_G$ were used by Drinfeld and by L. Lafforgue in their celebrated work on the Langlands correspondence for function fields (\cite{Varieties of modules of F -sheaves}, \cite{Cohomology of compactified manifolds of modules of F -sheaves of rank 2}, \cite{Lafforgue JAMS}). The spaces $\barBun_G$ and $\VinBun_G$ are however already singular for $G=\SL_2$. The goal of the present article is to begin the study of their singularities for an arbitrary reductive group $G$, generalizing our earlier work \cite{Sch1} for $G=\SL_2$. This study is originally motivated by the geometric Langlands program (\cite{G3}, \cite{G4}), for example by applications to Drinfeld's and Gaitsgory's \textit{miraculous duality} (\cite{DrG1}, \cite{DrG2}, \cite{G2}); see Subsection \ref{stalks and applications} below for such applications of the current work. In the present article we however focus on a novel application to the classical theory, to the \textit{Bernstein asymptotics map} on the level of functions.

\bigskip

\ssec{The degeneration $\VinBun_G$ for arbitrary $G$}

\sssec{The Vinberg semigroup}
In \cite{V} E. B. Vinberg has constructed a canonical multi-parameter degeneration $\Vin_G \to \BA^r$ of an arbitrary reductive group $G$ of semisimple rank $r$; this degeneration carries a semigroup structure and is called the \textit{Vinberg semigroup}. Its fibers over the complement of the union of all coordinate hyperplanes are isomorphic to the group $G$; its fibers over the coordinate hyperplanes can be described in group-theoretic terms related to the parabolic subgroups of $G$. A certain well-behaved open subvariety of the Vinberg semigroup, the \textit{non-degenerate locus}, is closely related to the wonderful compactification of the adjoint group $G_{adj}$ of $G$ constructed by De Concini and Procesi \cite{DCP}.

\medskip

\sssec{The definition of $\VinBun_G$}
As the Vinberg semigroup $\Vin_G$ carries a natural $G \times G$-action, one may form the mapping stack
$$\Maps(X, \Vin_G / G \times G)$$
parametrizing maps from the curve $X$ to the quotient $\Vin_G / G \times G$. The stack $\VinBun_G$ is then obtained from this mapping stack by imposing certain non-degeneracy conditions. Like the Vinberg semigroup $\Vin_G$, the stack $\VinBun_G$ comes equipped with a natural map
$$\VinBun_G \ \longto \ \BA^r \, ;$$
just like $\Vin_G$ forms a canonical degeneration of the group $G$, the stack $\VinBun_G$ forms, via this map, a canonical degeneration of $\Bun_G$. The compactification $\barBun_G$ can be obtained from $\VinBun_G$ as the quotient by a maximal torus of $G$.

\medskip

\sssec{The case $G = \SL_2$}
As is discussed in \cite{Sch1}, for $G = \SL_2$ the degeneration $\VinBun_G$ can be described very concretely as follows: It parametrizes triples $(E_1, E_2, \varphi)$ consisting of two $\SL_2$-bundles $E_1$, $E_2$ on the curve $X$ together with a morphism of the associated vector bundles $\varphi: E_1 \to E_2$ which is required to be not the zero map. Taking the determinant of the map $\varphi$ yields the desired map
$$\VinBun_G \ \longto \ \BA^1 \, .$$

\medskip

\ssec{Stratifications}

\sssec{Stratification by parabolics}
Let $T$ denote a maximal torus of the reductive group $G$, let $B$ denote a Borel subgroup containing $T$ and let $Z_G$ denote the center of $G$. The target affine space $\BA^r$ of the map $\Vin_G \to \BA^r$ naturally forms a semigroup completion of the adjoint torus $T/Z_G$. Thus its coordinate stratification is naturally indexed by standard parabolic subgroups $P$ of $G$. This stratification induces stratifications of $\Vin_G$ and of $\VinBun_G$ which are also indexed by standard parabolic subgroups:
$$\VinBun_G \ \ = \ \ \bigcup_{P} \ \VinBun_{G,P}$$

\medskip

\sssec{Defect stratifications}
To each point in any of the loci $\VinBun_{G,P}$ we associate a simpler geometric datum which we call its \textit{defect value}; the defect value governs the singularity of the point in the moduli space $\VinBun_G$. For $G=\SL_2$, the defect values are effective divisors on the curve $X$. For an arbitrary reductive group $G$ and for $P=B$, the defect values are effective divisors on $X$ valued in the monoid of positive coweights $\Lambdach_G^{pos}$ of $G$. For an arbitrary reductive group $G$ and an arbitrary parabolic $P$, the defect values are certain points in the affine Grassmannian $\Gr_M$ of the Levi $M$ of $P$.
We obtain finer stratifications, the \textit{defect stratifications}, of the loci $\VinBun_{G,P}$ by requiring certain numerical invariants of the defect value to remain constant.

\medskip

\ssec{Main results -- Geometry}

\sssec{Stalks of nearby cycles}

The degeneration $\VinBun_G \to \BA^r$ gives rise to, for each standard parabolic $P$ of $G$, a one-parameter family connecting the $G$-stratum $\VinBun_{G,G}$ and the $P$-stratum $\VinBun_{G,P}$. Let $\Psi_P$ denote the nearby cycles perverse sheaf of the one-parameter family corresponding to the parabolic $P$. Vaguely speaking, our main theorem regarding the geometry of $\VinBun_G$ then states:

\begin{theorem A}
The stalks of $\Psi_P$ along the defect stratification of $\VinBun_{G,P}$ are isomorphic to the cohomology of the defect-free parabolic Zastava spaces from \cite{BFGM}.
\end{theorem A}

We refer the reader to Sections \ref{Statements of theorems -- Geometry} and \ref{Proofs I --- Construction of local models for VinBun_G} for the definitions of the objects appearing in the theorem, and to Theorem \ref{nearby cycles theorem} for a precise formulation. As will be discussed below, Theorem A shows that the nearby cycles of the degeneration $\VinBun_G$ may be regarded as a global geometric version of the Bernstein asymptotics map. In this context, the description of the stalks of the nearby cycles in Theorem A in terms of the cohomology of the Zastava spaces may be viewed as a geometric analog of the classical Gindikin-Karpelevich formulas for the Bernstein asymptotics (see Subsection \ref{Proof of Sakellaridis's conjecture} below).

\medskip

\sssec{Stalks of the extension of the constant sheaf, and applications in the geometric Langlands program}
\label{stalks and applications}

\mbox{}

\medskip

We also give a description of the stalks of the $*$-extension of the constant sheaf from the open stratum $\VinBun_{G,G}$ which is closely related to Theorem A; see Theorem \ref{ij} for its formulation.
This description of the $*$-extension of the constant sheaf provides the geometric input for Gaitsgory's proof that the \textit{miraculous duality} (\cite{DrG1}, \cite{DrG2}, \cite{G2}) acts as the identity on \textit{cuspidal objects}.
Furthermore, this description will be applied in the forthcoming PhD thesis of Wang \cite{W2} to geometrically construct Drinfeld's \textit{strange invariant bilinear form} on the space of automorphic forms for arbitrary reductive groups.

\bigskip

\ssec{Main results -- Bernstein asymptotics}

For this paragraph only let $G$ now denote a reductive group over a non-archimedean local field $F$. Let $N$ denote the unipotent radical of the Borel $B$ of $G$. The \textit{Bernstein asymptotics map} is a map of $G \times G$-modules
$$\Asymp: \ C^{\infty}(G) \ \longto \ C^{\infty}((G/N \times G/N^-)/T) \, .$$
It can be defined either via a universal property related to the asymptotics of matrix coefficients,
or as a composition of the \textit{orispheric transform} with the inverse of the \textit{intertwining operator} (see \cite{BK}, \cite{SakV}, \cite{Sak1}). Our geometric results imply that the nearby cycles of $\VinBun_G$ form a geometric or categorical version of the Bernstein asymptotics map, as we now discuss.

\medskip

Before making a more precise statement, we recall that Bezrukavnikov and Kazhdan \cite{BK} have used the Bernstein asymptotics map to prove Bernstein's second adjointness theorem for reductive groups over non-archimedean local fields. In \cite{BK} they speculate whether the Bernstein asymptotics map is related to some nearby cycles construction on the geometric level. More precisely, they observe that their description of the Bernstein map as the composition of the orispheric transform with the inverse of the intertwining operator is formally analogous to the definition of the \textit{twisted Harish-Chandra functor} in \cite{BFO} (see also \cite{ENV}, \cite{CY}), which works in the finite-dimensional setting (i.e., over an algebraically closed field instead of over a local field). Since it is shown in \cite{BFO} that the twisted Harish-Chandra functor can be realized as the functor of Verdier specialization in the De Concini-Procesi wonderful compactification, they ask whether similarly the Bernstein asymptotics can be viewed as some nearby cycles procedure of an appropriate space; the same question has also been raised by Chen and Yom Din \cite{CY}.

\medskip

Similar predictions have been made by Sakellaridis and Venkatesh; in \cite{SakV} they have constructed asymptotics maps for arbitrary spherical varieties over non-archimedean local fields, which reduce to the above case of Bernstein asymptotics when the spherical variety is the group itself. Sakellaridis \cite{Sak2} has given a precise conjecture along the lines of the question of Bezrukavnikov and Kazhdan, relating the Bernstein asymptotics to the nearby cycles of the degeneration $\VinBun_G$. We deduce this conjecture for arbitrary reductive groups from Theorem A. We refer the reader to Theorem \ref{theorem Sakellaridis's conjecture} below for a precise statement; broadly speaking, its assertion is:

\medskip

\begin{theorem B}[Sakellaridis's conjecture from \cite{Sak2}]
The nearby cycles sheaf $\Psi_{\VinBun_G}$ \textit{factorizes}, i.e., its stalks decompose into tensor products of \textit{local factors}. The functions corresponding to the local factors of $\Psi_{\VinBun_G}$ under the sheaf-function correspondence agree with the Bernstein asymptotics of the basic Schwartz functions.
\end{theorem B}

\bigskip

\ssec{Proofs via local models}

We study the degeneration $\VinBun_G$ by constructing certain \textit{local models} for it which feature the same singularities as $\VinBun_G$ but possess a \textit{factorization property}, in the sense of Beilinson and Drinfeld (\cite{BD1}, \cite{BD2}). Our models thus play an analogous role for the space $\VinBun_G$ as the Zastava spaces from \cite{FM}, \cite{FFKM}, \cite{BFGM} play for Drinfeld's spaces of quasimaps (see e.g. \cite{BG1}, \cite{BG2}). Our local models may in fact also be viewed as canonical degenerations of the Zastava spaces.

\medskip

We will in fact construct one local model for each parabolic $P$ of $G$, which will then be used to study the singularities of the degeneration into the $P$-locus $\VinBun_{G,P}$ of $\VinBun_G$. We furthermore point out that our local models are not quite factorizable in the sense of Beilinson and Drinfeld, but rather \textit{factorizable in families}: They themselves form multi-parameter degenerations whose fibers are factorizable in compatible ways. Our geometric main theorems are then deduced from certain geometric properties of the local models; Theorem B follows from Theorem A under the sheaf-function correspondence by computing the function corresponding to the cohomology sheaves of the defect-free parabolic Zastava spaces.

\bigskip

\ssec{Structure of the article}

We now briefly outline the contents of the individual sections.
In Section~\ref{The compactification and the degeneration} we recall various facts about the Vinberg semigroup, define the spaces $\VinBun_G$ and $\barBun_G$, and discuss their basic properties. In Section \ref{The defect stratification} we construct the aforementioned defect stratifications of the loci $\VinBun_{G,P}$. In Sections \ref{Statements of theorems -- Geometry} and \ref{Statements of theorems -- Bernstein asymptotics} we state our main theorems, including precise versions of Theorems A and B sketched in this introduction.

\medskip

The remaining sections deal with the proofs of the above theorems. In Section \ref{Proofs I --- Construction of local models for VinBun_G} we construct the local models for the loci $\VinBun_{G,P}$ and study their geometry. In Section \ref{Proofs II --- Sheaves} we deduce the aforementioned results about the nearby cycles and the $*$-extension of the constant sheaf from the geometric facts of the previous section. In Section \ref{Proofs III -- Bernstein asymptotics} we compute the function corresponding to the nearby cycles under the sheaf-function dictionary and deduce the results about the Bernstein asymptotics map.

\bigskip

\ssec{Conventions and notation}
\label{Conventions and notation}

We will invoke a formalism of mixed sheaves; for concreteness we will work with the formalism of $\ell$-adic Weil sheaves: We assume the curve $X$ is defined over a finite field, and work with Weil sheaves over the algebraic closure $k$ of the finite field. For a scheme or stack $Y$, we denote by $D(Y)$ the derived category of constructible $\Qellbar$-sheaves on $Y$. We fix once and for all a square root $\Qellbar(\tfrac{1}{2})$ of the Tate twist $\Qellbar(1)$. We normalize all IC-sheaves to be pure of weight $0$; thus on a smooth variety $Y$ the IC-sheaf is equal to $\Qellbar[\dim Y](\tfrac{1}{2} \dim Y)$. Our conventions for nearby cycles are stated in Subsection \ref{Recollections} below. We denote the exterior product of sheaves on a product space by the symbol $\boxtimes$. In the case of a fiber product over a space $Y$ we denote by $\boxtimes_{Y}$ the $*$-restriction of the exterior product to the fiber product over $Y$, shifted by $[- \dim Y]$ and twisted by $(-\tfrac{1}{2} \dim Y)$. Finally, we denote the restriction of a space or a sheaf to a ``disjoint locus'' by the symbol $\circ$, whenever there is no confusion about what the disjointness is referring to. For example, we denote by
$$X^{(n_1)} \stackrel{\circ}{\times} X^{(n_2)}$$
the open subset of the product $X^{(n_1)} \times X^{(n_2)}$ of symmetric powers of the curve $X$ consisting of those pairs of effective divisors with disjoint supports, and call it {the disjoint locus of} $X^{(n_1)} \times X^{(n_2)}$.

\bigskip
\bigskip
\bigskip

\ssec{Acknowledgements}

I would like to express my sincere gratitude to Dennis Gaitsgory and Vladimir Drinfeld, for recommending to study the space $\barBun_G$, for many conversations and suggestions, as well as for their continued guidance and support. I would like to thank Yiannis Sakellaridis for drawing my attention to the Bernstein asymptotics map as well as for numerous conversations. I would like to thank Jonathan Wang for conversations about the Vinberg semigroup $\Vin_G$ and about the reductive monoid $\overline{M}$. Finally, I would like to thank Roman Bezrukavnikov for helpful conversations related to his work \cite{BK}.

\newpage

\section{The compactification and the degeneration}
\label{The compactification and the degeneration}

\bigskip

\ssec{The Vinberg semigroup}

E. B. Vinberg has associated to any reductive group $G$ a canonical algebraic semigroup, the \textit{Vinberg semigroup} $\Vin_G$ of $G$ (\cite{V}). Vinberg's work assumes the characteristic of the base field to be $0$; the case of arbitrary characteristic can be found in \cite{Ri1}, \cite{Ri2}, \cite{Ri3}, \cite{Ri4}, and \cite{BKu}.
Here we recall the definition of $\Vin_G$ and some of its basic properties. For proofs and further background about reductive semigroups and the Vinberg semigroup we refer the reader to the above articles as well as to \cite{Pu}, \cite{Re}, and \cite{DrG2}.

\medskip

\sssec{Notation related to the group}
Let $G$ be a reductive group over $k$, let $r$ denote the semisimple rank of $G$, and let $Z_G$ denote the center of $G$.
For simplicity we assume that the derived group $[G,G]$ of $G$ is simply connected.
We fix a maximal torus $T$ of $G$ and a Borel subgroup $B$ containing $T$, and denote by $W$ the Weyl group of $G$ and by $w_0$ its longest element. Let $\Lambda_G$ denote the weight lattice of $G$, let $\Lambdach_G$ denote the coweight lattice of $G$, let $\CI$ denote the set of vertices of the Dynkin diagram of $G$, let $(\alpha_i)_{i \in \CI} \in \Lambda_G$ denote the simple roots, and let $(\alphacheck_i)_{i \in \CI} \in \Lambdach_G$ denote the simple coroots. We denote by $\Lambda_G^+$ the collection of dominant weights, and by $\Lambda_G^{pos}$ the collection of positive weights, and analogously for $\Lambdach_G$. We denote by $\leq$ the usual partial order on $\Lambda_G$ and $\Lambdach_G$.

\medskip

\sssec{Notation related to a parabolic}
By a parabolic we will by default mean a \textit{standard} parabolic, i.e., a parabolic containing the chosen Borel $B$. For a parabolic $P$ we denote by $U_P$ its unipotent radical and by $M$ the corresponding Levi quotient and subgroup. The subset of vertices in $\CI$ corresponding to the parabolic $P$ will be denoted by $\CI_M$, the semisimple rank of $M$ by $r_M$, and its center by $Z_M$. Finally, we denote by $\Lambdach_{G,P}$ the quotient
$$\Lambdach_{G,P} \ := \ \Lambdach_G / \sum_{i \in \CI_M} \BZ \check\alpha_i$$
and by $\Lambdach_{G,P}^{pos}$ the image of $\Lambdach_G^{pos}$ under the natural projection $\Lambdach_G \onto \Lambdach_{G,P}$. Using the monoid $\Lambdach_{G,P}^{pos}$ we define a partial ordering $\leq$ on $\Lambdach_{G,P}$ as for $\Lambdach_G$.

\medskip

\sssec{The enhanced group}
We define the \textit{enhanced group} of $G$ as
$$G_{enh} \ = \ (G \times T) /Z_{G}$$
where the center $Z_G$ of $G$ acts anti-diagonally on $G \times T$, i.e., by the formula $(g,t).z = (zg, z^{-1}t)$. The group $G$ is naturally a subgroup of $G_{enh}$ via the inclusion of the first coordinate
$$G \ \longinto \ G_{enh} \, .$$

\medskip

\sssec{The definition of $\Vin_G$ via classification of reductive monoids}
\label{The definition of Vin_G via classification of reductive monoids}
The Vinberg semigroup $\Vin_G$ is an affine algebraic monoid whose group of units is open and dense in $\Vin_G$ and equal to the reductive group $G_{enh}$. We now recall its definition via the Tannakian formalism and the classification of \textit{reductive monoids}, i.e., the classification of irreducible affine algebraic monoids whose group of units is dense, open, and a reductive group.

\medskip

Let $\Rep(G_{enh})$ denote the category of finite-dimensional representations of $G_{enh}$.
By the classification of reductive monoids (see \cite{Pu}, \cite{Re}, \cite{V}), the monoid $\Vin_G$ is uniquely determined by the full subcategory
$$\Rep(\Vin_G) \ \subset \ \Rep(G_{enh})$$
consisting of all those representations $V \in \Rep(G_{enh})$ for which the action of $G_{enh}$ extends to an action of the monoid $\Vin_G$.
We can thus define $\Vin_G$ by specifying this full subcategory $\Rep(\Vin_G)$ of $\Rep(G_{enh})$. To do so, note first that any representation $V$ of $G_{enh}$ admits a canonical decomposition as $G_{enh}$-representations
$$V \ = \ \bigoplus_{\lambda \in \Lambda_T} V_\lambda$$
according to the action of the center $Z_{G_{enh}} = (Z_G \times T)/Z_G = T$, i.e., such that $Z_{G_{enh}} = T$ acts on each $V_\lambda$ by the character $\lambda$. Each $V_\lambda$ also naturally forms a $G$-representation via the inclusion $G \into G_{enh}$; its central character as a $G$-representation is equal to the restriction $\lambda|_{Z_G}$.

\medskip

With this notation, the subcategory $\Rep(\Vin_G)$ of $\Rep(G_{enh})$ is defined as follows: It contains a representation $V \in \Rep(G_{enh})$ if and only if for each $\lambda \in \Lambda_T$ the weights of the summand $V_{\lambda}$, considered as a representation of $G$, are all $\leq \lambda$.

\medskip

\sssec{Basic properties of the Vinberg semigroup}
The variety $\Vin_G$ is normal and carries a natural $G \times G$-action which extends the natural $G \times G$-action on $G_{enh}$. It moreover carries a natural $T$-action which extends the $T$-action on $G_{enh} = (G \times T) / Z_G$ defined by acting on the second factor; this action commutes with the $G \times G$-action, and will simply be referred to as \textit{the} $T$-action on $\Vin_G$.

\medskip

The Vinberg semigroup can be viewed as the total space of a canonical multi-parameter degeneration of the group $G$, as we recall next. To do so, consider the adjoint torus $T_{adj} = T/Z_G$ and recall that the collection of simple roots $(\alpha_i)_{i \in \CI}$ of $G$ yields a canonical identification
$$T_{adj} \ \stackrel{\cong}{\longto} \ \BG_m^r \, .$$
In other words, the simple roots form canonical affine coordinates on $T_{adj}$. Allowing these coordinates to vanish we obtain a
canonical semigroup completion $T_{adj}^+$ of $T_{adj}$ by defining
$$T_{adj}^+ \ := \ \BA^r \ \supset \ \BG_m^r \ = T_{adj} \, ,$$
where the structure of algebraic semigroup on $\BA^r$ is given by component-wise multiplication. The natural action of $T$ on $T_{adj}$ extends to an action of $T$ on $T_{adj}^+$.

\medskip

The semigroup $\Vin_G$ admits a natural flat semigroup homomorphism
$$v: \ \Vin_G \ \longto \ T_{adj}^+ = \BA^r$$
extending the natural projection map $G_{enh} \longto T_{adj}$. The map $v$ is $G \times G$-invariant and $T$-equivariant for the above $T$-actions on $\Vin_G$ and on $T_{adj}^+$. The fiber of the map $v$ over the point $1 \in T_{adj}^+$ is canonically identified with the group $G$. It is in this sense that the Vinberg semigroup is a multi-parameter degeneration of the group $G$. In Subsection \ref{The stratification parametrized by parabolics} below we will recall descriptions of all other fibers of the map $v$ in group-theoretic terms.

\medskip

\sssec{The canonical section}
\label{The canonical section}
Recall that we have fixed choices of a maximal torus and a Borel subgroup $T \subset B \subset G$. These choices give rise to a section
$$\Fs: \ T_{adj}^+ \ \longto \ \Vin_G$$
of the map
$$v: \ \Vin_G \ \longto \ T_{adj}^+ \, .$$
The section $\Fs$ is uniquely characterized as follows. Note first that the map
$$T \ \longto \ G \times T \, , \ \ t \ \longmapsto \ (t^{-1}, t)$$
descends to a map $T_{adj} \longto G_{enh}$, and that the latter map forms a section of the map $G_{enh} \longto T_{adj}$. One can then show that this section extends to the desired section $\Fs$ of the map $v$, and that the image under $\Fs$ of any point in $T_{adj}^+$ in fact lies in the open $G \times G$-orbit of the corresponding fiber of $v$.
This shows that the section $\Fs$ in fact factors through the \textit{non-degenerate locus} $\sideset{_0}{_G}\Vin$ of $\Vin_G$, which we recall next.

\sssec{The non-degenerate locus}
We now recall a natural dense open subvariety
$$\sideset{_0}{_G}\Vin \ \ \subset \ \ \Vin_G$$
which we will refer to as the \textit{non-degenerate locus} of $\Vin_G$. It is uniquely characterized by the fact that it meets each fiber of the map $v: \Vin_G \to T_{adj}^+$ in the open $G \times G$-orbit of that fiber; i.e., for any $t \in T_{adj}^+$ we have:
$$\Vin_G|_{t} \, \cap \sideset{_0}{_G}\Vin \ \ = \ \ G \cdot \Fs(t) \cdot G$$
For a Tannakian characterization of $\sideset{_0}{_G}\Vin$ we refer the reader to \cite[Sec. D4]{DrG2}. The open subvariety $\sideset{_0}{_G}\Vin$ of $\Vin_G$ is not only $G \times G$-stable but also $T$-stable, and the restriction of the map $v$ to $\sideset{_0}{_G}\Vin$ is smooth.

\sssec{The stratification parametrized by parabolics}
\label{The stratification parametrized by parabolics}
Consider the coordinate stratification of the completed adjoint torus $T_{adj}^+ = \BA^r$. Its strata are stable under the action of $T$ and are naturally indexed by subsets of the Dynkin diagram $\CI$ of $G$, or equivalently by standard parabolic subgroups of $G$:
$$T_{adj}^+ \ \ = \ \ \bigcup_{P} \ T^+_{adj, P} \ .$$
Each stratum $T^+_{adj, P}$ contains a canonical point $c_P$ which is defined as follows. Let $M$ denote the Levi quotient of the parabolic $P$, and let $\CI_M \subset \CI$ denote the subset of $\CI$ consisting of those vertices corresponding to $P$. Then in the coordinates $T_{adj}^+ = \BA^r$ we define $(c_P)_i = 1$ for $i \in \CI_M$ and $(c_P)_i = 0$ for $i \notin \CI_M$. Thus for example $c_G = 1 \in T_{adj}$ and $c_B = 0 \in T_{adj}^+$.

\medskip

Pulling back the stratification of $T_{adj}^+$ along the map $v$ we obtain a stratification of $\Vin_G$ indexed by standard parabolic subgroups of $G$:
$$\Vin_G \ \ = \ \ \bigcup_{P} \ \Vin_{G,P} \ .$$
Note that
$$\Vin_{G,G} \ = \ G_{enh} \ = \ (G \times T) / Z_G \ = \ G \times T_{adj}$$
as varieties over $T_{adj}$, where the last identification is induced by the map
$$(g,t) \ \mapsto \ (g t^{-1}, t) \, .$$
Below we recall the description of the strata $\Vin_{G,P}$ in terms of the group $G$. Note first that since the $T$-action on $T_{adj}^+$ is transitive when restricted to any of the strata $T_{adj, P}^+$, all fibers of the $T$-equivariant map $\Vin_{G,P} \to T_{adj,P}^+$ are isomorphic. In fact, using the section $\Fs$ from Subsection \ref{The canonical section} one obtains an action of $T_{adj}$ on $\Vin_G$ which by construction lifts the action of $T_{adj}$ on $T_{adj}^+$. This implies the following stronger assertion:

\medskip

\begin{remark}
The fiber bundle $\Vin_{G,P} \to T_{adj,P}^+$ is trivial.
\end{remark}

\medskip

We will thus confine ourselves to describing the fiber $\Vin_G|_{c_P}$ of $\Vin_G$ over the point $c_P \in T_{adj,P}^+$. To do so, recall first that a scheme Z over $k$ is called {\it strongly quasi-affine} if its ring of global functions $\Gamma(Z, \CO_Z)$ is a finitely generated $k$-algebra and if the natural map
$$Z \ \longto \ \overline{Z} \ := \ \Spec (\Gamma(Z, \CO_Z))$$
is an open immersion. If $Z$ is strongly quasi-affine we will call $\overline{Z}$ its {\it affine closure}. We first recall:

\medskip

\begin{lemma}
Let the Levi quotient $M$ of a parabolic $P$ act diagonally on the right on the product $G/U_P \times G/U_{P^-}$. Then the quotient
$$(G/U_P \times G/U_{P^-})/M$$
is strongly quasi-affine.
\end{lemma}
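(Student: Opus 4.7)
The plan is a geometric invariant theory argument: realize $(G/U_P \times G/U_{P^-})/M$ as the geometric quotient by the reductive group $M$ of an open subvariety of an affine variety, and show that it embeds as an open subvariety of the corresponding affine GIT quotient.

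First I would use the classical fact (a special case of Grosshans's theorem, since unipotent radicals of parabolic subgroups of a reductive group are Grosshans subgroups) that each parabolic basic affine space $G/U_P$ is strongly quasi-affine, with affine closure $\overline{G/U_P} := \Spec \Gamma(G/U_P, \CO)$. The same holds for $G/U_{P^-}$. Writing $Y := G/U_P \times G/U_{P^-}$ and $\overline{Y} := \overline{G/U_P} \times \overline{G/U_{P^-}}$, the product $Y$ is strongly quasi-affine with affine closure $\overline{Y}$, and the diagonal right $M$-action on $Y$ extends to $\overline{Y}$. Since $M$ is reductive and $\overline{Y}$ is affine, the GIT quotient $\overline{Y}/\!/M := \Spec \Gamma(\overline{Y}, \CO)^M$ is a finite-type affine variety.

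Next I would verify that the $M$-action on $Y$ is free: the stabilizer of $(gU_P, hU_{P^-})$ consists of $m \in M$ with $m \in U_P$, forcing $m \in M \cap U_P = \{1\}$. Hence $Y \to Y/M$ is a principal $M$-bundle and there is a canonical morphism $Y/M \to \overline{Y}/\!/M$. Principal-bundle descent gives $\Gamma(Y/M, \CO) = \Gamma(Y, \CO)^M = \Gamma(\overline{Y}, \CO)^M$, so the function ring of the quotient is automatically finitely generated. It then remains to show that $Y/M \to \overline{Y}/\!/M$ is an open immersion, which would conclude strong quasi-affineness. The standard strategy is to prove that every $M$-orbit through a point of $Y$ is closed in $\overline{Y}$; this, together with triviality of stabilizers, places $Y$ in the GIT-stable locus of $\overline{Y}$, whereupon standard GIT theory yields the desired open embedding of the geometric quotient into the affine GIT quotient.

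I expect the orbit-closedness assertion to be the main obstacle, and would address it via the Hilbert--Mumford criterion. If a one-parameter subgroup $\lambda: \BG_m \to M$ admits a limit as $t \to 0$ acting on some $(\alpha, \beta) \in Y$, then limits exist separately in $\overline{G/U_P}$ and $\overline{G/U_{P^-}}$. A weight-theoretic analysis of the $M$-action on each factor, using the explicit decompositions of the function rings of $G/U_P$ and $G/U_{P^-}$, forces $\lambda$ to pair non-negatively with a set of $M$-weights coming from the first factor and non-positively with a symmetric set on the second factor; together these conditions are incompatible unless $\lambda$ is trivial, giving the required closedness.
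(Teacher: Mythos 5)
The paper itself gives no proof of this lemma: it is stated as a recollection, with the surrounding facts about $\Vin_G|_{c_P}$ referred to the literature on the Vinberg semigroup (\cite{V}, \cite{DrG2}, \cite{W1}), so there is no internal argument to compare yours against; your GIT route is a legitimate self-contained proof, and it does go through. The skeleton is sound: $\overline{Y}:=\overline{G/U_P}\times\overline{G/U_{P^-}}$ is affine with $\Gamma(Y,\CO)=\Gamma(\overline{Y},\CO)$, the diagonal $M$-action on $Y$ is free (already the first factor gives $m\in M\cap U_P=\{1\}$), $\Gamma(\overline{Y},\CO)^M$ is finitely generated ($M$ reductive; in positive characteristic invoke Haboush--Nagata, and use \cite{BG1} for strong quasi-affineness of $G/U_P$ itself), and once all orbits through points of $Y$ are closed in $\overline{Y}$ with trivial stabilizers, $Y$ lies in the properly stable locus. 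One small point you should make explicit at the end: since fibres of $\overline{Y}\to\overline{Y}/\!/M$ over the image of the stable locus are single orbits, the invariant open subset $Y$ of the stable locus is saturated, so its image is open and $Y/M$ maps isomorphically onto it; combined with $\Gamma(Y/M,\CO)=\Gamma(\overline{Y},\CO)^M$ this is exactly strong quasi-affineness.

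The orbit-closedness step, which you rightly identify as the crux, works and is in fact cleaner than your sketch suggests. After conjugating a one-parameter subgroup $\lambda$ of $M$ into $T$ (harmless, since closedness of orbits is invariant under translating the point by $M$), take any dominant $\mu\in\Lambda_G^+$ and a highest weight vector $v_\mu$; the matrix coefficients $x\mapsto\langle\xi,xv_\mu\rangle$ are regular functions on $G/U_P$ of right $T$-weight $\mu$, and for $gU_P\in G/U_P$ they cannot all vanish because $gv_\mu\neq 0$. Hence if $\lim_{t\to 0}\,gU_P\cdot\lambda(t)$ exists in $\overline{G/U_P}$ then $\langle\mu,\lambda\rangle\geq 0$ for every dominant $\mu$, i.e.\ $\lambda$ lies in the cone spanned by the simple coroots $\check\alpha_i$. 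The same argument on $G/U_{P^-}$ with lowest weight vectors (which are $U_{P^-}$-fixed, of weight $w_0\mu$) gives $\langle w_0\mu,\lambda\rangle\geq 0$ for all dominant $\mu$, i.e.\ $\lambda$ lies in the cone spanned by the $w_0\check\alpha_i$, which is the negative of the previous cone; linear independence of the simple coroots then forces $\lambda=0$. (So the precise incompatibility is ``non-negative pairing with all dominant $\mu$ and with all $w_0\mu$,'' rather than the sign pattern you describe, but the upshot is exactly what you predicted, and notably it is independent of the chosen point of $Y$.) Kempf's Hilbert--Mumford theorem for closed orbits, valid in any characteristic, then gives closedness of every $M$-orbit through $Y$ in $\overline{Y}$, and your argument closes.
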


\medskip

Denoting by $\overline{(G/U_P \times G/U_{P^-})/M}$ the affine closure of $(G/U_P \times G/U_{P^-})/M$, we now recall:

\medskip

\begin{lemma}
\label{Vinberg strata}
The $G \times G$-action on the point $\Fs(c_P) \in \sideset{_0}{_G}\Vin|_{c_P}$ induces an isomorphism
$$(G/U_P \times G/U_{P^-})/M \ \ \ \stackrel{\cong}{\longto} \ \ \ \sideset{_0}{_G}\Vin|_{c_P} \, ,$$
which in turn induces an isomorphism
$$\overline{(G/U_P \times G/U_{P^-})/M} \ \ \ \stackrel{\cong}{\longto} \ \ \ \Vin_G|_{c_P}$$
on the affine closure.
In particular, the latter isomorphism is $G \times G$-equivariant for the natural $G \times G$-actions. Taking $P=B$ we find that
$$\Vin_{G,B} \ \ \ \cong \ \ \ \overline{(G/N \times G/N^-)/T} \, .$$
\end{lemma}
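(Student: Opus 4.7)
The plan is to proceed in three steps: first identify the stabilizer of the distinguished point $\Fs(c_P)$ in $G \times G$, which yields the orbit-theoretic isomorphism with $\sideset{_0}{_G}\Vin|_{c_P}$; second, observe that $\Vin_G|_{c_P}$ is affine so that the affine closure of the quotient maps canonically to it; and third, show this canonical map is an isomorphism.

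\textbf{Step 1: Stabilizer computation.} I would choose a one-parameter subgroup $\gamma \colon \BG_m \to T$ with $\alpha_i \circ \gamma \equiv 1$ for $i \in \CI_M$ and $\alpha_i \circ \gamma(s) = s^{n_i}$ with $n_i > 0$ for $i \notin \CI_M$, so that $\gamma(s) \to c_P$ in $T_{adj}^+$ as $s \to 0$ and hence $\Fs(\gamma(s)) \to \Fs(c_P)$. Using the formula $\Fs(t) = [(t^{-1}, t)] \in (G \times T)/Z_G$ on $T_{adj}$, a direct computation shows that for $s \neq 0$ the stabilizer of $\Fs(\gamma(s))$ in $G \times G$ is the twisted diagonal $\{(\gamma(s)^{-1} g \gamma(s), g) : g \in G\}$. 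I would then compute the scheme-theoretic limit of these subgroup schemes as $s \to 0$ via the root-space decomposition: conjugation by $\gamma(s)^{-1}$ fixes $M$ pointwise, contracts the negative unipotent $U_{P^-}$ to the identity, and expands the positive unipotent $U_P$. For a family $g(s) = u_+(s)\,m\,u_-$ with $u_+(s) \to 1$ chosen so that $\gamma(s)^{-1} u_+(s) \gamma(s)$ converges to a prescribed element $u_+ \in U_P$, the pair $(\gamma(s)^{-1} g(s) \gamma(s),\, g(s))$ converges to $(u_+ m,\, m u_-)$. Rewriting $m u_- = (m u_- m^{-1}) m$, the limit subgroup is
$$H \ := \ \bigl\{(u_+ m,\, u_- m) : u_+ \in U_P,\ u_- \in U_{P^-},\ m \in M\bigr\} \ \subset \ G \times G\,.$$
Since $\dim H = \dim U_P + \dim M + \dim U_{P^-} = \dim G$ matches $2\dim G - \dim \sideset{_0}{_G}\Vin|_{c_P}$, and $H$ is a connected closed subgroup contained in the stabilizer of $\Fs(c_P)$, it must be the full stabilizer. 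The orbit map $(g_1, g_2) \mapsto (g_1, g_2) \cdot \Fs(c_P)$ therefore descends to a bijective morphism
$$(G/U_P \times G/U_{P^-})/M \ \isoto \ \sideset{_0}{_G}\Vin|_{c_P}\,,$$
with $M$ acting diagonally on the right; both sides are smooth (the target by smoothness of $v$ on the non-degenerate locus), so this bijective map is an isomorphism.

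\textbf{Step 2: Extension to the affine closure.} By the preceding lemma, $(G/U_P \times G/U_{P^-})/M$ is strongly quasi-affine. Composing the isomorphism of Step 1 with the open immersion $\sideset{_0}{_G}\Vin|_{c_P} \into \Vin_G|_{c_P}$ gives an open immersion into $\Vin_G|_{c_P}$, which is itself affine (being the scheme-theoretic fiber over $c_P$ of a morphism from the affine scheme $\Vin_G$). The universal property of the affine closure then produces a unique $G \times G$-equivariant extension
$$\overline{(G/U_P \times G/U_{P^-})/M} \ \longto \ \Vin_G|_{c_P}\,.$$

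\textbf{Step 3: The extension is an isomorphism, and the main obstacle.} This is the delicate point. Since both sides are affine and the map is birational, it suffices to prove that restriction induces an isomorphism of global functions. I would pursue two routes. The preferred route uses the Tannakian description of \ref{The definition of Vin_G via classification of reductive monoids}: the coordinate ring of $\Vin_G|_{c_P}$ decomposes as a $G \times G$-module into matrix coefficients of those representations $V \in \Rep(G_{enh})$ extending to $\Vin_G$ whose $T$-weights are compatible with the vanishing pattern of $c_P$. The highest-weight constraint defining $\Rep(\Vin_G)$ forces each contribution to factor through the Levi restriction along $P \to M$ and $P^- \to M$, reproducing exactly the Peter-Weyl-type decomposition of $\Gamma((G/U_P \times G/U_{P^-})/M, \CO)$. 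A backup route uses the structure theory of reductive monoids \cite{Pu, Re, Ri1, Ri2, Ri3, Ri4}: the fiber $\Vin_G|_{c_P}$ is normal and the complement of the open $G \times G$-orbit has codimension at least two, so a birational morphism of affine normal varieties that is an isomorphism in codimension one is an isomorphism. The final assertion about $P = B$ is then an immediate specialization, since $U_B = N$, $U_{B^-} = N^-$, and $M = T$.
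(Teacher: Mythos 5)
Your proposal should first be measured against what the paper actually does: the paper does not prove this lemma at all, but recalls it from the structure theory of the Vinberg semigroup (\cite{V}, \cite{Ri1}--\cite{Ri4}, \cite{BKu}, \cite{DrG2}, \cite{W1}). So the question is whether your sketch is a complete argument, and there it has genuine gaps. In Step 1, the limit-of-stabilizers trick only yields the containment $H \subseteq \Stab_{G\times G}(\Fs(c_P))$; equality of dimensions together with connectedness of $H$ identifies $H$ only with the identity component of the reduced stabilizer, and nothing in your argument rules out extra components (the standard remedy is to compute the stabilizer of the idempotent $e_P=\Fs(c_P)$ directly, e.g.\ on the Weyl modules $V^\lambda$, which also gives connectedness). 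More seriously, the paper works over $k=\overline{\BF}_q$, and in characteristic $p$ a bijective morphism of smooth varieties need not be an isomorphism (Frobenius is the basic counterexample); to conclude that the orbit map induces $(G/U_P \times G/U_{P^-})/M \isoto \sideset{_0}{_G}\Vin|_{c_P}$ one needs the scheme-theoretic stabilizer to be smooth, i.e.\ separability of the orbit map. This is exactly the nontrivial input the paper itself invokes from \cite[D.4.6]{DrG2} when proving smoothness of the defect-free locus, and your "both sides are smooth, so bijective implies isomorphism" does not substitute for it.

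Step 2 is fine, but Step 3 -- which you correctly identify as the crux -- is asserted rather than proved in both routes. Route A states that the highest-weight condition defining $\Rep(\Vin_G)$ "forces each contribution to factor through the Levi restriction, reproducing exactly the Peter--Weyl-type decomposition" of $\Gamma((G/U_P\times G/U_{P^-})/M,\CO)$; that sentence is essentially the content of the lemma, and carrying it out requires actually computing the specialization $k[\Vin_G]\otimes_{k[T_{adj}^+]}k_{c_P}$ and matching it with $\bigl(\Gamma(G/U_P,\CO)\otimes\Gamma(G/U_{P^-},\CO)\bigr)^M$, including an argument for why no nilpotents or extra functions appear in the special fiber. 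Route B rests on unproved structural facts about the special fiber: normality of $\Vin_G$ does not imply normality (or irreducibility) of $\Vin_G|_{c_P}$, and the claim that the complement of the open $G\times G$-orbit in $\Vin_G|_{c_P}$ has codimension at least two is itself a nontrivial theorem of the theory, not something you may quote at the same level of generality as the statement being proved. Until one of these routes is executed -- or the result is simply cited, as the paper does -- the surjectivity of restriction $k[\Vin_G|_{c_P}]\to k[\sideset{_0}{_G}\Vin|_{c_P}]$, and hence the isomorphism on affine closures, remains unestablished.
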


\bigskip

While it will be essential for us to consider the entire Vinberg semigroup $\Vin_G$, we remark that the non-degenerate locus $\sideset{_0}{_G}\Vin$ is closely related to the wonderful compactification of De Concini and Procesi: Let $G_{adj} = G/Z_G$ denote the adjoint group of $G$, and let $\overline{G_{adj}}^{DCP}$ denote its wonderful compactification (see \cite{DCP}, \cite{BKu}). Then we have:

\medskip

\begin{remark}
The $T$-action on $\sideset{_0}{_G}\Vin$ is free and induces an isomorphism
$$\sideset{_0}{_G}\Vin/T \ \ \cong \ \ \overline{G_{adj}}^{DCP} \, .$$
\end{remark}

\medskip

\sssec{The Vinberg semigroup for $G = \SL_2$}
\label{The Vinberg semigroup for G = SL_2}

As an illustration we now discuss the above notions in the case $G=\SL_2$; this case has implicitly been used in the work \cite{Sch1}, which was concerned with the study of the Drinfeld-Lafforgue-Vinberg degeneration in the case $G=\SL_2$. For $G = \SL_2$ the Vinberg semigroup is equal to the semigroup of $2 \times 2$ matrices~$\Mat_{2 \times 2}$. The $\SL_2 \times \SL_2$-action is given by left and right multiplication, and the action of $T = \BG_m$ by scalar multiplication. The semigroup homomorphism $v$ is equal to the determinant map
$$v: \ \Vin_G = \Mat_{2 \times 2} \ \stackrel{\det}{\longto} \ \BA^1 = T_{adj}^+ \, .$$
The canonical section $\Fs$ takes the form
$$\Fs: \ \BA^1 \ \longto \ \Mat_{2 \times 2} \, , \ \ c \ \longmapsto \bigl( \begin{smallmatrix} 1 & 0 \\ 0 & c \end{smallmatrix} \bigr) \, .$$
For $G = \SL_2$ the Vinberg semigroup possesses only two strata: The $G$-locus
$$\Vin_{G,G} \ = \ v^{-1}(\BA^1 \setminus \{ 0\}) \ \cong \ \GL_2 \, ,$$
and the $B$-locus
$\Vin_{G,B} = v^{-1}(0)$ consisting of all singular $2 \times 2$ matrices. Finally, the non-degenerate locus $\sideset{_0}{_G}\Vin \subset \Vin_G$
is equal to the subset
$$\Mat_{2 \times 2} \setminus \{0\} \ \ \subset \ \ \Mat_{2 \times 2}$$
of non-zero matrices.

\bigskip

\ssec{Definition of $\VinBun_G$ and of $\barBun_G$}

We now give the definition of the compactification $\barBun_G$ for an arbitrary reductive group $G$; this definition is due to Drinfeld (unpublished). In fact, we first give the definition of the degeneration $\VinBun_G$, and then define $\barBun_G$ as a torus quotient of $\VinBun_G$.

\medskip

\sssec{Notation}
Let $G$ be a reductive group over $k$ and let $X$ be a smooth projective curve over $k$.
Recall that for a stack $\CY$ the sheaf of groupoids $\Maps(X,\CY)$ parametrizing maps from the curve $X$ to the stack $\CY$ is defined as
$$\Maps(X, \CY)(S) \ = \ \CY(X \times S) \, .$$
Thus for example we have $\Bun_G = \Maps(X, \cdot/G)$.
Similarly, given an open substack $\overset{\circ}{\CY} \subset \CY$, the sheaf of groupoids
$$\Maps_{gen}(X, \CY \supset \overset{\circ}{\CY})$$
associates to a scheme $S$ the full sub-groupoid of $\Maps (X, \CY)(S)$ consisting of those maps $X \times S \to \CY$ satisfying the following condition: We require that for every geometric point $\bar s \to S$ there exists an open dense subset of $X \times \bar{s}$ on which the restricted map $X \times \bar s \to \CY$ factors through the open substack~$\overset{\circ}{\CY} \subset \CY$. 

\medskip

\sssec{Definition of $\VinBun_G$}
Quotienting out by the $G \times G$-action on $\Vin_G$ we obtain an open substack
$$\sideset{_0}{_G}\Vin / G \times G \ \ \ \subset \ \ \ \Vin_G / G \times G \, .$$
We then define the \textit{Drinfeld-Lafforgue-Vinberg degeneration} $\VinBun_G$ for an arbitrary reductive group $G$ as
$$\VinBun_G \ \ := \ \ \Maps_{gen} \, (X, \ \text{Vin}_G / G \times G \ \supset \ \sideset{_0}{_G}\Vin / G \times G) \, .$$
Since the curve $X$ is proper, the map $v: \Vin_G \longto T_{adj}^+$ induces a map
$$v: \ \VinBun_G \ \longto \ T_{adj}^+  = \BA^r \, .$$
The map $v$ makes $\VinBun_G$ into a multi-parameter degeneration of $\Bun_G$ in the sense that any fiber of the map $v$ over a point in $T_{adj} \subset T_{adj}^+$ is isomorphic to $\Bun_G$.

\medskip

\sssec{The space $\VinBun_G$ for $G = \SL_2$}
Using the description of $\Vin_G$ for $G=\SL_2$ in Subsection \ref{The Vinberg semigroup for G = SL_2} above one recovers the concrete definition of $\VinBun_G$ for $G=\SL_2$ given in \cite{Sch1}: For $G=\SL_2$ an $S$-point of $\VinBun_G$ consists of the data of two vector bundles $E_1$, $E_2$ of rank $2$ on $X \times S$, together with trivializations of their determinant line bundles $\det E_1$ and $\det E_2$, and a map of coherent sheaves
$$\varphi: \ E_1 \ \longto \ E_2 \, ,$$
satisfying the condition that for each geometric point $\bar{s} \to S$ the map
$$\varphi|_{X \times \bar{s}} : \ \ E_1|_{X \times \bar{s}} \ \longto \ E_2|_{X \times \bar{s}}$$
is not the zero map; in other words, the map $\varphi|_{X \times \bar{s}}$ is required to not vanish generically on the curve $X \times \bar{s}$. The map $v: \VinBun_G \longto \BA^1$ is obtained by sending the above data to the point $\det(\varphi) \in \BA^1(S)$.

\medskip

\sssec{Definition of $\barBun_G$}
\label{Definition of barBun_G}

Since the action of $T$ on $\Vin_G$ commutes with the $G \times G$-action, it induces an action of $T$ on $\VinBun_G$; by construction the map $v: \VinBun_G \to T_{adj}^+$ is $T$-equivariant. We then define $\barBun_G$ as the quotient by this action:
$$\barBun_G \ \ := \ \ \VinBun_G/T$$
In other words, we define $\barBun_G$ as the fiber product
$$\xymatrix@+10pt{
\barBun_G \ar[rr] \ar[d] & & \Maps_{gen} \, (X, \ \text{Vin}_G / G \times G \times T \, \supset \, \sideset{_0}{_G}\Vin / G \times G \times T) \ar[d]  \\
T_{adj}^+/T \ar[rr] & & \Maps(X, T_{adj}^+/T) \\
}$$
where the bottom map assigns to a point the corresponding constant map, and the right map is induced by the map $\Vin_G \to T_{adj}^+$. Hence the spaces $\barBun_G$ and $\VinBun_G$ fit into a cartesian square
$$\xymatrix@+10pt{
\VinBun_G \ar[r] \ar^{v}[d] & \barBun_G \ar^{\bar{v}}[d]  \\
T_{adj}^+ \ar[r] & T_{adj}^+/T \\
}$$
where the horizontal arrows are $T$-bundles. In particular, the study of the singularities of the space $\barBun_G$ and the map $\bar{v}$ are equivalent to the study of the singularities of the space $\VinBun_G$ and the map $v$. Thus for the remainder of the article we will be mainly concerned with the degeneration $\VinBun_G$.

\medskip

\sssec{The $T_{adj}$-action on $\VinBun_G$}
\label{The T_adj-action on VinBun_G}
By its definition as a reductive monoid with unit group $G_{enh}$, the Vinberg semigroup $\Vin_G$ carries a natural $G_{enh} \times G_{enh}$-action. Since $G \times G$ forms a normal subgroup in $G_{enh} \times G_{enh}$ with quotient $T_{adj} \times T_{adj}$, the quotient $\Vin_G / G \times G$ carries a natural $T_{adj} \times T_{adj}$-action. By construction the $T_{adj}$-action of each of the two factors individually makes the map
$$\Vin_G / G \times G \ \longto \ T_{adj}^+$$
equivariant with respect to the natural $T_{adj}$-action on $T_{adj}^+$. In particular we record:

\begin{remark}
\label{T_adj-action on VinBun_G}
The stack $\VinBun_G$ carries a natural $T_{adj} \times T_{adj}$-action such that the $T_{adj}$-action of either of the two factors makes the map
$$\VinBun_G \ \longto \ T_{adj}^+$$
equivariant with respect to the natural $T_{adj}$-action on $T_{adj}^+$.
\end{remark}

\sssec{Stratification by parabolics}
\label{Stratification by parabolics}
The stratification of $T_{adj}^+$ indexed by parabolic subgroups $P$ of $G$ induces, via pullback along the map $v$, a stratification
$$\VinBun_G \ \ = \ \ \bigcup_{P} \ \VinBun_{G,P} \, ,$$
and similarly for $\barBun_G$.

\medskip

We will introduce stratifications of the loci $\VinBun_{G,P}$ in Section \ref{The defect stratification} below. As for $\Vin_G$ we note that since the $T$-action on $T_{adj,P}^+$ is transitive, the fibers of the map $\VinBun_{G,P} \to T_{adj,P}^+$ are all isomorphic, and we may restrict our attention to the fiber $\VinBun_G|_{c_P}$. In fact, Remark \ref{T_adj-action on VinBun_G} implies that the following stronger assertion holds:

\medskip

\begin{lemma}
The fiber bundle $\VinBun_{G,P} \to T_{adj,P}^+$ is trivial.
\end{lemma}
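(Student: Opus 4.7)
The plan is to mimic, on the level of $\VinBun_G$, the trivialization argument already used in the excerpt for $\Vin_G \to T_{adj}^+$: there, the canonical section $\Fs$ produces a $T_{adj}$-action on $\Vin_G$ lifting the multiplication action on $T_{adj}^+$, and this is used to trivialize each fiber bundle $\Vin_{G,P} \to T_{adj,P}^+$. Here, in place of $\Fs$, I would use the $T_{adj} \times T_{adj}$-action on $\VinBun_G$ supplied by Remark \ref{T_adj-action on VinBun_G}; either of the two factors will do, since by construction the corresponding $T_{adj}$-action makes $v \colon \VinBun_G \to T_{adj}^+$ equivariant for the component-wise multiplication action of $T_{adj} = \BG_m^r$ on $T_{adj}^+ = \BA^r$.

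First I would analyze the base. In coordinates the stratum $T_{adj,P}^+$ consists of those $(x_i)_{i \in \CI}$ with $x_i \in \BG_m$ for $i \in \CI_M$ and $x_i = 0$ for $i \notin \CI_M$. The $T_{adj}$-action on $T_{adj,P}^+$ is transitive, and the stabilizer of $c_P$ is the subtorus $S_P := \{(t_i) \in T_{adj} \mid t_i = 1 \text{ for all } i \in \CI_M\}$. The complementary subtorus $T_{adj}^{\CI_M} := \{(t_i) \in T_{adj} \mid t_i = 1 \text{ for all } i \notin \CI_M\}$ gives a direct product decomposition $T_{adj} = T_{adj}^{\CI_M} \times S_P$, and the orbit map restricts to an isomorphism $\sigma \colon T_{adj,P}^+ \stackrel{\cong}{\to} T_{adj}^{\CI_M} \hookrightarrow T_{adj}$ which is a canonical section of $T_{adj} \twoheadrightarrow T_{adj,P}^+$, $t \mapsto t \cdot c_P$.

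Using this section and the fixed $T_{adj}$-action $a \colon T_{adj} \times \VinBun_G \to \VinBun_G$, I would define the trivialization
$$\Phi \ \colon \ T_{adj,P}^+ \times \VinBun_G|_{c_P} \ \longto \ \VinBun_{G,P} \, , \qquad (t, x) \ \longmapsto \ a(\sigma(t), x) \, ,$$
and its candidate inverse
$$\Psi \ \colon \ \VinBun_{G,P} \ \longto \ T_{adj,P}^+ \times \VinBun_G|_{c_P} \, , \qquad y \ \longmapsto \ \bigl(v(y), \, a(\sigma(v(y))^{-1}, y) \bigr).$$
Equivariance of $v$ ensures that $v \circ \Phi$ is the projection onto $T_{adj,P}^+$ and that the second component of $\Psi$ indeed lands in $\VinBun_G|_{c_P}$; the identities $\Phi \circ \Psi = \id$ and $\Psi \circ \Phi = \id$ are then immediate from the group action axioms on $a$. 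There is no substantive obstacle: the entire statement is a formal consequence of Remark \ref{T_adj-action on VinBun_G} together with the splitting of tori $T_{adj} = T_{adj}^{\CI_M} \times S_P$, and the argument is completely parallel to the one for $\Vin_{G,P} \to T_{adj,P}^+$ noted earlier.
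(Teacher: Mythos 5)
Your proposal is correct and is essentially the paper's own argument: the paper likewise invokes the $T_{adj}$-action of Remark \ref{T_adj-action on VinBun_G} and the fact that the subtorus $\prod_{i \in \CI_M}\BG_m \subset T_{adj}$ acts simply transitively on $T_{adj,P}^+$, lifting this action to $\VinBun_{G,P}$ to trivialize the bundle. You have merely spelled out the explicit trivialization and its inverse, which the paper leaves implicit.
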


\begin{proof}
The subgroup
$$\prod_{i \in \CI_M} \BG_m \ \ \longinto \ \ \prod_{i \in \CI} \BG_m \ = \ T_{adj}$$
acts simply transitively on $T_{adj,P}^+$; lifting this action to $\VinBun_{G,P}$ thus trivializes this fiber bundle.
\end{proof}

\bigskip

\ssec{Compactification of the diagonal}
By Subsection \ref{Stratification by parabolics} above the space $\barBun_G$ contains the open substack
$$\Bun_G \, \times \, \cdot/Z_G \ = \ \barBun_{G,G} \ \ \longinto \ \ \barBun_G \, ;$$
In particular we obtain a natural map
$$b: \ \Bun_G \ \longto \ \barBun_G$$
which forms a $Z_G$-bundle over its image $\barBun_{G,G}$ in $\barBun_G$. Furthermore, by construction the space $\barBun_G$ admits a natural forgetful map
$$\bar\Delta: \ \barBun_G \ \longto \ \Bun_G \times \Bun_G \, ,$$
yielding a factorization of the diagonal morphism $\Delta$ of $\Bun_G$ as
$$\xymatrix@+10pt{
\Bun_G \ar^{b}[r] \ar@/_-2pc/[rr]^{\Delta} & \barBun_G \ar^{\bar\Delta \ \ \ \ \ }[r] & \Bun_G \times \Bun_G \, .
}$$

\medskip

The space $\barBun_G$ is a compactification of $\Bun_G$ in the sense of the following remark, which we will neither use nor prove in the present article:

\begin{remark}
The map $\bar\Delta$ is proper.
\end{remark}

\medskip

\ssec{The defect-free locus}

We define the \textit{defect-free locus} of $\VinBun_G$ to be the open substack
$$\sideset{_0}{_G}\VinBun \ \ := \ \ \Maps(X, \sideset{_0}{_G}\Vin / G \times G) \, .$$
Lemma \ref{Vinberg strata} above implies that
$$\sideset{_0}{_{G}}\VinBun |_{c_P} \ \ = \ \ \Bun_{P^-} \underset{\Bun_M}{\times} \Bun_P \, .$$

Furthermore we have:

\medskip

\begin{proposition}
The restriction of the map $v$ to the defect-free locus
$$v: \ \sideset{_0}{_G}\VinBun \ \ \longto \ \ T_{adj}^+$$
is smooth. In particular, the defect-free locus $\sideset{_0}{_G}\VinBun$ itself is smooth.
\end{proposition}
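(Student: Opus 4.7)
The plan is to prove smoothness of $v$ in two steps: first descend the known smoothness of $v: \sideset{_0}{_G}\Vin \to T_{adj}^+$ to the quotient stack $\sideset{_0}{_G}\Vin/(G \times G)$, and then pass from the quotient stack to its mapping stack along $X$.

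For the first step, recall that $v$ is smooth and $G \times G$-invariant, hence factors as $v = \bar{v} \circ p$, where $p: \sideset{_0}{_G}\Vin \to \sideset{_0}{_G}\Vin/(G \times G)$ is the natural smooth surjection from the scheme atlas. Since smoothness of a morphism of algebraic stacks can be checked after pulling back along a smooth surjection from a scheme, the induced map $\bar{v}: \sideset{_0}{_G}\Vin/(G \times G) \longto T_{adj}^+$ is smooth.

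For the second step, note first that since $X$ is proper and geometrically connected and $T_{adj}^+$ is affine, one has $\Maps(X, T_{adj}^+) = T_{adj}^+$, so that the map $v: \sideset{_0}{_G}\VinBun \to T_{adj}^+$ is identified with the map of mapping stacks induced by $\bar{v}$. I would verify the infinitesimal lifting criterion: given a square-zero extension $S \into S'$ of affine test schemes with ideal $\CI$ and a point $\phi: X \times S \to \sideset{_0}{_G}\Vin/(G \times G)$ fitting into the obvious diagram over $T_{adj}^+$, one must produce a lift $\tilde\phi: X \times S' \to \sideset{_0}{_G}\Vin/(G \times G)$. By standard deformation theory the obstruction lies in
$$\BH^1\bigl(X \times S, \ \phi^* T_{\bar{v}} \otimes \pi_S^* \CI \bigr) \, ,$$
where $T_{\bar{v}}$ denotes the relative tangent complex of $\bar{v}$ and $\pi_S: X \times S \to S$ is the projection.

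The decisive point is that $T_{\bar{v}}$ sits in cohomological degree $-1$. Indeed, by the very definition of the non-degenerate locus, $G \times G$ acts transitively on each fiber of $v$, so the infinitesimal action map $\Fg \oplus \Fg \to T_v$ on $\sideset{_0}{_G}\Vin$ is surjective with kernel the bundle $\Fz \subset \Fg \oplus \Fg$ of stabilizer Lie algebras. Consequently $p^* T_{\bar{v}}$, being the derived cofiber of this action map, is quasi-isomorphic to $\Fz[1]$; by smooth descent along $p$ the same holds for $T_{\bar{v}}$ itself. It follows that $\phi^* T_{\bar{v}}$ is a vector bundle placed in degree $-1$, and the obstruction group reduces to $H^2(X \times S, \phi^*\Fz \otimes \pi_S^* \CI)$. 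Since $S$ is affine and $X$ is a proper smooth curve, the Leray spectral sequence for $\pi_S$ together with the vanishing of $R^{\geq 2} \pi_{S*}$ (from $\dim X = 1$) and of higher quasi-coherent cohomology on the affine $S$ forces this group to vanish. Hence the lift $\tilde\phi$ exists, which establishes the smoothness of $v$; the ``in particular'' assertion then follows since $T_{adj}^+ = \BA^r$ is smooth over $k$.

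The main obstacle in the plan is the tangent-complex calculation in the second step. One has to exploit the key structural property of the non-degenerate locus---transitivity of the $G \times G$-action on fibers of $v$---together with the Artin stack nature of the quotient to force $T_{\bar{v}}$ into degree $-1$; it is precisely this shift that makes the obstructions on the one-dimensional curve $X$ vanish and thereby allows smoothness to pass to the mapping stack.
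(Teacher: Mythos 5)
Your overall strategy is the same as the paper's: the paper's proof (by reference to \cite[Prop. 2.2.3]{Sch1}) is exactly a mapping-stack deformation argument of the kind you outline, and your reduction to the quotient stack, the identification $\Maps(X,T_{adj}^+)=T_{adj}^+$, and the vanishing of the obstruction group once $T_{\bar v}$ is known to be a shifted vector bundle are all fine. The gap is in the ``decisive point''. You deduce that the infinitesimal action map $\Fg\oplus\Fg\to T_v$ is surjective, with kernel a vector bundle $\Fz$ of stabilizer Lie algebras, from the mere transitivity of the $G\times G$-action on the fibers of $v:\sideset{_0}{_G}\Vin\to T_{adj}^+$. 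This inference is valid in characteristic $0$ (where all group schemes, in particular the stabilizers, are automatically smooth), but the paper works over the algebraic closure of a finite field (see the conventions in Subsection \ref{Conventions and notation}), and in characteristic $p$ transitivity on geometric points does not imply that the orbit maps are smooth: the stabilizer group schemes may be non-reduced, in which case the differential of the action map fails to be surjective and the ranks of the stabilizer Lie algebras may jump. (Already $\BG_m$ acting on $\BG_m$ through the $p$-th power character is transitive with identically zero infinitesimal action.) In that situation $p^*T_{\bar v}$ is not quasi-isomorphic to $\Fz[1]$ --- it acquires a degree-$0$ piece and $\Fz$ need not be a bundle --- and your obstruction group no longer reduces to an $H^2$ on a relative curve, so the lifting argument breaks down exactly at its key step.

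What is needed, and what the paper flags as the only group-theoretic input, is the fact that the stabilizers of the $G\times G$-action on the fibers of $\sideset{_0}{_G}\Vin\to T_{adj}^+$ are \emph{smooth} group schemes; for arbitrary reductive $G$ this is established in \cite[D.4.6]{DrG2}. Granting that, the orbit maps are smooth, the infinitesimal action map is surjective, its kernel is the Lie algebra bundle of the smooth stabilizers, and your computation of $T_{\bar v}$ together with the vanishing of the obstruction group goes through and reproduces the paper's proof. So the argument is reparable, but as written it silently assumes the one nontrivial characteristic-$p$ fact on which the proposition rests.
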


\begin{proof}
The proof given in the case $G = \SL_2$ in \cite[Proposition 2.2.3]{Sch1} carries over without change. Indeed, the proof in \cite{Sch1} is given in the language of mapping stacks, and the only group-theoretic input in the proof is the fact that the stabilizers of the $G \times G$-action on the fibers of the map $\sideset{_0}{_G}\Vin \to T_{adj}^+$ are smooth; for an arbitrary reductive group $G$, this is established in \cite[D.4.6]{DrG2}. The proof from \cite{Sch1} then applies verbatim.
\end{proof}

\bigskip
\bigskip

\section{The defect stratification}
\label{The defect stratification}

\ssec{Recollections}

In this section we construct natural stratifications of the loci $\VinBun_{G,P}$. To do so, we first recall:

\sssec{The monoid $\overline{M}$}
\label{The monoid barM}

Let $P$ be a parabolic of $G$ and let $M$ be its Levi quotient. We now recall the definition of a certain reductive monoid $\overline{M}$ containing $M$ as a dense open subgroup; the definition of $\overline{M}$ depends on the realization of $M$ as a Levi of $G$. We refer the reader to \cite{BG1} and \cite{W1} for proofs and additional background.

\medskip

As before let $U_P$ denote the unipotent radical of $P$. Recall from e.g. \cite{BG1} that the quotient $G/U_P$ is strongly quasi-affine; we denote by $\overline{G/U_P}$ its affine closure. We then define $\overline{M}$ as the closure of $M$ inside $\overline{G/U_P}$ under the embedding
$$M = P/U_P \ \longinto \ G/U_P \ \subset \ \overline{G/U_P} \, .$$
The $M$-actions from the left and from the right on $G/U(P)$ induce $M$-actions from the left and from the right on $\overline{M}$, which in turn extend to $\overline{M}$-actions; thus $\overline{M}$ forms an algebraic monoid containing the group $M$. Alternatively, one can also define $\overline{M}$ as follows: Consider not the tautological embedding of $M = P^-/U_{P^-}$ into $G/U_{P^-}$ but rather the embedding given by the inverse:
$$M \ \longinto \ G/U_{P^-} \, , \ \ \ m \ \longmapsto \ m^{-1}$$
Using this embedding, one can then also define $\overline{M}$ as the closure of $M$ inside $\overline{G/U_{P^-}}$.

\medskip

\sssec{Embedding of $\overline{M}$ into $\Vin_G$}
\label{Embedding of barM into Vin_G}
Next recall that the embeddings of the first factor
$$G/U_P \ \longinto \ (G/U_P \times G/U_{P^-})/M$$
and the second factor
$$G/U_{P^-} \ \longinto \ (G/U_P \times G/U_{P^-})/M$$
extend to closed immersions
$$\overline{G/U_P} \ \ \longinto \ \ \overline{(G/U_P \times G/U_{P^-})/M}$$
and
$$\overline{G/U_{P^-}} \ \ \longinto \ \ \overline{(G/U_P \times G/U_{P^-})/M} \, .$$
Consider the two closed embeddings
$$\overline{M} \ \ \longinto \ \ \overline{(G/U_P \times G/U_{P^-})/M} \ = \ \Vin_G|_{c_P}$$
of $\overline{M}$ obtained by composing the previous embeddings with the embeddings of $\overline{M}$ into $\overline{G/U_P}$ and $\overline{G/U_{P^-}}$ from \ref{The monoid barM} above. Then one can show that these two embeddings of $\overline{M}$ into $\Vin_G|_{c_P}$ agree. Furthermore, this embedding is $M \times M$-equivariant for the natural $M \times M$-action on $\overline{M}$ and the $M \times M$-action on $\Vin_G|_{c_P}$ obtained by restricting the $G \times G$-action to the Levi subgroup $M \times M$.

\medskip

One can show (see \cite{W1}):

\begin{lemma}
\label{bar M lemma}
\begin{itemize}
\item[]
\item[(a)] The variety $\overline{M}$ is normal.
\item[(b)] The composition
$$\ \ \ \ \ \overline{M} \longinto \overline{G/U_P} = \Spec \bigl( \Gamma(G, \CO_G)^{U_P} \bigr) \longto \Spec \bigl( \Gamma(G, \CO_G)^{U_P \times U_{P^-}} \bigr)$$
is an isomorphism respecting the natural $M \times M$-actions.
\item[(c)] The composition
$$\ \ \ \ \ \overline{M} \ \longinto \ \Vin_G|_{c_P} \ \longto \ \Spec \bigl( \Gamma(\Vin_G|_{c_P}, \CO_{\Vin_G|_{c_P}})^{U_P \times U_{P^-}} \bigr)$$
is an isomorphism respecting the natural $M \times M$-actions.
\end{itemize}
\end{lemma}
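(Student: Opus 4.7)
The plan is to prove (b) first via a direct Peter--Weyl computation, then deduce (c) by an entirely parallel argument, and finally handle the normality assertion in (a) by appealing to the classification of reductive monoids.

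For (b), I would start from the $G\times G$-equivariant Peter--Weyl decomposition
$$\Gamma(G,\CO_G) \ = \ \bigoplus_{\lambda\in\Lambda_G^+} V_\lambda^* \otimes V_\lambda \, ,$$
in which the left $G$-action acts on the first tensor factor and the right $G$-action on the second. Taking right $U_P$-invariants replaces $V_\lambda$ by $V_\lambda^{U_P}$; by standard highest-weight theory this is the irreducible $M$-representation of highest weight $\lambda$, which I denote by $V_\lambda^M$. Additionally taking left $U_{P^-}$-invariants replaces $V_\lambda^*$ by $(V_\lambda^*)^{U_{P^-}}$, which as an $M$-representation is canonically dual to $V_\lambda^M$. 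This yields $M\times M$-equivariant decompositions
$$\Gamma(G)^{U_P} \ = \ \bigoplus_\lambda V_\lambda^* \otimes V_\lambda^M \, , \qquad \Gamma(G)^{U_P\times U_{P^-}} \ = \ \bigoplus_\lambda (V_\lambda^M)^* \otimes V_\lambda^M \, ,$$
and the tautological inclusion of the latter into the former is the sum of the inclusions $(V_\lambda^M)^* \hookrightarrow V_\lambda^*$ on the first tensor factor. On the other hand, the restriction of functions along $M \hookrightarrow G/U_P$, $m\mapsto mU_P$, sends a matrix coefficient $\xi\otimes v \in V_\lambda^*\otimes V_\lambda^M$ to the function $m\mapsto \langle \xi, mv\rangle$; since $v\in V_\lambda^M$ and $M$ preserves $V_\lambda^M \subset V_\lambda$, this value depends only on $\xi|_{V_\lambda^M}$. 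Hence the restriction map factors as the projection $V_\lambda^* \twoheadrightarrow (V_\lambda^M)^*$ on the first factor, and its image is precisely $\bigoplus_\lambda (V_\lambda^M)^* \otimes V_\lambda^M = \Gamma(\overline{M})$. The composition appearing in (b) is then the identity on each summand, yielding the desired $M\times M$-equivariant isomorphism.

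For (c), the argument is entirely parallel. Using Lemma \ref{Vinberg strata} we have $\Vin_G|_{c_P} = \overline{(G/U_P \times G/U_{P^-})/M}$, so
$$\Gamma(\Vin_G|_{c_P}) \ = \ \bigl(\Gamma(G/U_P) \otimes \Gamma(G/U_{P^-})\bigr)^{M_{\mathrm{diag}}} \, .$$
A double application of Peter--Weyl combined with Schur's lemma (pairing each $V_\lambda^M$-isotypic component on the $G/U_P$-side with its dual on the $G/U_{P^-}$-side) yields $\Gamma(\Vin_G|_{c_P}) \cong \bigoplus_{\lambda \in \Lambda_G^+} V_\lambda^* \otimes V_\lambda$ as $G\times G$-representations. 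Taking $U_P \times U_{P^-}$-invariants then collapses this to $\bigoplus_\lambda (V_\lambda^M)^* \otimes V_\lambda^M = \Gamma(\overline{M})$, and a diagram chase confirms that the resulting identification is induced by the embedding $\overline{M} \hookrightarrow \Vin_G|_{c_P}$ of Subsection \ref{Embedding of barM into Vin_G}.

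The hardest step will be part (a): normality is not formally implied by the representation-theoretic description above, because the invariant rings are taken with respect to the non-reductive group $U_P \times U_{P^-}$. My plan for (a) is to invoke the classification of affine algebraic monoids with reductive unit group $M$, going back to Vinberg, Putcha, and Renner: such a monoid is determined by its sub-semigroup of highest $M\times M$-weights in its function ring, and is normal precisely when this sub-semigroup is saturated. Part (b) exhibits this sub-semigroup as $\Lambda_G^+$, viewed inside the dominant weight monoid of $M$ via restriction; since $\Lambda_G^+$ is cut out there by the linear inequalities $\langle-,\alphacheck_i\rangle \geq 0$ for $i\in\CI\setminus\CI_M$, it is automatically saturated. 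The detailed combinatorial verification, together with the input from the classification theory, is carried out in \cite{W1}.
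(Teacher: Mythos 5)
First, a point of comparison: the paper does not actually prove Lemma \ref{bar M lemma} at all --- it is stated with the pointer ``see \cite{W1}'', and every detail is deferred to Wang's paper. So the question is whether your sketch stands on its own. In characteristic zero it essentially does: your Peter--Weyl computation for (b), the parallel computation for (c) (identifying $\Gamma(\Vin_G|_{c_P})$ with $\bigoplus_\lambda (V^\lambda)^*\otimes V^\lambda$ as a $G\times G$-module and then taking $U_P\times U_{P^-}$-invariants), and the reduction of (a) to saturation of the weight monoid $\Lambda_G^+\subset\Lambda_M^+$ are exactly the highest-weight-theoretic route that the cited reference formalizes, and the key linear-algebra point (that restriction of functionals identifies $((V^\lambda)^*)^{U_{P^-}}$ with the dual of $(V^\lambda)^{U_P}$) is correctly isolated.

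The genuine gap is the characteristic of the ground field. By the conventions of Subsection \ref{Conventions and notation}, $k$ is the algebraic closure of a finite field, so the lemma is needed in characteristic $p$, and there every semisimplicity input you use fails as stated: $\Gamma(G,\CO_G)$ is not a direct sum of summands $(V^\lambda)^*\otimes V^\lambda$ (it only carries a good filtration with such graded pieces), $(V^\lambda)^{U_P}$ is in general not an irreducible $M$-representation (it is controlled by a Weyl/dual Weyl module for $M$), and the Schur's-lemma pairing of isotypic components in your computation of $\Gamma(\Vin_G|_{c_P})$ in (c) has no meaning without complete reducibility. The strategy can be repaired --- one replaces direct sums by good filtrations and uses that $U_P$-invariants interact well with them, which is how \cite{W1} proceeds --- but as written your proofs of (b) and (c) only cover characteristic zero. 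The same issue affects (a): the assertion that a reductive monoid ``is determined by its sub-semigroup of highest weights'' is itself a Schur's-lemma statement, and the passage from saturation of the weight monoid to normality of $\overline{M}$ needs a nontrivial and characteristic-sensitive bridge (relating normality of $\overline{M}$ to normality of its highest-weight monoid algebra); since you end by deferring exactly these points to \cite{W1}, your treatment of (a) does not go beyond what the paper already does by citation.
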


\medskip

\sssec{Spaces of effective divisors}

Let $\thetacheck \in \Lambdach_{G,P}^{pos}$. There exist unique non-negative integers $n_i \in \BZ_{\geq 0}$ such that $\check\theta$ is the image of $\sum_{i \in \CI \setminus \CI_M} n_i \alphacheck_i$ under the natural projection map $\Lambdach_G \onto \Lambdach_{G,P}$.
Then we define
$$X^{\thetacheck} \ \ = \ \ \prod_{i \in \CI} X^{(n_i)} \, .$$
Thus as a variety, the space $X^{\thetacheck}$ is a partially symmetrized power of the curve $X$. Its points can be thought of as $\Lambdach_{G,P}^{pos}$-valued divisors on $X$, i.e., as formal linear combinations $\sum_{k} \thetacheck_k x_k$ with $x_k \in X$ and $\thetacheck_k \in \Lambdach_{G,P}^{pos}$ satisfying $\sum_k \thetacheck_k = \thetacheck$.

\medskip

\sssec{The $G$-positive Hecke stack for $M$}
\label{The G-positive Hecke stack for M}

Recall that the \textit{$G$-positive Hecke stack of $M$} is defined as the mapping stack
$$\CH_{M, G-pos} \ \ := \ \ \Maps_{gen}(X, \, \overline{M}/ M \times M \, \supset \, \cdot / M) \, .$$
By construction the \textit{$G$}-positive Hecke stack admits a forgetful map
$$\CH_{M, G-pos} \ \ \longto \ \ \Bun_M \times \Bun_M \, .$$
As the connected components of $\Bun_M$ are indexed by $\pi_0(\Bun_M) = \Lambdach_{G,P}$, we obtain a disjoint union decomposition
$$\CH_{M, G-pos} \ \ = \ \ \bigcup_{\lambdach_1, \lambdach_2} \CH^{\lambdach_1, \lambdach_2}_{M, G-pos}$$
where the disjoint union runs over all $\lambdach_1, \lambdach_2 \in \Lambdach_{G,P}$ such that $\lambdach_1 \leq \lambdach_2$.

\medskip

\sssec{The $G$-positive affine Grassmannian for $M$}
\label{The G-positive affine Grassmannian for M}

Fixing a trivialization of one of the two $M$-bundles appearing in the definition of the $G$-positive Hecke stack $\CH_{M, G-pos}$ above we obtain the \textit{$G$-positive part of the Beilinson-Drinfeld affine Grassmannian of $M$}, which we denote by $\Gr_{M, G-pos}$. In other words, we define
$$\Gr_{M, G-pos} \ \ := \ \ \Maps_{gen}(X, \, \overline{M}/M \, \supset \, M/M = pt) \, .$$

\medskip

\sssec{Maps to spaces of effective divisors}
\label{Maps to spaces of effective divisors}
We denote by $T_M$ the torus
$$T_M \ := \ M/[M,M] \ = \ P/[P,P] \, .$$
Recall from \cite{BG1} that the quotient $G/[P,P]$ is strongly quasi-affine, and let $\overline{G/[P,P]}$ denote its affine closure. Let $\overline{T_M}$ denote the closure of $T_M$ in $\overline{G/[P,P]}$ under the natural embedding
$$T_M \ = \ P/[P,P] \ \longinto \ G/[P,P] \ \subset \ \overline{G/[P,P]} \, .$$
The action of $T_M$ on itself by left or right translation extends to an action on $\overline{T_M}$, and the mapping stack $\Maps_{gen}(X, \overline{T_M}/T_M \supset T_M/T_M = pt)$ admits a disjoint union decomposition into connected components
$$\Maps_{gen}(X, \overline{T_M}/T_M \, \supset \, T_M/T_M = pt) \ \ = \ \ \bigcup_{\check\theta \in \Lambdach_{G,P}^{pos}} X^{\check\theta} \, .$$

\medskip

The projection map $M \onto M/[M,M]$ extends to a map $\overline{M} \to \overline{T_M}$ which is compatible with the natural actions of $M \times M$ and $T_M \times T_M$. In particular we obtain a natural map
$$\Gr_{M, G-pos} = \Maps_{gen}(X, \overline{M}/M \supset pt) \longto \Maps_{gen}(X, \overline{T_M}/T_M \supset pt) = \bigcup_{\check\theta \in \Lambdach_{G,P}^{pos}} X^{\check\theta} \, .$$
We denote the inverse image of the connected component $X^{\check\theta}$ under this map by $\Gr^{\check\theta}_{M, G-pos}$.

\medskip

\sssec{Factorization of $\Gr_{M, G-pos}$}
\label{Factorization of Gr}

The collection of maps
$$\Gr^{\check\theta}_{M, G-pos} \ \longto \ X^{\check\theta}$$
from Subsection \ref{Maps to spaces of effective divisors} above satisfies the following \textit{factorization property}:
Let $\check\theta_1, \check\theta_2 \in \Lambdach_{G,P}^{pos}$ and let $\check\theta := \check\theta_1 + \check\theta_2$. Then the natural map
$$X^{\check\theta_1} \stackrel{\circ}{\times} X^{\check\theta_2} \ \longto \ X^{\check\theta}$$
defined by adding effective divisors induces the following cartesian square:
$$\xymatrix@+10pt{
\Gr^{\check\theta_1}_{M, G-pos} \stackrel{\circ}{\times} \Gr^{\check\theta_2}_{M, G-pos} \ar[r] \ar[d]   &   \Gr^{\check\theta}_{M, G-pos} \ar[d]      \\
X^{(\check\theta_1)} \stackrel{\circ}{\times} X^{(\check\theta_2)} \ar[r]         &       X^{(\check\theta)}       \\
}$$

\medskip

\ssec{The stratification}

\sssec{Strata maps}
\label{Strata maps}
The closed immersion
$\overline{M} \, \longinto \, \overline{(G/U_P \times G/U_{P^-})/M}$
from Subsection \ref{Embedding of barM into Vin_G} above induces a map of quotient stacks
$$\overline{M} / P \times P^- \ \ \longto \ \ \Bigl( \overline{(G/U_P \times G/U_{P^-})/M} \Bigr) / G \times G \, ,$$
which by Lemma \ref{Vinberg strata} in turn induces a map
$$f: \ \Maps_{gen}(X, \, \overline{M} / P \times P^- \, \supset \, M / P \times P^-) \ \ \longto \ \ \VinBun_G|_{c_P} \, .$$
Rewriting the quotient stack $ \overline{M} / P \times P^-$ as
$$ \overline{M} / P \times P^- \ \ = \ \  \cdot / P^{-} \ \underset{\cdot / M}{\times} \ \overline{M} / M \times M \ \underset{\cdot / M}{\times} \ \cdot / P \, ,$$
the disjoint union decomposition of the $G$-positive Hecke stack in Subsection \ref{The G-positive Hecke stack for M} above implies that the source of the map $f$ decomposes into a disjoint union
$$\bigcup_{(\lambdach_1, \lambdach_2)} \Bun_{P^-, \lambdach_1} \ \underset{\Bun_M}{\times} \ \CH^{\lambdach_1, \lambdach_2}_{M, G-pos} \ \underset{\Bun_M}{\times} \ \Bun_{P,\lambdach_2} \, ,$$
where $\lambdach_1, \lambdach_2 \in \Lambdach_{G,P} = \pi_0(\Bun_{P^-}) = \pi_0(\Bun_{P})$ and $\lambdach_1 \leq \lambdach_2$. We denote by $f_{\lambdach_1, \lambdach_2}$ the restriction of $f$ to the corresponding substack in the above decomposition.

\medskip

We will show that for any parabolic $P$ the fiber $\VinBun_G|_{c_P}$ admits the following \textit{defect stratification}:

\medskip

\begin{proposition}
\label{defect stratification proposition}
\begin{itemize}
\item[]
\item[]
\item[(a)] The map $f_{\lambdach_1, \lambdach_2}$ is a locally closed immersion. We denote the corresponding locally closed substack by
$$\sideset{_{\lambdach_1, \lambdach_2}}{_G}\VinBun|_{c_P} \ \longinto \ \VinBun_G|_{c_P} \, .$$
\item[(b)] The locally closed substacks $\sideset{_{\lambdach_1, \lambdach_2}}{_G}\VinBun|_{c_P}$ form a stratification of $\VinBun_G|_{c_P}$, i.e.: On the level of $k$-points the stack $\VinBun_G|_{c_P}$ is equal to the disjoint union
$$\VinBun_G|_{c_P} \ \ = \ \ \bigcup_{(\lambdach_1, \lambdach_2)} \ \sideset{_{\lambdach_1, \lambdach_2}}{_{G,P}}\VinBun \, ,$$
where the union runs over all $\lambdach_1, \lambdach_2 \in \Lambdach_{G,P}$ such that $\lambdach_1 \leq \lambdach_2$.
\end{itemize}
\end{proposition}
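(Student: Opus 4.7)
The key input is Lemma \ref{bar M lemma}(c), which realizes $\overline{M}$ as the spectrum of the $U_P \times U_{P^-}$-invariant functions on $\Vin_G|_{c_P}$, compatibly with the residual $M \times M$-action. My plan is to construct, functorially in $S$, a canonical inverse to the assembled map $f = \bigcup_{(\lambdach_1,\lambdach_2)} f_{\lambdach_1, \lambdach_2}$: I will extract from a point $(E_1, E_2, \sigma)$ of $\VinBun_G|_{c_P}$ a pair of honest parabolic reductions of $E_1, E_2$ together with a $G$-positive Hecke modification of their Levi-inductions, and match these to the three-term fiber product appearing in Subsection \ref{Strata maps}.

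To extract the defect data, first observe that the generic condition on $\sigma$ places it, on a dense open subset of $X$, in the open $G \times G$-orbit $(G/U_P \times G/U_{P^-})/M$ of $\Vin_G|_{c_P}$, yielding generic reductions of $E_1$ to $P^-$ and of $E_2$ to $P$. By properness of the partial flag varieties $G/P$ and $G/P^-$ and smoothness of $X$, these generic reductions extend uniquely to global reductions $F_1 \in \Bun_{P^-, \lambdach_1}$ and $F_2 \in \Bun_{P, \lambdach_2}$ with well-defined degrees $\lambdach_i \in \Lambdach_{G,P}$. Using $F_1, F_2$ one rewrites $\sigma$ as a $P^- \times P$-equivariant map $F_1 \times F_2 \to \Vin_G|_{c_P}$; composing with the $M \times M$-equivariant morphism $\Vin_G|_{c_P} \to \overline{M}$ dual to the invariant inclusion of Lemma \ref{bar M lemma}(c) and descending, one obtains a section $\phi$ of the associated bundle $(F_M^1 \times F_M^2) \times^{M \times M} \overline{M}$, where $F_M^i$ denotes the Levi-induction of $F_i$. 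Because the invariants map $\Vin_G|_{c_P} \to \overline{M}$ sends the open $G \times G$-orbit onto the open dense subgroup $M \subset \overline{M}$, the section $\phi$ generically lands in $M$, which is precisely the datum of a point in $\CH^{\lambdach_1, \lambdach_2}_{M, G-pos}$. This produces the desired triple in $\Bun_{P^-, \lambdach_1} \times_{\Bun_M} \CH^{\lambdach_1, \lambdach_2}_{M, G-pos} \times_{\Bun_M} \Bun_{P, \lambdach_2}$.

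The construction is manifestly canonical and functorial, so verifying that it inverts $f$ up to canonical isomorphism shows that each $f_{\lambdach_1, \lambdach_2}$ is a monomorphism of stacks; uniqueness of the extracted triple then gives assertion (b). To upgrade monomorphism to locally closed immersion in (a), one uses that the source of $f$ is a disjoint union of connected components indexed by $(\lambdach_1, \lambdach_2)$, together with the standard semi-continuity for $G$-positive Hecke stacks: under degeneration the difference $\lambdach_2 - \lambdach_1 \in \Lambdach_{G,P}^{pos}$ can only jump upward in the $\leq$-order, so the $(\lambdach_1, \lambdach_2)$-stratum is the intersection of an open and a closed substack of $\VinBun_G|_{c_P}$. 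The main technical obstacle is promoting the extraction from $k$-points to arbitrary $S$-families, i.e., verifying representability and functoriality of the intermediate steps --- the generic reductions, their extension by properness, and the passage to $U_P \times U_{P^-}$-invariants --- uniformly in $S$. For $G = \SL_2$ this was carried out in \cite{Sch1}, and the same template applies here once Lemma \ref{bar M lemma} is substituted for the ad hoc $\SL_2$-computations.
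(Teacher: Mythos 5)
There is a genuine gap, and it sits exactly where you relegate the difficulty to a ``technical obstacle.'' Your plan is to build, functorially in $S$, an inverse to the assembled map $f = \coprod f_{\lambdach_1,\lambdach_2}$, by extending the generic $P^-$- and $P$-reductions of $E_1, E_2$ to honest global reductions $F_1 \in \Bun_{P^-,\lambdach_1}$, $F_2 \in \Bun_{P,\lambdach_2}$. This cannot work: $f$ is not an equivalence of stacks (the strata are only locally closed, not open, so their disjoint union is not $\VinBun_G|_{c_P}$ as a stack), hence it admits no inverse on $S$-points. Concretely, for an $S$-family the generic reduction of a $G$-bundle on $X \times S$ extends across the non-generic locus only to a \emph{degenerate} (Drinfeld-type) reduction, i.e.\ to a point of $\tildeBun_P$, not of $\Bun_P$; the degrees $\lambdach_1, \lambdach_2$ jump in families, which is precisely why a defect stratification exists at all. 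What survives of your extraction is a statement about field-valued points, which indeed gives part (b); the paper obtains this more efficiently by observing that $\overline{M}/P\times P^- \to \Vin_G|_{c_P}/G\times G$ is proper and an isomorphism over the interior loci, and then applying the valuative criterion (uniqueness for injectivity, existence for surjectivity on $k$-points). Your extraction argument can also be salvaged to prove that each $f_{\lambdach_1,\lambdach_2}$ is a monomorphism (the paper instead pins down the $P^-$-bundle via the Tannakian maps $\Vin_G \to \End(V^\lambda)$ and the Pl\"ucker-type description of reductions), but only after restricting to points already in the image, and with a careful uniqueness-of-extension argument that you do not supply.

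The second gap is the passage from monomorphism to locally closed immersion in (a). Appealing to ``standard semi-continuity for $G$-positive Hecke stacks'' so that the stratum is ``the intersection of an open and a closed substack'' asserts the conclusion rather than proving it, and in any case a monomorphism whose image is a locally closed subset need not be an immersion without a further input such as properness. This is where the paper's key device enters: one extends $f_{\lambdach_1,\lambdach_2}$ to the proper map
$$\bar f_{\lambdach_1, \lambdach_2}: \ \tildeBun_{P^-, \lambdach_1} \underset{\Bun_M}{\times} \CH^{\lambdach_1,\lambdach_2}_{M, G-pos} \underset{\Bun_M}{\times} \tildeBun_{P,\lambdach_2} \ \longto \ \VinBun_G|_{c_P} \, ,$$
notes that the image of the boundary is closed, and over its open complement $\CU$ the restricted map is proper (base change of $\bar f$) and a monomorphism, hence a closed immersion into $\CU$; this is what yields the locally closed immersion. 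Without introducing these compactified strata maps (or an equivalent properness mechanism), your argument does not close.
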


\medskip

We will prove Proposition \ref{defect stratification proposition} in Subsection \ref{Compactifying the strata maps} below by compactifying the strata maps $f_{\lambdach_1, \lambdach_2}$. Before doing so we introduce the following terminology:

\sssec{Defect value and defect}

Each stratum
$$\sideset{_{\lambdach_1, \lambdach_2}}{_{G,P}}\VinBun \ = \ \Bun_{P^-, \lambdach_1} \ \underset{\Bun_M}{\times} \ \CH^{\lambdach_1, \lambdach_2}_{M, G-pos} \ \underset{\Bun_M}{\times} \ \Bun_{P,\lambdach_2}$$
admits a forgetful map to the stack $\CH^{\lambdach_1, \lambdach_2}_{M, G-pos}$. Given a $k$-point of $\VinBun_G|_{c_P}$ lying in this stratum, we refer to the corresponding $k$-point of $\CH^{\lambdach_1, \lambdach_2}_{M, G-pos}$ as its \textit{defect value} and to the positive coweight $\check\theta := \lambdach_2 - \lambdach_1 \in \Lambdach_{G,P}^{pos}$ as its \textit{defect}.

\bigskip

\ssec{Compactifying the strata maps}
\label{Compactifying the strata maps}

\sssec{Recollections on Drinfeld's compactifications $\tildeBun_P$}
\label{Recollections on Drinfeld's compactifications tildeBun_P}

We now briefly recall Drinfeld's compactifications $\tildeBun_P$; we refer the reader to \cite{BG1} for proofs and background.
The space $\tildeBun_P$ is defined as the mapping stack
$$\tildeBun_P \ := \ \Maps_{gen}(X, \, G \backslash \overline{G/U(P)} / M \, \supset \, \cdot / P) \, .$$
It naturally contains $\Bun_P$ as a dense open substack. The natural schematic map $\Fp: \Bun_P \to \Bun_G$ extends to a schematic map
$$\bar \Fp: \ \tildeBun_P \ \longto \ \Bun_G$$
which is proper when restricted to any connected component $\tildeBun_{P, \check\lambda}$ of $\tildeBun_P$, where $\check\lambda \in \pi_0(\tildeBun_P) = \Lambdach_{G,P}$.

\medskip

Finally, we recall that the space $\tildeBun_P$ admits the following stratification. The action map
$$G/U_P \times M \ \longto \ G/U_P$$
extends to an action map
$$\overline{G/U_P} \times \overline{M} \ \longto \ \overline{G/U_P}$$
of the monoid $\overline{M}$, which in turn induces a map
$$\overline{G/U_P} / G \times M \ \ \underset{\cdot / M}{\times} \ \ \overline{M} / M \times M \ \ \longto \ \ \overline{G/U_P} / G \times M \, .$$
Passing to mapping stacks we obtain, for any $\check\lambda \in \Lambdach_{G,P}$ and $\check\theta \in \Lambdach_{G,P}^{pos}$, natural maps
$$\CH^{\lambdach_1, \lambdach_2}_{M, G-pos} \underset{\Bun_M}{\times} \tildeBun_{P, \check\lambda + \check\theta} \ \ \longto \ \ \tildeBun_{P, \check\lambda} \, .$$
One can then show (see \cite{BG1}, \cite{BFGM}) that the restricted maps
$$\CH^{\lambdach_1, \lambdach_2}_{M, G-pos} \underset{\Bun_M}{\times} \Bun_{P, \check\lambda + \check\theta} \ \ \longto \ \ \tildeBun_{P, \check\lambda}$$
are locally closed immersions, and that they stratify $\tildeBun_{P, \check\lambda}$ as $\check\theta$ ranges over the set $\Lambdach_{G,P}^{pos}$:
$$\tildeBun_{P, \check\lambda} \ \ = \ \ \bigcup_{\check\theta \in \Lambdach_{G,P}^{pos}} \ \CH^{\lambdach_1, \lambdach_2}_{M, G-pos} \underset{\Bun_M}{\times} \Bun_{P, \check\lambda + \check\theta}$$

\medskip

\sssec{Compactifying the maps $f_{\lambdach_1, \lambdach_2}$}
Recall that the fiber
$$\Vin_G|_{c_P} \ \ = \ \ \overline{(G/U_P \times G/U_{P^-})/M}$$
carries a structure of semigroup (without unit) and naturally contains, by Subsection \ref{Embedding of barM into Vin_G} above, the varieties $\overline{G/U_P}$, $\overline{M}$, and $\overline{G/U_{P^-}}$ as subvarieties. We can thus define a map
$$\overline{G/U_P} \times \overline{M} \times \overline{G/U_{P^-}} \ \ \longto \ \ \Vin_G|_{c_P} = \overline{(G/U_P \times G/U_{P^-})/M}$$
by multiplying these three subvarieties. Alternatively, one can first act by $\overline{M}$ on either $\overline{G/U_P}$ or $\overline{G/U_{P^-}}$ and then multiply; this yields the same map.

\medskip

The above map gives rise to a map
$$\overline{G/U_{P^-}} / G \times M \ \ \underset{\cdot / M}{\times} \ \ \overline{M} / M \times M \ \ \underset{\cdot / M}{\times} \ \ \overline{G/U(P^-)} / G \times M \ \ \ \longto \ \ \ \Vin_G|_{c_P} / G \times G \, ,$$
which in turn induces maps
$$\bar f_{\lambdach_1, \lambdach_2}: \ \tildeBun_{P^-, \lambdach_1} \ \underset{\Bun_M}{\times} \ \CH^{\lambdach_1, \lambdach_2}_{M, G-pos} \ \underset{\Bun_M}{\times} \ \tildeBun_{P,\lambdach_2} \ \ \longto \ \VinBun_G|_{c_P} \, .$$
The maps $\bar f_{\lambdach_1, \lambdach_2}$ extend the strata maps $f_{\lambdach_1, \lambdach_2}$ from Subsection \ref{Strata maps} above, and the properness of $\tildeBun_P$ and $\tildeBun_{P^-}$ over $\Bun_G$ implies that the maps $\bar f_{\lambdach_1, \lambdach_2}$ are proper as well.

\medskip

\sssec{Proof of stratification results}
\begin{proof}[Proof of Proposition \ref{defect stratification proposition}]

\mbox{}

\medskip

\noindent \textit{Step 1: Set-theoretic stratification.}
We first claim that the map
$$\overline{M} / P \times P^- \ \ \longto \ \ \Bigl( \overline{(G/U_P \times G/U_{P^-})/M} \Bigr) / G \times G \, ,$$
used to define the strata maps is proper. Indeed, this follows from the fact that
$\overline{M}$ is closed in $\overline{(G/U_P \times G/U_{P^-})/M}$ and the fact that $P$ and $P^-$ are parabolic subgroups of $G$. Furthermore, this map becomes an isomorphism when restricted to the interior loci:
$$M / P \times P^- \ \ \stackrel{\cong}{\longto} \ \ \bigl((G/U_P \times G/U_{P^-})/M \bigr) / G \times G$$
Applying the mapping stack construction with the requirement of generic factorization through the interior loci to the above map of quotient stacks yields the disjoint union $f = \coprod f_{\lambdach_1, \lambdach_2}$ of the strata maps. Now the valuative criterion of properness shows that the map $f$ is a bijection on the level of $k$-points: The injectivity follows from the uniqueness part of the criterion, and the surjectivity from the existence part of the criterion. This establishes the stratification on the set-theoretic level, and will complete the proof of the Proposition once we show that the maps $f_{\lambdach_1, \lambdach_2}$ are indeed locally closed immersions.

\medskip

\noindent \textit{Step 2: The monomorphism property.}
To show that the map $f_{\lambdach_1, \lambdach_2}$ is a locally closed immersion, we first show that it is a monomorphism. To do so, first note that the closed immersion
$$\overline{M} \ \ \longinto \ \ \overline{(G/U_P \times G/U_{P^-})/M}$$
induces a closed immersion
$$\overline{M} / P \times P^- \ \ \longinto \ \ \Bigl( \overline{(G/U_P \times G/U_{P^-})/M} \Bigr) / P \times P^- \, ,$$
through which the map
$$\overline{M} / P \times P^- \ \ \longto \ \ \Bigl( \overline{(G/U_P \times G/U_{P^-})/M} \Bigr) / G \times G$$
inducing the strata maps $f_{\lambdach_1, \lambdach_2}$ factors.
Thus it suffices to show that, given an $X \times S$-point of $\overline{M} / P \times P^-$, the corresponding $P$-bundle and $P^-$-bundle on $X \times S$ are uniquely determined by the induced $X \times S$-point of $\Bigl( \overline{(G/U_P \times G/U_{P^-})/M} \Bigr) / G \times G$. We show this for the corresponding $P^-$-bundle; the case of the $P$-bundle is analogous.

\medskip

To do so, let for any dominant weight $\lambda \in \Lambda_G^+$ denote by $V^\lambda$ the corresponding \textit{Weyl module} of $G$, i.e., the module
$$V^\lambda \ := \ H^0 (G/B, \CO(- w_0 \lambda))^* \, ;$$
here $G/B$ denotes the flag variety of $G$, and $\CO(- w_0 \lambda))$ denotes the line bundle on $G/B$ corresponding to the dominant weight $-w_0(\lambda) \in \Lambda_G^+$.
Next recall from e.g. \cite[Ch. 1]{BG1}, \cite[Prop. 3.2.8]{Sch2} that, on any scheme, the datum of a reduction $F_{P^-}$ of a $G$-bundle $F_G$ to $P^-$ gives rise to, for each $\lambda \in \Lambda_G^+$, a surjection of associated vector bundles
$$V^\lambda_{F_G} \ \longonto \ (V^\lambda_{U_{P^-}})_{F_{P^-}} \, ,$$
and that conversely any $P^-$-bundle is uniquely determined by this collection of quotient vector bundles. We will now show that the collection of quotient vector bundles corresponding to the $P^-$-bundle on $X \times S$ under consideration above is indeed uniquely determined by the induced $X \times S$-point of $\bigl((G/U_P \times G/U_{P^-})/M \bigr) / G \times G$.

\medskip

By the definition of $\Vin_G$ via Tannakian formalism in Subsection \ref{The definition of Vin_G via classification of reductive monoids} above, the monoid $\Vin_G$ admits a $G \times G$-equivariant monoid homomorphism $\Vin_G \to \End(V^\lambda)$, for any $\lambda \in \Lambda_G^+$. In particular, any $X \times S$-point of $\Vin_G|_{c_P} / G \times G$ with corresponding $G$-bundles $F_G^1, F_G^2$ gives rise, for each $\lambda \in \Lambda_G^+$, to a map of vector bundles
$$V^\lambda_{F_G^1} \ \stackrel{\beta_\lambda}{\longto} \ V^\lambda_{F_G^2}$$
on $X \times S$.
But by definition of the map $f_{\lambdach_1, \lambdach_2}$, the surjection $V^\lambda_{F_G^1} \onto (V^\lambda_{U_{P^-}})_{F_{P^-}}$ agrees with the surjection
$$V^\lambda_{F_G^1} \ \stackrel{\beta_\lambda}{\longonto} \ \im(\beta_{\lambda}) \ \longinto V^\lambda_{F_G^2} \, .$$
This shows that the induced $X \times S$-point of $\Bigl( \overline{(G/U_P \times G/U_{P^-})/M} \Bigr) / G \times G$ uniquely determines the $P^-$-bundle, as desired.

\medskip

\noindent \textit{Step 3: Locally closed immersion.}
We can now show that $f_{\lambdach_1, \lambdach_2}$ is indeed a locally closed immersion. To do so, we denote by $\CB$ the boundary of

$$\tildeBun_{P^-, \lambdach_1} \ \underset{\Bun_M}{\times} \ \CH^{\lambdach_1, \lambdach_2}_{M, G-pos} \ \underset{\Bun_M}{\times} \ \tildeBun_{P,\lambdach_2} \, ,$$

\noindent i.e., the closed complement of the open substack

$$\Bun_{P^-, \lambdach_1} \ \underset{\Bun_M}{\times} \ \CH^{\lambdach_1, \lambdach_2}_{M, G-pos} \ \underset{\Bun_M}{\times} \ \Bun_{P,\lambdach_2} \, .$$

\medskip

\noindent The image of the boundary $\CB$ under the map $\bar f_{\lambdach_1, \lambdach_2}$ is a closed substack of $\VinBun_G$ since the map $\bar f_{\lambdach_1, \lambdach_2}$ is proper; let $\CU$ denote its open complement. We claim that taking the inverse image of $\CU$ under $\bar f_{\lambdach_1, \lambdach_2}$ yields the following cartesian square:
$$\xymatrix@+10pt{
\Bun_{P^-, \lambdach_1} \underset{\Bun_M}{\times} \CH^{\lambdach_1, \lambdach_2}_{M, G-pos} \underset{\Bun_M}{\times} \Bun_{P,\lambdach_2} \ar@{^(->}[r]^{\text{open}} \ar[d] \ar[rd]^{f_{\lambdach_1, \lambdach_2}} \ & \ \tildeBun_{P^-, \lambdach_1} \underset{\Bun_M}{\times} \CH^{\lambdach_1, \lambdach_2}_{M, G-pos} \underset{\Bun_M}{\times} \tildeBun_{P,\lambdach_2} \ar[d]^{\bar f_{\lambdach_1, \lambdach_2}} \\
\CU \ \ar@{^(->}[r]^{\text{open}}& \ \VinBun_G|_{c_P} \\
}$$

\medskip
\medskip

\noindent This follows from the fact that any point of $\VinBun_G|_{c_P}$ lying in the image of the boundary $\CB$ must, due to the stratification of $\tildeBun_P$ reviewed in Subsection \ref{Recollections on Drinfeld's compactifications tildeBun_P} above, have defect strictly greater than $\check\theta = \lambdach_2 - \lambdach_1$.

\medskip

The diagonal map of the above square is equal to the map $f_{\lambdach_1, \lambdach_2}$, which we have already shown to be a monomorphism. Hence the left vertical arrow is also a monomorphism. Being the base change of the proper map $\bar f_{\lambdach_1, \lambdach_2}$, the left vertical arrow is also proper, and thus it must be a closed immersion. This produces the desired factorization of the map $f_{\lambdach_1, \lambdach_2}$, showing that it is indeed a locally closed immersion.
\end{proof}

\bigskip
\bigskip
\bigskip

\section{Statements of theorems -- Geometry}
\label{Statements of theorems -- Geometry}

\bigskip

\ssec{Recollections}
\label{Recollections}

\sssec{Notation}
\label{Recollections notation}

For a scheme or stack $Y$ together with a map $Y \to \BA^1$ we denote by
$$\Psi: \ \D(Y|_{\BA^1 \setminus \{0\}}) \ \longto \ \D(Y|_{\{0\}})$$
the unipotent nearby cycles functor in the perverse and Verdier-self dual renormalization, i.e., we shift and twist the usual unipotent nearby cycles functor by $[-1](-\tfrac{1}{2})$. In this renormalization the functor $\Psi$ is t-exact for the perverse t-structure and commutes with Verdier duality literally and not just up to twist. We refer to $\Psi$ simply as \textit{the nearby cycles}. We refer the reader to \cite{gluing perverse sheaves} and \cite[Sec. 5]{BB} for background on unipotent nearby cycles.

\medskip

\sssec{The complex $\widetilde \Omega_P$}
\label{The complex tilde-Omega_P}
Let $P$ be a parabolic of $G$ and let $\lambdach_1, \lambdach_2 \in \Lambdach_{G,P}$ with $\lambdach_1 \leq \lambdach_2$. We now recall the definition of a certain complex $\widetilde \Omega_P^{\lambdach_1, \lambdach_2}$ on the $G$-positive part of the Hecke stack $\CH^{\lambdach_1, \lambdach_2}_{M, G-pos}$. To do so, let ${}_0Z^{P, \lambdach_1, \lambdach_2}_{rel}$ denote the \textit{open relative Zastava space} from \cite{BFGM} with degrees $\lambdach_1, \lambdach_2$; we recall its definition in Subsection \ref{Nearby cycles theorem} below.

\medskip

The stack ${}_0Z^{P, \lambdach_1, \lambdach_2}_{rel}$ is smooth and comes equipped with a natural map
$$\pi_Z: \ \ {}_0Z^{P, \lambdach_1, \lambdach_2}_{rel} \ \longto \ \CH^{\lambdach_1, \lambdach_2}_{M, G-pos} \, .$$
Let
$$\IC_{{}_0Z^{P, \lambdach_1, \lambdach_2}_{rel}} \ \ = \ \ \Qellbar_{{}_0Z^{P, \lambdach_1, \lambdach_2}_{rel}}[\dim {}_0Z^{P, \lambdach_1, \lambdach_2}_{rel}](\tfrac{1}{2} \dim {}_0Z^{P, \lambdach_1, \lambdach_2}_{rel})$$
denote the IC-sheaf of ${}_0Z^{P, \lambdach_1, \lambdach_2}_{rel}$. Then the complex $\widetilde \Omega_P^{\lambdach_1, \lambdach_2}$ is defined as the pushforward
$$\widetilde \Omega_P^{\lambdach_1, \lambdach_2} \ \ := \ \ \pi_{Z,!} \, \bigl( \IC_{{}_0Z^{P, \lambdach_1, \lambdach_2}_{rel}} \bigr) \, .$$
The statements of our main theorems will in fact rather involve the Verdier dual of $\widetilde \Omega_P^{\lambdach_1, \lambdach_2}$
$$\BD \, \widetilde \Omega_P^{\lambdach_1, \lambdach_2} \ \ = \ \ \pi_{Z,*} \, \bigl( \IC_{{}_0Z^{P, \lambdach_1, \lambdach_2}_{rel}} \bigr) \, .$$

\bigskip

\ssec{Main theorem about nearby cycles}

\sssec{Nearby cycles for various parabolics $P$}
We can now state our main theorem describing the stalks of the nearby cycles functors arising from the multi-parameter degeneration $\VinBun_G \to T_{adj}^+ = \BA^r$. Fix a parabolic $P$ of $G$ and consider the line
$$L_P = \BA^1 \ \longinto \ T_{adj}^+ = \BA^r$$
passing through the points $c_G$ and $c_P$ of $T_{adj}^+ = \BA^r$; here we identify the point $1 \in \BA^1$ with the point $c_G$ and the point $0 \in \BA^1$ with the point $c_P$. Let $\VinBun_G|_{L_P}$ denote the restriction of the family $\VinBun_G \to T_{adj}^+$ to the line $L_P = \BA^1$, and consider the nearby cycles functor associated to this one-parameter family. Let $\Psi_P \in D(\VinBun_G|_{c_P})$ denote the nearby cycles of the IC-sheaf
$$\IC_{\VinBun_G}|_{L_P} \ \ = \ \ \Qellbar[\dim \Bun_G + 1](\tfrac{1}{2} \dim \Bun_G + \tfrac{1}{2}) \, .$$
Then we have:

\medskip

\begin{theorem}
\label{nearby cycles theorem}
The $*$-restriction of $\Psi_P$ to the stratum
$$\sideset{_{\lambdach_1, \lambdach_2}}{_G}\VinBun|_{c_P} \ \ = \ \ \Bun_{P^-, \lambdach_1} \ \underset{\Bun_M}{\times} \ \CH^{\lambdach_1, \lambdach_2}_{M, G-pos} \ \underset{\Bun_M}{\times} \ \Bun_{P,\lambdach_2}$$
of $\VinBun_G|_{c_P}$ is equal to
$$\IC_{\Bun_{P^-, \lambdach_1}} \underset{\Bun_M}{\boxtimes} \ \BD \, \widetilde \Omega_P^{\lambdach_1, \lambdach_2} \underset{\Bun_M}{\boxtimes} \IC_{\Bun_{P, \lambdach_1}} \, .$$
\end{theorem}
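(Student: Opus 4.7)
The plan is to prove the theorem by constructing, étale-locally around any point of the stratum $\sideset{_{\lambdach_1,\lambdach_2}}{_G}\VinBun|_{c_P}$, a local model for the degeneration $\VinBun_G|_{L_P}\to L_P = \BA^1$ in which the nearby cycles computation becomes tractable and manifestly takes the desired factorized form. The expected shape of such a local model is: given a point $(F_{P^-},h,F_P)$ of the stratum (with $h\in \CH^{\lambdach_1,\lambdach_2}_{M,G-pos}$ the defect value), a smooth chart around it in $\VinBun_G|_{L_P}$ is étale-locally a product of the two factors $\Bun_{P^-,\lambdach_1}$ and $\Bun_{P,\lambdach_2}$ with a \emph{universal local model} $\CZ_P \to \CH^{\lambdach_1,\lambdach_2}_{M,G-pos}\times L_P$ whose generic fiber over $L_P$ is the open relative Zastava space ${}_0Z^{P,\lambdach_1,\lambdach_2}_{rel}$ (with its projection $\pi_Z$) and whose special fiber over $c_P$ records the defect datum. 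This is the promised ``factorization in families'' announced in the introduction, and constructing it is where the bulk of Section~\ref{Proofs I --- Construction of local models for VinBun_G} is expected to be spent.

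Granting these local models, the actual sheaf-theoretic computation is short. First, since nearby cycles commute with smooth pullback and since the two outer factors $\Bun_{P^\pm}$ contribute their IC-sheaves via smooth projections onto Drinfeld's compactifications, the $*$-restriction of $\Psi_P$ to the stratum pulls back (up to the shift/twist conventions of Subsection~\ref{Recollections}) to $\IC_{\Bun_{P^-,\lambdach_1}}\boxtimes_{\Bun_M}(\Psi_{\CZ_P}|_{\text{zero-section}})\boxtimes_{\Bun_M}\IC_{\Bun_{P,\lambdach_2}}$, where $\Psi_{\CZ_P}$ denotes the unipotent nearby cycles of the one-parameter family $\CZ_P\to L_P$. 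It thus suffices to identify $\Psi_{\CZ_P}$ pushed forward along $\pi_Z$ (which collapses the zero-section to the Hecke stack) with $\BD\,\widetilde\Omega_P^{\lambdach_1,\lambdach_2}$.

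For this identification I would argue as follows. The generic fiber of $\CZ_P$ over $L_P$ is smooth and its map to $\CH^{\lambdach_1,\lambdach_2}_{M,G-pos}$ factors through $\pi_Z:{}_0Z^{P,\lambdach_1,\lambdach_2}_{rel}\to \CH^{\lambdach_1,\lambdach_2}_{M,G-pos}$; moreover $\CZ_P$ is a well-behaved degeneration with the generic fiber's IC equal to the (shifted, twisted) constant sheaf, which pushes forward to $\BD\,\widetilde\Omega_P^{\lambdach_1,\lambdach_2}=\pi_{Z,*}\IC_{{}_0Z^{P,\lambdach_1,\lambdach_2}_{rel}}$. Provided the map $\CZ_P\to L_P$ has the expected geometric properties (proper over its image in $\CH^{\lambdach_1,\lambdach_2}_{M,G-pos}\times L_P$, smooth generic fiber, and central fiber supported on the zero-section of the Hecke stack), the proper base change theorem together with compatibility of $\Psi$ with proper pushforward reduces the computation to taking nearby cycles in the model, which yields precisely $\BD\,\widetilde\Omega_P^{\lambdach_1,\lambdach_2}$. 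The conventions of Subsection~\ref{Recollections} — in particular the $[-1](-\tfrac{1}{2})$ renormalization making $\Psi$ perverse t-exact and Verdier self-dual — are chosen exactly so that the shifts match with the $\boxtimes_{\Bun_M}$ normalization.

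The main obstacle, as already anticipated, is the construction of the local model $\CZ_P$ and the verification that it étale-locally captures the singularities of $\VinBun_G$ along the stratum. Concretely one must (i) exhibit a smooth chart on $\VinBun_G|_{L_P}$ whose underlying geometry factors as $\Bun_{P^-}\times \Bun_P \times \CZ_P$ over the appropriate Hecke base, (ii) identify the generic fiber of $\CZ_P$ with an open Zastava space compatibly with the map to $\CH^{\lambdach_1,\lambdach_2}_{M,G-pos}$, and (iii) show that the central fiber of $\CZ_P$ is supported on (a neighborhood of) the zero-section so that the nearby cycles output is a complex on the Hecke stack rather than on a larger space. Step (i) is the analogue, in the Vinberg setting, of the role the Zastava spaces of \cite{BFGM} play for Drinfeld's $\tildeBun_P$; once it is available, the rest of the argument is a formal consequence of proper base change and smooth invariance of nearby cycles.
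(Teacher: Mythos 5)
Your overall architecture --- absorb the $\Bun_{P^-,\lambdach_1}$ and $\Bun_{P,\lambdach_2}$ directions by smoothness and reduce to a factorizable local model over $\CH^{\lambdach_1,\lambdach_2}_{M,G-pos}\times L_P$ whose generic fiber is the open relative Zastava space --- is indeed the paper's strategy, except that the paper's models $Y^{P,\check\theta}$ are global mapping stacks into $(\Vin_G)_{\geq P, strict}/P\times U_{P^-}$ (factorizing in families), and the reduction to them is quoted from \cite{BFGM}, \cite{BG2}, \cite{Sch1} rather than built from \'etale charts. The genuine gap is in your concluding sheaf computation. Your hypothesis (iii) is false for the actual geometry: the special fiber of the local model is not supported on the section of maximal defect; it carries a whole defect stratification with strata indexed by decompositions $\check\theta_1+\check\mu+\check\theta_2=\check\theta$, of which the section is only the smallest stratum. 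Moreover the projection from the model (equivalently $\pi_Z$ from the open Zastava space) to the Hecke base is not proper, so ``proper base change plus compatibility of $\Psi$ with proper pushforward'' is not available. Indeed, if such a properness argument existed it would output $\pi_{Z,!}\,\IC_{{}_0Z}=\widetilde\Omega_P^{\lambdach_1,\lambdach_2}$, whereas the theorem asserts the Verdier dual $\BD\,\widetilde\Omega_P^{\lambdach_1,\lambdach_2}=\pi_{Z,*}\,\IC_{{}_0Z}$; the appearance of a $*$-pushforward along a non-proper map is precisely what your outline cannot produce.

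The missing idea is the contraction mechanism. The paper constructs a section $\sigma:\Gr^{\check\theta}_{M,G-pos}\to Y^{P,\check\theta}$ onto the maximal defect stratum and a $\BG_m$-action (via a central cocharacter $\check\nu_M$ acting on the two Zastava factors of the closed embedding $\tau$ and on $T_{adj,\geq P, strict}^+$ through $-2\check\nu_M$) which contracts $Y^{P,\check\theta}$ onto $\sigma$ and preserves the one-parameter family over $L_P$. Braden's contraction principle then gives $\sigma^*F\cong\pi_*F$ for $\BG_m$-monodromic $F$; combined with the expression of unipotent nearby cycles through $i_P^*\,j_{G,*}$ tensored over $H^*(L_P\setminus\{0\},\Qellbar)$ with $\Qellbar$, and with the product decomposition $Y^{P,\check\theta}|_{L_P\setminus\{0\}}\cong {}_0Z^{P,\check\theta}\times(L_P\setminus\{0\})$, this yields $\sigma^*\Psi_P=\pi_{Z,*}\,\IC_{{}_0Z^{P,\check\theta}}=\BD\,\widetilde\Omega_P^{\check\theta}$. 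Without this hyperbolic-localization input (or an equivalent), the stalk of $\Psi_P$ along the defect stratum does not reduce to a global cohomology computation on the Zastava space, and your argument does not close.
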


\bigskip
\bigskip
\bigskip

\ssec{Main theorem about the $*$-extension of the constant sheaf}

\medskip

We now state our main theorem describing the $*$-stalks of the $*$-extension of the constant sheaf of the $G$-locus $\VinBun_{G,G}$. As will be clear from its formulation, this theorem is very closely related to the nearby cycles theorem, Theorem \ref{nearby cycles theorem} above; in fact, Theorem \ref{nearby cycles theorem} follows from a variant of Theorem \ref{ij} below. Thus Theorems \ref{nearby cycles theorem} and \ref{ij} will be proven simultaneously in Section \ref{Proofs II --- Sheaves} below.
To state the theorem, let $j_G$ denote open inclusion
$$j_G: \ \VinBun_{G,G} \ \longinto \ \VinBun_G$$
and let
$$\IC_{\VinBun_{G,G}} \ \ = \ \ \Qellbar_{\VinBun_{G,G}}[\dim \VinBun_{G,G}](\tfrac{1}{2} \dim \VinBun_{G,G})$$
denote the IC-sheaf of the $G$-locus. Then we have:

\medskip

\begin{theorem}
\label{ij}
The $*$-restriction of the $*$-extension $j_{G,*} \, \IC_{\VinBun_{G,G}}$ to the stratum
$$\sideset{_{\lambdach_1, \lambdach_2}}{_G}\VinBun|_{c_P} \ \ = \ \ \Bun_{P^-, \lambdach_1} \ \underset{\Bun_M}{\times} \ \CH^{\lambdach_1, \lambdach_2}_{M, G-pos} \ \underset{\Bun_M}{\times} \ \Bun_{P,\lambdach_2}$$
of the fiber $\VinBun_G|_{c_P}$ is equal to
$$\IC_{\Bun_{P^-, \lambdach_1}} \underset{\Bun_M}{\boxtimes} \ \Bigl( \BD \, \widetilde \Omega_P^{\lambdach_1, \lambdach_2} \otimes \bigl( H^*(\BA^1 \setminus \{0\})[1](\tfrac{1}{2}) \bigr)^{\otimes \, r - r_M} \Bigr) \underset{\Bun_M}{\boxtimes} \IC_{\Bun_{P, \lambdach_1}} \, .$$
\end{theorem}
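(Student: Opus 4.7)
The plan is to prove Theorems \ref{nearby cycles theorem} and \ref{ij} in parallel, using the local models for $\VinBun_G$ constructed in Section \ref{Proofs I --- Construction of local models for VinBun_G}. By design these models are smooth covers of $\VinBun_G$ that hit every defect stratum, and their key feature is that they are factorizable in families: each fiber over a point of $T_{adj}^+ = \BA^r$ is factorizable in the sense of Beilinson-Drinfeld, and these fibers vary compatibly with the parameter.

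First I would use the triviality of the fiber bundle $\VinBun_{G,P} \to T_{adj,P}^+$ together with the $T_{adj} \times T_{adj}$-equivariance of $v$ from Remark \ref{T_adj-action on VinBun_G} to reduce the statement to a local computation in a neighborhood of a single point of the defect stratum inside the fiber $\VinBun_G|_{c_P}$. Pulling back to the local model and applying the factorization-in-families property then splits the stalk of $j_{G,*}\IC_{\VinBun_{G,G}}$ into two essentially independent contributions: one from the ``Hecke'' factor, and one from the parameter directions in $T_{adj}^+$.

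On the Hecke factor, the local model restricts on the defect-free locus to an open substack of the relative Zastava space ${}_0Z^{P, \lambdach_1, \lambdach_2}_{rel}$, which is smooth; here the IC-sheaf is the shifted-twisted constant sheaf, and its $*$-pushforward along $\pi_Z$ produces $\BD\widetilde\Omega_P^{\lambdach_1, \lambdach_2}$ essentially by the definition in Subsection \ref{The complex tilde-Omega_P}. On the parameter side, near $c_P$ one has $r_M$ directions lying inside $T_{adj,P}^+$ (where $c_P$ is already an interior point and which contribute trivially to the $*$-stalk) together with $r - r_M$ transversal directions to $T_{adj,P}^+$ in $T_{adj}^+$ (on each of which $c_P$ lies at the boundary point $0 \in \BA^1$). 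For each of the latter the basic computation that $i^* j_* \IC_{\BA^1 \setminus \{0\}}$ at $0$ equals $H^*(\BA^1 \setminus \{0\})[1](\tfrac{1}{2})$ gives one factor, yielding the full $(r - r_M)$-fold tensor power appearing in the theorem. The IC-sheaves of $\Bun_{P^-, \lambdach_1}$ and $\Bun_{P,\lambdach_2}$ appearing via the boxed tensor products come from the trivial direction of the local model that projects to these stacks.

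The proof of Theorem \ref{nearby cycles theorem} then runs identically, except that the functor $\Psi_P$ for the one-parameter family along $L_P$ replaces the full $*$-pushforward $j_{G,*}$: along the line $L_P$ all $r - r_M$ vanishing coordinates move jointly with the single parameter, so no $H^*(\BA^1 \setminus \{0\})$ factors appear and one obtains the simpler formula. The main obstacle will lie in the local-models step: one must construct local models that are genuinely smooth onto neighborhoods of the defect strata, that are compatible with the open immersion $j_G$, and whose factorization in families is compatible both with the disjoint-locus decomposition of the Hecke stack $\CH_{M, G-pos}$ and with the Zastava-space pushforward computation. A secondary technical point is to verify carefully the shifts and twists of the transversal computation, ensuring that the exponent of $H^*(\BA^1 \setminus \{0\})[1](\tfrac{1}{2})$ is precisely $r - r_M$, and that the Verdier-dual complex $\BD\widetilde\Omega_P^{\lambdach_1, \lambdach_2}$ (rather than $\widetilde\Omega_P^{\lambdach_1, \lambdach_2}$ itself or some other variant) appears.
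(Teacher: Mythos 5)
There is a genuine gap at the heart of your argument: you never justify why the $*$-stalk of $j_{G,*}\,\IC$ along the (maximal-defect part of the) stratum is computed by the \emph{global} pushforward $\pi_{Z,*}\,\IC_{{}_0Z^{P}}$, i.e.\ by $\BD\,\widetilde\Omega_P$. You assert that "factorization in families" splits the stalk into a Hecke factor and a parameter factor, and that on the Hecke factor the answer is $\pi_{Z,*}\IC$ "essentially by definition" --- but factorization in families only controls behaviour in the curve directions (it identifies the model over a disjoint union of divisors with a product of smaller models, which is what gives Proposition \ref{factorization of nearby cycles proposition}); it does not separate the fiber direction from the $T_{adj}^+$-direction, and it says nothing about why a punctured-neighborhood cohomology (the stalk of a $*$-extension) equals a pushforward along $\pi_Z$. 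Likewise, your transversal-slice picture for the $r-r_M$ vanishing coordinates presupposes that near the deep strata the family is locally a product of the special fiber with $(\BA^1)^{r-r_M}$; it is not --- if it were, Theorem \ref{nearby cycles theorem} would be trivial. The missing idea is the contraction: the paper embeds $Y^{P,\check\theta}$ into $\tilde Z^{P^-,\check\theta}\times_{\Gr^{\check\theta}_{M,G\text{-}pos}}Z^{P,\check\theta}\times T^+_{adj,\geq P,strict}$ and constructs a $\BG_m$-action (via a central cocharacter $\check\nu_M$) contracting $Y^{P,\check\theta}$ onto the section $\sigma:\Gr^{\check\theta}_{M,G\text{-}pos}\to Y^{P,\check\theta}$ of $\pi$; the contraction principle (Lemma \ref{contraction principle}) then gives $\sigma^*\,j_{G,*}\IC \cong \pi_*\,j_{G,*}\IC = \pi_{G,*}\IC_{Y^{P,\check\theta}_G}$. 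Only after this step does the splitting you want occur, and it does so because the \emph{$G$-locus} (not a neighborhood of the stratum) is a product, $Y^{P,\check\theta}_G\cong {}_0Z^{P,\check\theta}\times T^+_{adj,\geq P,strict,G}$, whence Künneth yields $\BD\,\widetilde\Omega_P^{\check\theta}\otimes H^*((\BA^1\setminus\{0\})^{r-r_M})[r-r_M](\tfrac{r-r_M}{2})$; so the $H^*(\BA^1\setminus\{0\})$ factors arise as global cohomology of the open part of the parameter space, not from slice-by-slice computations of $i^*j_*$ on transversal lines.

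Two smaller points. First, the reduction that disposes of the $r_M$ "interior" directions is not just equivariance plus triviality of the bundle $\VinBun_{G,P}\to T^+_{adj,P}$: one needs Lemma \ref{lemma for *-extension for local models}, which uses the $T_{adj}$-action to split $\VinBun_{G,\geq P}$ as $\VinBun_{G,\geq P,strict}\times\prod_{i\in\CI_M}(\BA^1\setminus\{0\})$ and thereby replaces $j_{G,*}$ by the $*$-extension from the strict $G$-locus; your sketch gestures at the right statement but gives no mechanism. Second, the local models are not "smooth covers" of $\VinBun_G$; they carry the same singularities, and the reduction to them is the standard BFGM/BG2-type argument, which also accounts for the external factors $\IC_{\Bun_{P^-,\lambdach_1}}$ and $\IC_{\Bun_{P,\lambdach_2}}$ in the statement. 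Without the contraction step (or an explicit substitute computation of the punctured-neighborhood cohomology), the proposal does not prove the theorem.
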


\bigskip

\section{Statements of theorems -- Bernstein asymptotics}
\label{Statements of theorems -- Bernstein asymptotics}

The Bernstein asymptotics map is commonly treated in the principal case, i.e., for the Borel $B$ of $G$; for simplicity, we restrict to this case here as well. One can proceed analogously for the case of an arbitrary parabolic $P$ of $G$; a generalization of the proof below applies, and will be carried out in future work of Wang \cite{W2}.

\medskip

\ssec{The principal degeneration}

We specialize the discussion to the case $P=B$, i.e., to the family $\VinBun_G|_{L_B} \to L_B = \BA^1$. We will refer to this family as the \textit{principal degeneration} of $\Bun_G$ and will also denote it by $\VinBun_G^{princ}$; we denote the nearby cycles sheaf $\Psi_B$ also by $\Psi^{princ}$. We first repeat the basic definitions and statements in this notationally simpler case for the convenience of the reader. First, note that for $P=B$ we have $\Lambdach_{G,P} = \Lambdach_G$ and
$$\CH^{\lambdach_1, \lambdach_2}_{M, G-pos} \ = \ X^{\lambdach_2 - \lambdach_1} \times \Bun_{T, \lambdach_2} \, .$$
The stratification of the special fiber $\VinBun_G|_{c_B} = \VinBun_{G,B}$ thus takes the form
$$\VinBun_{G,B} \ \ = \ \ \bigcup_{(\lambdach_1, \check\theta, \lambdach_2)} \ \Bun_{B^-, \lambdach_1} \ \underset{\Bun_{T, \lambdach_1}}{\times} \ \bigl( \ X^{\check\theta} \times \Bun_{B, \lambdach_2} \bigr) \, ,$$
where $\lambdach_1, \lambdach_2 \in \Lambdach_G$, $\check\theta \in \Lambdach_G^{pos}$, and $\lambdach_2 - \check\theta = \lambdach_1$. Similarly to before let ${}_0Z^{B, \check\theta}$ denote the \textit{defect-free absolute Zastava space} for the Borel $B$ and a positive coweight $\check\theta \in \Lambdach_G^{pos}$ from \cite{FFKM}, \cite{BFGM}; see Subsection \ref{Recollections on Zastava spaces} below for the definition and for some basic properties. Let $\pi_Z: {}_0Z^{B, \check\theta} \to X^{\check\theta}$ denote the natural forgetful map, and as before define
$$\widetilde \Omega_B^{\check\theta} \ \ := \ \ \pi_{Z,!} \, \bigl( \IC_{{}_0Z^{B, \check\theta}} \bigr) \, .$$
Theorem \ref{nearby cycles theorem} above in this case then reads:

\begin{theorem}[Theorem \ref{nearby cycles theorem} in the principal case]
The $*$-restriction of $\Psi^{princ}$ to the stratum
$$\Bun_{B^-, \lambdach_1} \ \underset{\Bun_{T, \lambdach_1}}{\times} \ \bigl( X^{\check\theta} \times \Bun_{B, \lambdach_2} \bigr)$$
of $\VinBun_{G,B}$ is equal to
$$\IC_{\Bun_{B^-, \lambdach_1}} \underset{\Bun_T}{\boxtimes} \ \bigl( \BD \, \widetilde \Omega_B^{\check\theta} \ \boxtimes \ \IC_{\Bun_{B, \lambdach_1}} \bigr) \, .$$
\end{theorem}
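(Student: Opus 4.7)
The plan is to deduce this principal case from the general local-model machinery of Section \ref{Proofs I --- Construction of local models for VinBun_G}, via the fact that unipotent nearby cycles commute with smooth base change up to shift and twist. Concretely, one constructs a local model $\CY_B^{princ}$ for the one-parameter degeneration $\VinBun_G^{princ} \to \BA^1$ with the same transverse singularities along the $B$-stratum as $\VinBun_G^{princ}$, but with the additional structure of being \emph{factorizable in families} over the spaces $X^{\check\theta}$. This converts the global nearby cycles computation on $\VinBun_G^{princ}$ into a local computation on $\CY_B^{princ}$.

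First, I would fit the stratum in question into a smooth correspondence
\[
\Bun_{B^-, \lambdach_1} \underset{\Bun_T}{\times} \CY_B^{princ, \check\theta} \underset{\Bun_T}{\times} \Bun_{B, \lambdach_2} \ \longto \ \VinBun_G^{princ},
\]
where $\CY_B^{princ, \check\theta}$ is the local model restricted to defect $\check\theta$; smoothness on both sides of the correspondence would allow transferring the nearby cycles along it. The outer factors $\Bun_{B^-, \lambdach_1}$ and $\Bun_{B, \lambdach_2}$ are smooth, so their contribution to the $*$-stalks of $\Psi^{princ}$ is precisely $\IC_{\Bun_{B^-, \lambdach_1}} \boxtimes_{\Bun_T} (\,\cdot\,) \boxtimes_{\Bun_T} \IC_{\Bun_{B, \lambdach_2}}$, reducing the problem to identifying the middle factor supported over $X^{\check\theta}$.

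Second, I would use the factorization property of $\CY_B^{princ}$: for $\check\theta = \check\theta_1 + \check\theta_2$ the restriction of $\CY_B^{princ, \check\theta}$ over the disjoint locus $X^{\check\theta_1} \stackrel{\circ}{\times} X^{\check\theta_2} \subset X^{\check\theta}$ splits as a product of the lower-degree local models, compatibly with the degeneration coordinate. This reduces the nearby cycles computation to the main diagonal $X \hookrightarrow X^{\check\theta}$, i.e.\ to the case of divisors concentrated at a single point, where the generic fiber of the local model is identified with the defect-free Zastava space ${}_0Z^{B, \check\theta}$, and where the family itself is a Beilinson-Drinfeld-type deformation of that Zastava space. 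By smoothness of the generic fiber, $\IC = \Qellbar[\dim](\tfrac{1}{2}\dim)$, and the computation of $*$-stalks of nearby cycles on the special fiber along the map $\pi_Z: {}_0Z^{B, \check\theta} \to X^{\check\theta}$ yields the $*$-pushforward $\pi_{Z,*} \IC_{{}_0Z^{B, \check\theta}} = \BD \, \widetilde \Omega_B^{\check\theta}$, by Verdier duality applied to the definition of $\widetilde\Omega_B^{\check\theta}$.

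Third, reassembling via factorization, one obtains the claimed formula
$\IC_{\Bun_{B^-, \lambdach_1}} \boxtimes_{\Bun_T} (\BD \, \widetilde \Omega_B^{\check\theta} \boxtimes \IC_{\Bun_{B, \lambdach_2}})$ on the full stratum.
The main obstacle is the first step: the existence and geometric properties of the local models $\CY_B^{princ, \check\theta}$, namely (i) a smooth comparison with $\VinBun_G^{princ}$ along the defect stratum, (ii) factorization in families over $X^{\check\theta}$, and (iii) the identification of the generic fiber with the open Zastava space. Once these are established—this is precisely the geometric content of Section \ref{Proofs I --- Construction of local models for VinBun_G}—the sheaf-theoretic argument is a formal consequence of smooth base change for $\Psi$, of factorization, and of the self-duality of $\IC_{{}_0Z^{B, \check\theta}}$ (the open Zastava being smooth).
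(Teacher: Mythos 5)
Your first and third steps track the paper: the reduction to a local model via a smooth correspondence (with the outer factors $\Bun_{B^-,\lambdach_1}$, $\Bun_{B,\lambdach_2}$ contributing their IC-sheaves) is exactly how the paper passes from $\VinBun_G^{princ}$ to the models $Y^{B,\check\theta}$, and factorization in families is indeed established and used there. The gap is in your second step, which is where the actual theorem lives. You assert that at a point of maximal defect ``the computation of $*$-stalks of nearby cycles on the special fiber along the map $\pi_Z$ yields the $*$-pushforward $\pi_{Z,*}\IC_{{}_0Z^{B,\check\theta}}$,'' but no mechanism is offered, and this is not a formal consequence of smoothness of the generic fiber, of factorization, or of self-duality of $\IC_{{}_0Z^{B,\check\theta}}$. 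The $*$-stalk of $\Psi^{princ}$ at a point of the special fiber computes, roughly, the cohomology of a local ``Milnor fiber'' of the degeneration; identifying this local object with the \emph{global} relative cohomology of the open Zastava space over $X^{\check\theta}$ is precisely the nontrivial content, and reducing to divisors of the form $\check\theta x$ by factorization does not accomplish it. As stated, your step two is essentially a restatement of the conclusion.

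The paper supplies exactly the missing ingredient: the local model $Y^{B,\check\theta}$ carries a section $\sigma:\Gr^{\check\theta}_{M,G-pos}\to Y^{B,\check\theta}$ onto the stratum of maximal defect, together with a $\BG_m$-action (through the cocharacter $-2\check\nu_M$, acting nontrivially on the degeneration line $L_B$) which contracts $Y^{B,\check\theta}|_{L_B}$ onto $\sigma$; this is built from the idempotent $e_P$ and the closed embedding into a product of Zastava spaces. Then one applies the contraction principle (hyperbolic localization), $\sigma^*F\cong\pi_*F$ for $\BG_m$-monodromic $F$, together with the expression of the unipotent nearby cycles as
$i_P^*\,j_{G,*}\,\IC[-1](-\tfrac{1}{2})\otimes_{H^*(L_B\setminus\{0\},\Qellbar)}\Qellbar$,
and the product decomposition $Y^{B,\check\theta}|_{L_B\setminus\{0\}}\cong{}_0Z^{B,\check\theta}\times(L_B\setminus\{0\})$, to convert the stalk of $\Psi^{princ}$ into $\pi_{Z,*}\IC_{{}_0Z^{B,\check\theta}}=\BD\,\widetilde\Omega_B^{\check\theta}$. (The same $\BG_m$-equivariance also guarantees unipotence of the full nearby cycles, a point your proposal does not address.) Without this contraction argument, or a genuine substitute for it, your outline does not yield the stated stalk formula.
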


\medskip

\ssec{Factorization of nearby cycles}

We first record that the nearby cycles sheaf $\Psi^{princ}$ \textit{factorizes} in the sense of Proposition \ref{factorization of nearby cycles proposition} below. This fact also follows a posteriori from the stalk computation of Theorem \ref{nearby cycles theorem} above together with the factorization of the Zastava spaces. However, we will obtain this fact as a byproduct of our study of the geometry of the family $\VinBun_G$; in particular, this fact will be established without reliance on the formula in Theorem \ref{nearby cycles theorem} above. An analogous result holds for the nearby cycles $\Psi_P$ associated with other parabolics $P$ of $G$ and is proven in the same fashion.

\medskip

\begin{proposition}[Factorization of nearby cycles]
\label{factorization of nearby cycles proposition}
Let $y$ be a point of the special fiber $\VinBun_{G,B}$ and let $D \in X^{\check\theta}$ be its defect value. Then the $*$-stalk of $\Psi^{princ}$ at the point $y$ depends only on its defect value $D$; we denote the corresponding stalk by $\Psi^{princ}|^*_D$. Furthermore, if $D_1, D_2 \in X^{\check\theta}$ are $\Lambdach_G^{pos}$-valued divisors whose supports form disjoint subsets of the curve $X$ and such that $D_1 + D_2 = D$, then we have
$$\Psi^{princ}|^*_D \; [\dim(\Bun_G)](\tfrac{1}{2} \dim(\Bun_G)) \ = \ \Psi^{princ}|^*_{D_1} \otimes \Psi^{princ}|^*_{D_2} \, .$$
\end{proposition}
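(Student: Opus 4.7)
The plan is to exploit the local models for $\VinBun_G$ to be constructed in Section \ref{Proofs I --- Construction of local models for VinBun_G}, whose defining feature is that they are \emph{factorizable in families}. More precisely, for each $\check\theta \in \Lambdach_G^{pos}$ one will construct a multi-parameter degeneration $Y^{\check\theta}$ equipped with an étale-locally smooth morphism to $\VinBun_G$ that, on the special fiber, lands in the stratum of defect value in $X^{\check\theta}$ and is compatible with the family structure over $T_{adj}^+$. Pulled back to the line $L_B \subset T_{adj}^+$, the space $Y^{\check\theta}$ becomes a one-parameter family whose nearby cycles of the IC-sheaf compute the stalks of $\Psi^{princ}$.

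First I would deduce that the $*$-stalk $\Psi^{princ}|^*_y$ depends only on the defect value $D$ of $y$ and not on the $B^-$- and $B$-reduction data at $y$. Indeed, the morphism from $Y^{\check\theta}$ to $\VinBun_G$ is smooth in the directions of the moduli parameters, so by the compatibility of unipotent nearby cycles in the self-dual perverse normalization with smooth base change, the stalk at $y$ equals the corresponding stalk of the nearby cycles of $Y^{\check\theta}$ at the image point, which manifestly depends only on the image in $X^{\check\theta}$.

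For the factorization, the key structural input will be a canonical isomorphism of multi-parameter degenerations
$$Y^{\check\theta_1 + \check\theta_2}\big|_{X^{\check\theta_1} \stackrel{\circ}{\times} X^{\check\theta_2}} \ \ \cong \ \ Y^{\check\theta_1} \stackrel{\circ}{\times} Y^{\check\theta_2}$$
over the disjoint locus, where the right-hand side is endowed with the product family structure inherited from the two factors. Restricting this isomorphism to the line $L_B$ presents the one-parameter family on the left-hand side as the pullback, along the appropriate structural map $\BA^1 \to \BA^1 \times \BA^1$, of the product family on the right-hand side. A Thom-Sebastiani style computation then identifies the nearby cycles of an IC-sheaf on such a product family, restricted to stalks supported in both special fibers, with the external product of the individual nearby cycles. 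The Tate shift $[\dim \Bun_G](\tfrac{1}{2} \dim \Bun_G)$ appearing in the statement arises from the normalization bookkeeping: each individual local model carries one copy of the smooth factor corresponding to the ambient $\Bun_G$ data, so the product carries two, whereas the combined local model carries only one. The main obstacle will be the clean verification of the factorization isomorphism of local models as multi-parameter families; once this is in hand, the sheaf-theoretic conclusion is formal from the interplay of nearby cycles with smooth base change and external products.
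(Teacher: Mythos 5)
Your route is the paper's route: reduce to the local models, use the fact that over the disjoint locus the model of degree $\check\theta_1+\check\theta_2$ is the fiber product over the base of the models of degrees $\check\theta_1$ and $\check\theta_2$ (this is exactly Lemma \ref{factorization in families}; note it is a fiber product over $T_{adj}^+$, i.e.\ both factors share one degeneration parameter, which is equivalent to your ``diagonal pullback of the product family'' formulation), and then apply a K\"unneth-type statement for nearby cycles of $F \boxtimes_{\BA^1} F'$ on a fiber product over $\BA^1$. The reduction to local models and the shift bookkeeping are fine.

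The gap is in the sheaf-theoretic step you call a ``Thom--Sebastiani style computation.'' First, Thom--Sebastiani is about vanishing cycles of a sum of functions; the relevant statement here is the product formula of Beilinson--Bernstein for nearby cycles over a common base (Lemma \ref{Psi and products}, from \cite{BB}). More seriously, that formula holds for the \emph{full} nearby cycles $\Psi_{full}$, whereas $\Psi^{princ}$ is by our conventions the \emph{unipotent} nearby cycles. The unipotent part of a tensor product of monodromy representations is in general strictly larger than the tensor product of the unipotent parts (non-unipotent eigenvalues can pair to $1$), so the factorization of $\Psi_{full}$ does not formally descend to $\Psi$. You therefore need the additional input that in this situation the full nearby cycles are automatically unipotent; this is supplied by the $\BG_m$-action on the family lifting the standard $\BG_m$-action on the line $L_B = \BA^1$ (coming from the $T_{adj}$-action of Remark \ref{T_adj-action on VinBun_G}, respectively its analogue for the local models), via Lemma \ref{unipotence lemma}. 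With that unipotence statement added, your argument closes and coincides with the proof in the paper.
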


\medskip

\ssec{The function corresponding to $\Psi^{princ}$}

Let $\BF_q$ be a finite field with $q$ elements, let $X$ be a smooth projective curve over $\BF_q$, let $G$ be a reductive group  over $\BF_q$, and consider $\Bun_G$ over $\BF_q$. We will now state a combinatorial formula describing the function corresponding to the sheaf $\Psi^{princ}$ under the sheaf-function dictionary. To do so, we first introduce the following notation:

\sssec{Kostant partitions}
For $\thetacheck \in \Lambdach_G^{pos}$ we define a \textit{Kostant partition} of $\thetacheck$ to be a collection of non-negative integers $(n_{\betacheck})_{\betacheck \in \check R^+}$ indexed by the set of positive coroots $\check R^+$ of $G$, satisfying that
$$\thetacheck \ = \ \sum_{\betacheck \in \check R^+} n_{\betacheck} \betacheck \, .$$
In other words, a Kostant partition of $\thetacheck$ is a partition $\thetacheck = \sum_k \thetacheck_k$ of $\thetacheck$ where each summand $\thetacheck_k$ is in fact a positive coroot of $G$. Abusing notation we will simply refer to the expression $\thetacheck = \sum_{\betacheck \in \check R^+} n_{\betacheck} \betacheck$ as a Kostant partition of $\thetacheck$.

\medskip

The finite set of all Kostant partitions of $\thetacheck$ will be denoted by $\Kost(\thetacheck)$. The cardinality of the set $\Kost(\thetacheck)$ is by definition the value of the Kostant partition function of the Langlands dual group $\check G$ evaluated at the weight $\thetacheck \in \Lambdach_G = \Lambda_{\check G}$ of $\check G$.

\medskip

\sssec{The statement}
For a Kostant partition $\CK$ of a positive coweight $\check\theta \in \Lambdach_G^{pos}$ we let $R_{\CK}$ denote the set of coroots $\check\beta$ appearing in $\CK$ with a non-zero coefficient $n_{\check\beta}$. Let $|R_{\CK}|$ denote the cardinality of $R_{\CK}$. Then we have:

\medskip

\begin{theorem}
\label{function corresponding to Psi}
\begin{itemize}
\item[]
\item[]
\item[(a)]
Let $y$ be an $\BF_q$-point of $\VinBun_{G,B}$ of defect value $\check\theta x$ for $x \in X(\BF_q)$. Then the trace of the geometric Frobenius on the $*$-stalk at $y$ of the nearby cycles $\Psi_{\VinBun_G^{princ}}$ is equal to
$$q^{-\tfrac{1}{2} \dim(\Bun_G)} \, q^{\langle \rho, \check\theta \rangle} \sum_{\CK \, \in \, \Kost(\check \theta)} (1-q)^{|R_{\CK}|} \; q^{-|K|} \, .$$
\item[]
\item[(b)]
Due to the factorization of $\Psi_{\VinBun_G^{princ}}$ in Proposition \ref{factorization of nearby cycles proposition} above, if $y$ has defect value $\sum_i \check\theta_i x_i$ for $x_i \in X(\BF_q)$ distinct, then the corresponding trace is equal to
$$q^{-\tfrac{1}{2} \dim(\Bun_G)} \, \prod_i \ q^{\langle \rho, \check\theta_i \rangle} \sum_{\CK \, \in \, \Kost(\check\theta_i)} (1-q)^{|R_{\CK}|} \; q^{-|K|} \, .$$
\end{itemize}
\end{theorem}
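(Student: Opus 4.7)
The plan is to derive the combinatorial formula by combining Theorem \ref{nearby cycles theorem} with a Grothendieck-Lefschetz point count on the stalks of $\BD\widetilde\Omega_B^{\check\theta}$. Part (b) follows immediately from part (a) given the factorization property of $\Psi^{princ}$ established in Proposition \ref{factorization of nearby cycles proposition}, together with the fact that the sheaf-function correspondence is multiplicative under tensor products, so the main task is part (a).

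For part (a), first apply Theorem \ref{nearby cycles theorem} in the principal case to a point $y \in \VinBun_{G,B}(\BF_q)$ of defect value $\check\theta x$. This expresses the $*$-stalk of $\Psi^{princ}$ at $y$ as the tensor product of the $*$-stalks of $\IC_{\Bun_{B^-,\lambdach_1}}$, $\IC_{\Bun_{B,\lambdach_2}}$, and $\BD\widetilde\Omega_B^{\check\theta}|_{\check\theta x}$, suitably twisted by the $\boxtimes_{\Bun_T}$ convention. Since $\Bun_{B^\pm}$ are smooth, their IC sheaves are constant sheaves shifted by the dimensions, so the trace contributions from the first two factors are simply powers of $q$. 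A routine bookkeeping of these shifts, the $\boxtimes_{\Bun_T}$ convention, and the Tate twists yields the prefactor $q^{-\tfrac{1}{2}\dim\Bun_G}$; the additional $q^{\langle\rho,\check\theta\rangle}$ factor emerges from the dimension $\langle 2\rho,\check\theta\rangle$ of the fiber of ${}_0Z^{B,\check\theta}$ over $\check\theta x$, reflected through the perverse IC normalization.

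It then remains to show that the trace of Frobenius on the appropriately normalized stalk $\BD\widetilde\Omega_B^{\check\theta}|_{\check\theta x}$ equals $\sum_{\CK \in \Kost(\check\theta)} (1-q)^{|R_\CK|} q^{-|\CK|}$. The key geometric input is a stratification of the fiber $\pi_Z^{-1}(\check\theta x)$ indexed by Kostant partitions of $\check\theta$: the stratum labelled by $\CK = \sum_{\check\beta} n_{\check\beta} \check\beta$ records how the defect-free reduction at $x$ decomposes along the positive coroot subgroups of $N^-$ that appear with nonzero multiplicity in $\CK$. Each such stratum should be an affine bundle of explicit rank over a product of tori $\BG_m^{|R_\CK|}$ indexed by the set $R_\CK$ of occurring coroots. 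Applying the Grothendieck-Lefschetz trace formula stratum by stratum and observing that $|\BG_m(\BF_q)| = q-1$, with the sign discrepancies absorbed into the cohomological shifts, recovers the Kostant partition sum exactly as written. This is in essence a geometric incarnation of the classical Gindikin-Karpelevich product expansion, which is the reason one should expect such a formula.

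The main obstacle will be constructing the Kostant-partition stratification of $\pi_Z^{-1}(\check\theta x)$ and identifying each stratum concretely with an affine bundle over the appropriate torus; this requires a careful local analysis of the open Zastava space along the small diagonal of $X^{\check\theta}$, leveraging both the factorization structure of Subsection \ref{Factorization of Gr} and the description of $N^-$ via its one-parameter positive root subgroups. A secondary technical point is that $\pi_Z$ is not proper, so the identification of the $*$-stalk of $\pi_{Z,*}\IC$ with the cohomology of a single fiber requires either passing to the full affine Zastava compactification and applying proper base change, or first performing the analogous computation at the level of $\pi_{Z,!}$ and then recovering the $*$-statement via Verdier duality.
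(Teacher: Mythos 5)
Your reduction of part (b) to part (a) via Proposition \ref{factorization of nearby cycles proposition}, and of part (a) to a trace computation on $\BD\,\widetilde\Omega_B^{\check\theta}|^*_{\check\theta x}$ via Theorem \ref{nearby cycles theorem}, matches the paper. But the core of your plan has a genuine gap: a Grothendieck--Lefschetz point count of the fiber $\pi_Z^{-1}(\check\theta x)$ computes the trace function of $\pi_{Z,!}\,\IC_{{}_0Z^{B,\check\theta}} = \widetilde\Omega_B^{\check\theta}$ (by base change for $!$-pushforward), whereas the theorem requires the $*$-stalk of $\pi_{Z,*}\,\IC_{{}_0Z^{B,\check\theta}} = \BD\,\widetilde\Omega_B^{\check\theta}$, and these trace functions genuinely differ: already for $G=\SL_2$ and $\check\theta=\check\alpha$ the fiber is $\BG_m$ and one gets $1-q^{-1}$ versus $q^{-1}-1$, and for $\check\theta = \check\alpha_1+\check\alpha_2$ in $\SL_3$ the two answers are not even proportional. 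Neither of your two proposed fixes closes this. Verdier duality does not act pointwise on trace functions, so knowing the function (or even the weight-graded class of $H_c^*$ of the fiber) of $\pi_{Z,!}\IC$ does not determine the function of its dual at the same point: $i^*\,\BD\,\pi_{Z,!} = \BD\, i^!\,\pi_{Z,!}$, and $i^!$ does not commute with $\pi_{Z,!}$; the stalk of $\pi_{Z,*}$ at $\check\theta x$ is the cohomology of the preimage of a henselian neighborhood, which sees the degeneration of nearby fibers and not just the central fiber. And ``passing to the full affine Zastava compactification and applying proper base change'' is not available as stated: the Zastava space $Z^{B,\check\theta}$ is affine over $X^{\check\theta}$, so one would have to construct an actual relative compactification and control the boundary contributions to $j_*$, which is a substantial new task nowhere addressed.

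In addition, your main geometric input --- a stratification of $\pi_Z^{-1}(\check\theta x)$ by Kostant partitions with each stratum an affine bundle over $\BG_m^{|R_\CK|}$ --- is exactly the step you defer, and it is not needed in, nor supplied by, the paper. The paper's route is different and shorter: it quotes the Grothendieck-group descriptions of $\Omega^{\check\theta}$ (from \cite{BG2}, Sec. 3.3), of $U^{\check\theta}$ (from \cite{BFGM}, Thm. 4.5), and of $\widetilde\Omega_B^{\check\theta}$ as $\sum \add_*(\Omega^{\check\theta_1}\boxtimes U^{\check\theta_2})$ (from \cite{BG2}, Cor. 4.5); since the maps $i_\CK$ and $\add$ are finite, these identities dualize and restrict to the point $\check\theta x$ term by term (the stalks of the exterior powers $\Lambda^{(n)}$ kill all non-simple partitions), and the formula $\sum_{\CK}(1-q)^{|R_\CK|}q^{-|\CK|}$ then falls out of a purely combinatorial resummation over subsets $S\subset R_\CK$, with the prefactor coming from the codimension $\langle 2\rho,\check\theta\rangle$ of the defect stratum and the IC normalization (note also that the generic fiber of ${}_0Z^{B,\check\theta}$ over $X^{\check\theta}$ has dimension $\langle\rho,\check\theta\rangle$, not $\langle 2\rho,\check\theta\rangle$ as you wrote). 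To salvage your approach you would need, in effect, the $*$-stalk information with its Frobenius weights along the most degenerate point of the Zastava family --- which is precisely the content of the \cite{BG2}/\cite{BFGM} results the paper invokes --- so the fiber stratification and point count would not by themselves suffice.
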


\bigskip

\ssec{Bernstein asymptotics and the conjecture of Sakellaridis}

We now recall the formulation of Sakellaridis's conjecture from \cite{Sak2}; we will deduce it from Theorem \ref{function corresponding to Psi} in Section \ref{Proofs III -- Bernstein asymptotics} below.
In the context of the Bernstein map discussed in the introduction we take the local field $F$ to be $\BF_q(\!(t)\!)$ with ring of integers $\CO = \BF_q(\!(t)\!)$, and consider the group $G(F)$ with its standard maximal compact subgroup $K=G(\CO)$.
We will use the notation from \cite{Sak1}, \cite{Sak2}, and \cite{SakV} and refer the reader to these sources for details. In particular we let $\phi_0 \in C^{\infty}(G(F))$ denote the ``'basic function'', i.e., the characteristic function of the standard maximal compact subgroup $K$, and let $\Asymp(\phi_0)$ denote its image under the asymptotics map. As in the above sources we will denote by $\mathbbold{1}_{\check\theta}$ the characteristic function of the $K \times K$-coset in $((G/N \times G/N^-)/T)(F)$ corresponding to a coweight $\check\theta \in \Lambdach_G$.

\medskip

Next let $x \in X(\BF_q)$ and let
$$\Tr(\Frob, \Psi_{\VinBun_G^{princ}}|^*_{\check\theta x})$$
denote the trace of the geometric Frobenius on the $*$-stalk of the nearby cycles $\Psi_{\VinBun_G^{princ}}$ at a point of defect value $\check\theta x$, multiplied by the normalization factor $q^{\dim(\Bun_G)/2}$, which is a result of our normalization of IC-sheaves. Then we have:

\medskip

\begin{theorem}[Sakellaridis's conjecture from \cite{Sak2}]
\label{theorem Sakellaridis's conjecture}
$$\Asymp(\phi_0) \ = \ \sum_{\check\theta \in \Lambdach_{G}^{pos}} \Tr(\Frob, \Psi_{\VinBun_G^{princ}}|^*_{\check\theta x}) \, \mathbbold{1}_{\check\theta}$$
\end{theorem}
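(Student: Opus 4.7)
The plan is to deduce Sakellaridis's conjecture from the explicit combinatorial formula in Theorem \ref{function corresponding to Psi} by matching coefficients against a classical computation of $\Asymp(\phi_0)$. Both sides of the asserted identity are $K \times K$-biinvariant functions on $((G/N \times G/N^-)/T)(F)$ supported on the countable set of $K \times K$-cosets parametrized by $\Lambdach_G^{pos}$, so the statement reduces to comparing, for each fixed $\check\theta \in \Lambdach_G^{pos}$, the scalar coefficient of $\mathbbold{1}_{\check\theta}$ on each side.

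First I would read off the right-hand side. By Theorem \ref{function corresponding to Psi}(a), together with the normalization factor $q^{\dim(\Bun_G)/2}$ already built into the trace in the statement of Theorem \ref{theorem Sakellaridis's conjecture}, the coefficient of $\mathbbold{1}_{\check\theta}$ is
\[
q^{\langle \rho, \check\theta \rangle} \sum_{\CK \in \Kost(\check\theta)} (1-q)^{|R_{\CK}|} \, q^{-|\CK|},
\]
where $|\CK| = \sum_{\check\beta} n_{\check\beta}$ denotes the length of the Kostant partition. The factorization property of Proposition \ref{factorization of nearby cycles proposition} further reduces the general case of a defect divisor $\sum_i \check\theta_i x_i$ supported at several points to the one-point case; this is compatible with the product structure expected on the Bernstein side.

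Next I would compute $\Asymp(\phi_0)$ directly from its characterization in \cite{BK}, \cite{SakV}, \cite{Sak1} as the composition of the orispheric transform with the inverse of the intertwining operator. The orispheric transform of the basic function $\phi_0$ is the characteristic function of the image of $K \times K$ inside $((G/N \times G/N^-)/T)(F)$, and the inverse intertwining operator is governed by the classical Gindikin-Karpelevich formula
\[
\prod_{\check\beta \in \check R^+} \frac{1 - q^{-1} e^{\check\beta}}{1 - e^{\check\beta}} \, ,
\]
whose expansion, collected according to the $\Lambdach_G^{pos}$-grading on the group algebra of $\Lambdach_G$, is a sum indexed precisely by Kostant partitions. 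Expanding each factor as a geometric series produces the combinatorial factor $(1-q^{-1})^{|R_{\CK}|}$ for each partition $\CK$ and the power $q^{-|\CK|}$; the modular character $\delta^{1/2}$ of the Borel contributes the prefactor $q^{\langle \rho, \check\theta \rangle}$. After rescaling by the appropriate power of $q$ arising from the normalization of $\phi_0$, one obtains exactly the expression displayed in the previous paragraph.

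The main obstacle I anticipate is keeping all normalizations consistent: the IC-sheaf convention of the paper, the perverse and Tate-twist shift $[-1](-\tfrac{1}{2})$ built into $\Psi$ (cf.\ Subsection \ref{Recollections notation}), the modular characters entering the orispheric transform, and the Haar measure normalizations implicit in the definition of $\Asymp$ all contribute powers of $q$ that must be reconciled. Once this bookkeeping is done, the two combinatorial expressions match term by term; the compatibility with Proposition \ref{factorization of nearby cycles proposition} and with the factorization of the Gindikin-Karpelevich product over positive coroots then serves both as a consistency check and as the mechanism that upgrades the one-point identity to the full statement asserted in Theorem \ref{theorem Sakellaridis's conjecture}.
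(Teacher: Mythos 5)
Your overall strategy is the same as the paper's: reduce to comparing, for each $\check\theta \in \Lambdach_G^{pos}$, the coefficient of $\mathbbold{1}_{\check\theta}$ given by Theorem \ref{function corresponding to Psi} with the coefficient appearing in a Gindikin--Karpelevich-type formula for $\Asymp(\phi_0)$. The paper does not rederive that formula from the orispheric transform and the inverse intertwining operator; it simply quotes Sakellaridis's formula from \cite[Section 6]{Sak1},
$$\Asymp(\phi_0) \ = \ \prod_{\check\alpha \in \check R^+} \frac{1 - e^{\check\alpha}}{1 - q^{-1} e^{\check\alpha}}, \qquad e^{\check\lambda} := q^{\langle \rho, \check\lambda\rangle}\,\mathbbold{1}_{\check\lambda},$$
expands each factor as $1 + \sum_{i \geq 1} q^{-i}(1-q)\,e^{i\check\alpha}$, and multiplies out to get $\sum_{\CK \in \Kost(\check\theta)} (1-q)^{|R_{\CK}|} q^{-|\CK|}$ as the coefficient, matching Theorem \ref{function corresponding to Psi} on the nose.

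The genuine gap in your version is on the classical side. The product you display, $\prod_{\check\beta \in \check R^+} \frac{1 - q^{-1} e^{\check\beta}}{1 - e^{\check\beta}}$, is the reciprocal of the correct formula (it is the Gindikin--Karpelevich factor of the intertwining operator itself, not of its inverse), and your claimed expansion does not follow from it: expanding $\frac{1 - q^{-1}e}{1-e}$ gives coefficient $(1-q^{-1})$ for every $e^{i\check\beta}$ with $i \geq 1$ and no factor $q^{-i}$, so you would get $(1-q^{-1})^{|R_{\CK}|}$ with no $q^{-|\CK|}$; expanding the correct reciprocal gives $(1-q)^{|R_{\CK}|} q^{-|\CK|}$. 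Your proposed coefficient $(1-q^{-1})^{|R_{\CK}|} q^{-|\CK|}$ agrees with neither, and since $(1-q)^{|R_{\CK}|} = (-q)^{|R_{\CK}|}(1-q^{-1})^{|R_{\CK}|}$, the discrepancy with the required answer depends on the Kostant partition through $|R_{\CK}|$ (including a sign), so it cannot be absorbed into a single ``rescaling by an appropriate power of $q$'' as you suggest. The fix is either to cite Sakellaridis's formula directly, as the paper does, or, if you insist on rederiving it, to invert the product and redo the geometric-series expansion carefully so that the factor $q^{-i}(1-q)$ per coroot occurrence comes out; the rest of your argument (coefficient comparison, the $q^{\langle\rho,\check\theta\rangle}$ bookkeeping, and the use of Proposition \ref{factorization of nearby cycles proposition} for multi-point defect divisors) is sound.
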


\bigskip
\bigskip
\bigskip

\section{Proofs I --- Construction of local models for $\VinBun_G$}
\label{Proofs I --- Construction of local models for VinBun_G}

\bigskip

\ssec{Definition of the local models}

\sssec{Strict $P$-loci}
\label{Strict P-loci}
Throughout this section we fix a parabolic $P$ of $G$. Let $T_{adj, \geq P}^+$ denote the open subvariety of $T_{adj}^+$ formed by the union of those strata $T_{adj,Q}^+$ such that $P \subset Q$. Thus under the identification $T_{adj}^+ = \BA^r$ the open subvariety $T_{adj, \geq P}^+$ consists of those points $(c_i)_{i \in \CI}$ satisfying that $c_i \neq 0$ if $i \in \CI_M$.

\medskip

Let $T_{adj, \geq P, strict}^+$ denote the \textit{strict} version of $T_{adj, \geq P}^+$, defined as the closed subvariety
$$T_{adj, \geq P, strict}^+ \ \longinto \ T_{adj}^+ = \BA^r$$
obtained by requiring that $c_i = 1$ if $i \in \CI_M$. In particular $T_{adj, \geq P, strict}^+$ is itself an affine space $\BA^{r-r_M}$ of dimension $r-r_M$, and
$$T_{adj, \geq P, strict}^+ \ \subset \ T_{adj, \geq P}^+ \, .$$

\medskip

The affine space $T_{adj, \geq P, strict}^+$ admits a stratification
$$T_{adj, \geq P, strict}^+ \ \ = \ \ \bigcup_{Q \supseteq P} \ T_{adj, \geq P, strict, Q}^+ \, ,$$
indexed by parabolics $Q$ containing $P$, where $T_{adj, \geq, strict, Q}^+$ is defined as the intersection
$$T_{adj, \geq P, strict, Q}^+ \ \ : = \ \ T_{adj, \geq P, strict}^+ \ \cap \ T_{adj, Q}^+ \, .$$
By definition we have that $T_{adj, \geq P, strict, P}^+ \ = \ \{ c_P \}$ and that
$$T_{adj, \geq P, strict, G}^+ \ \cong \ (\BA^1 \setminus \{ 0 \})^{r-r_M} \ \subset \ \BA^{r-r_M} = T_{adj, \geq P, strict}^+$$

\medskip

Finally, we denote by $(\Vin_G)_{\geq P}$ and by $(\Vin_G)_{\geq P, strict}$ the inverse images of the corresponding loci in $T_{adj}^+$ under the map $\Vin_G \to T_{adj}^+$.

\medskip

\sssec{The open Bruhat locus}
We define \textit{the open Bruhat locus} $(\Vin_G)_{\geq P}^{Bruhat}$ in $(\Vin_G)_{\geq P}$ as the open subvariety obtained by acting by the subgroup $P \times U_{P^-} \subset G \times G$ on the section
$$\Fs: \ T_{adj, \geq P}^+ \ \longto \ (\Vin_G)_{\geq P} \, ,$$
i.e., we define $(\Vin_G)_{\geq P}^{Bruhat}$ as the open image of the map
$$P \times U_{P^-} \times T_{adj, \geq P}^+ \ \ \longto \ \ (\Vin_G)_{\geq P}$$
$$(p,u,t) \ \longmapsto \ (p,u) \cdot \Fs(t) \, .$$

\medskip

\noindent We define $(\Vin_G)_{\geq P, strict}^{Bruhat}$ analogously. Note that by definition the open Bruhat locus is contained in the non-degenerate locus, i.e.:
$$(\Vin_G)_{\geq P, strict}^{Bruhat} \ \ \subset \ \ (\Vin_G)_{\geq P, strict} \cap \sideset{_0}{_G}\Vin$$

\medskip

\sssec{GIT-quotient of the strict $P$-locus and the Bruhat locus}

The $G \times G$-action on $\Vin_G$ restricts to a $G \times G$-action on the strict locus $(\Vin_G)_{\geq P, strict}$, and thus induces a $G \times G$-action on its coordinate ring $k[(\Vin_G)_{\geq P, strict}]$. We now recall two lemmas about this action from \cite{W1}. The first one states that the GIT-quotient
$$(\Vin_G)_{\geq P, strict} \ // \ U_P \times U_{P^-} \ \ \ := \ \ \ \Spec(k[(\Vin_G)_{\geq P, strict}]^{U_P \times U_{P^-}})$$
of $(\Vin_G)_{\geq P, strict}$ by $U_P \times U_{P^-}$ is naturally isomorphic to $\overline{M} \times T_{adj, \geq P, strict}^+$, strengthening the assertion of part (c) of Lemma \ref{bar M lemma}:

\medskip

\begin{lemma}
\label{GIT quotient of strict locus}
The inclusion of the subring of $U_P \times U_{P^-}$-invariants
$$k[(\Vin_G)_{\geq P, strict}]^{U_P \times U_{P^-}} \ \longinto \ k[(\Vin_G)_{\geq P, strict}]$$
induces an $M$-equivariant map
$$(\Vin_G)_{\geq P, strict} \ \ \longto \ \ (\Vin_G)_{\geq P, strict} \ // \ U_P \times U_{P^-} \ = \ \overline{M} \times T_{adj, \geq P, strict}^+ \, .$$
The composition of this map with the projection onto the second factor recovers the usual map $(\Vin_G)_{\geq P, strict} \to T_{adj, \geq P, strict}^+$. The base change of this map along the inclusion $\overline{M} \times \{ c_P \} \into \overline{M} \times T_{adj, \geq P, strict}^+$ recovers the map from part (c) of Lemma \ref{bar M lemma}.
\end{lemma}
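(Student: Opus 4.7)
The plan is to construct the claimed map explicitly at the level of rings and then reduce the isomorphism claim to the fiber identification already established in Lemma~\ref{bar M lemma}(c), using a contracting torus argument. Concretely, I will produce a $k$-algebra homomorphism
$$\iota: \ k[\overline{M}] \, \otimes \, k[T_{adj, \geq P, strict}^+] \ \longto \ k[(\Vin_G)_{\geq P, strict}]^{U_P \times U_{P^-}}.$$
The second tensor factor embeds via pullback along $v$, landing in the $U_P \times U_{P^-}$-invariants since $v$ is $G \times G$-invariant. For the first factor I use the Tannakian description of $\Vin_G$ from Subsection~\ref{The definition of Vin_G via classification of reductive monoids}: for each $\lambda \in \Lambda_G^+$ such that the Weyl module $V^\lambda$ extends to a $\Vin_G$-representation, any $u \in (V^\lambda)^{U_{P^-}}$ and any $\xi \in ((V^\lambda)^*)^{U_P}$ yield a matrix-coefficient function $x \mapsto \xi(xu)$ on $\Vin_G$ which is $U_P \times U_{P^-}$-invariant, and whose restriction to $\Vin_G|_{c_P}$ recovers an element of $k[\overline{M}]$ under the identification of Lemma~\ref{bar M lemma}(c). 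Pulling these functions back to $(\Vin_G)_{\geq P, strict}$ defines the first factor of $\iota$. The $M \times M$-equivariance of $\iota$ is immediate from the $M \times M$-equivariance of this Tannakian construction, and the compatibility with the projection to $T_{adj, \geq P, strict}^+$ asserted in the second sentence of the Lemma is built into the construction of the second factor.

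To show $\iota$ is an isomorphism, I would use the subtorus $T_P := \prod_{i \notin \CI_M} \BG_m \subset T_{adj}$. Its standard action on $T_{adj, \geq P, strict}^+ = \BA^{r-r_M}$ has unique fixed point $c_P$ and contracts everything onto it; by the $T_{adj}$-action on $\Vin_G$ lifted from $\Fs$ (Subsection~\ref{The canonical section}), this action extends to a $T_P$-action on $(\Vin_G)_{\geq P, strict}$ that commutes with the $U_P \times U_{P^-}$-action. Both sides of $\iota$ thus become $T_P$-equivariant modules over $R := k[T_{adj, \geq P, strict}^+]$, and $\iota$ is $T_P$-equivariant. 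Flatness of $\iota$ on the left is obvious, while on the right it follows from flatness of $v$ (Subsection~\ref{The definition of Vin_G via classification of reductive monoids}) once we verify that $k[(\Vin_G)_{\geq P, strict}]^{U_P \times U_{P^-}}$ is itself $R$-flat. Assuming this flatness and its compatibility with specialization at $c_P$, a $T_P$-equivariant graded Nakayama argument reduces the isomorphism claim to the fiber at $c_P$, where it specializes to the isomorphism
$$k[\overline{M}] \ \stackrel{\cong}{\longto} \ k[\Vin_G|_{c_P}]^{U_P \times U_{P^-}}$$
of Lemma~\ref{bar M lemma}(c).

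The remaining compatibilities in the Lemma --- that the composition with projection to $T_{adj, \geq P, strict}^+$ equals $v$, and that base change along $\overline{M} \times \{c_P\} \hookrightarrow \overline{M} \times T_{adj, \geq P, strict}^+$ recovers the isomorphism of Lemma~\ref{bar M lemma}(c) --- are tautological from the construction of $\iota$. The hard part will be verifying the flatness of $k[(\Vin_G)_{\geq P, strict}]^{U_P \times U_{P^-}}$ over $R$ together with its commutation with specialization to $c_P$; these are non-automatic in positive characteristic since $U_P \times U_{P^-}$ is not reductive. I would handle them by decomposing $k[\Vin_G]$ Tannakianly as a direct sum indexed by $\lambda \in \Lambda_G^+$, which is $T_{adj}^+$-graded via $v$, and showing that the $U_P \times U_{P^-}$-invariants in each summand form a locally free $R$-module whose formation commutes with specialization. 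This should follow from the explicit $B \times B^-$-weight structure on the Weyl modules $V^\lambda$ and their duals, together with the normality of $\overline{M}$ provided by Lemma~\ref{bar M lemma}(a) and the structural results of \cite{W1}.
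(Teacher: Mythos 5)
The paper itself contains no proof of this lemma: it is recalled verbatim from \cite{W1}, so the only fair comparison is with what a complete argument actually requires. Your skeleton --- build a map $\iota \colon k[\overline{M}] \otimes k[T_{adj, \geq P, strict}^+] \to k[(\Vin_G)_{\geq P, strict}]^{U_P \times U_{P^-}}$ and then use the contracting torus plus graded Nakayama to reduce to the fiber at $c_P$, i.e.\ to Lemma \ref{bar M lemma}(c) --- is a sensible shape, but two steps are genuinely incomplete. First, the $k[\overline{M}]$-factor of $\iota$ is not well-defined as you describe it. Observing that the matrix coefficients $x \mapsto \xi(xu)$, with $\xi \in ((V^\lambda)^*)^{U_P}$ and $u \in (V^\lambda)^{U_{P^-}}$, are bi-invariant and restrict to functions on $\overline{M}$ does not yield an algebra homomorphism out of $k[\overline{M}]$: restriction to the fiber $\Vin_G|_{c_P}$ is far from injective on such coefficients (already for $G = \SL_2$, where $(\Vin_G)_{\geq B, strict} = \Mat_{2 \times 2}$, the bi-invariant coefficient $\det = v$ of the determinant Weyl module of $G_{enh} = \GL_2$ restricts to zero on the fiber, as does every coefficient divisible by a coordinate $c_i$, $i \notin \CI_M$). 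So ``pulling these functions back'' forces you to choose a multiplicative section of the restriction map, and verifying that any such choice is an algebra map is essentially the statement you are trying to prove. The clean repair is to define the $\overline{M}$-component geometrically, as pullback along $x \mapsto e_P \cdot x \cdot e_P$ with $e_P = \Fs(c_P)$ the canonical idempotent (cf.\ Lemma \ref{multiplying by the idempotent}); bi-invariance and the two compatibilities asserted in the lemma are then immediate, and all the content shifts to showing that the induced map on invariant rings is an isomorphism.

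Second, the Nakayama reduction needs exactly the input you set aside: $R$-flatness of $k[(\Vin_G)_{\geq P, strict}]^{U_P \times U_{P^-}}$ \emph{and} commutation of the formation of $U_P \times U_{P^-}$-invariants with specialization at $c_P$. Without the latter you cannot identify the special fiber of the GIT quotient with $\Spec(k[\Vin_G|_{c_P}]^{U_P \times U_{P^-}}) = \overline{M}$, so Lemma \ref{bar M lemma}(c) gives you no foothold; with it, the graded argument does go through (the action lifted through $\Fs$ extends to the sub-monoid $\BA^{r - r_M} \subset T_{adj}^+$, so all weights are non-negative and kernel and cokernel are bounded-below graded modules). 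But your proposed way of supplying this input --- ``decomposing $k[\Vin_G]$ Tannakianly as a direct sum indexed by $\lambda$'' --- is unavailable in the generality the paper needs: over a field of positive characteristic (and the paper works over $\barK = \overline{\BF}_q$) the coordinate ring of $\Vin_G$, like $k[G]$, admits only a good filtration with subquotients built from Weyl modules and their duals, not a $G \times G$-equivariant direct-sum decomposition, and invariants under the non-reductive group $U_P \times U_{P^-}$ are not exact, so local freeness and base change of invariants ``summand by summand'' is precisely the nontrivial point. This is the work the citation to \cite{W1} is standing in for; as written, your outline assumes it rather than proves it. In characteristic zero the direct-sum decomposition does hold, and your argument could plausibly be completed along the lines you sketch.
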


Over the open $M \subset \overline{M}$ we have:

\begin{lemma}
\label{GIT quotient of Bruhat locus}
The base change of the map from Lemma \ref{GIT quotient of strict locus} above along the inclusion $M \into \overline{M}$ yields a cartesian square
$$\xymatrix@+10pt{
(\Vin_G)_{\geq P, strict}^{Bruhat} \ar[r] \ar[d] & (\Vin_G)_{\geq P, strict} \ar[d]  \\
M \times T_{adj, \geq P, strict}^+ \ar[r] & \overline{M} \times T_{adj, \geq P, strict}^+ \\
}$$
in which all arrows are $M$-equivariant.
Furthermore, the left vertical arrow is a $U_P \times U_{P^-}$-torsor; thus we obtain an identification of the stack quotient
$$(\Vin_G)_{\geq P, strict}^{Bruhat}/U_P \times U_{P^-} \ \ \stackrel{\cong}{\longto} \ \ M \times T_{adj, \geq P, strict}^+ \, .$$
\end{lemma}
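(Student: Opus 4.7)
The plan is to establish the cartesian square and the torsor property first at the fiber over $c_P$, and then to propagate across $T_{adj, \geq P, strict}^+$. Write $\phi: (\Vin_G)_{\geq P, strict} \to \overline{M} \times T_{adj, \geq P, strict}^+$ for the map from Lemma \ref{GIT quotient of strict locus}. By that same lemma, the base change of $\phi$ along $\overline{M} \times \{c_P\} \hookrightarrow \overline{M} \times T_{adj, \geq P, strict}^+$ is the GIT-quotient $\Vin_G|_{c_P} \to \overline{M}$ of Lemma \ref{bar M lemma}(c), so the content of the present lemma at this fiber becomes a statement about this intrinsic GIT-quotient and the open subset $M \subset \overline{M}$.

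The central claim at the fiber $c_P$ is that the preimage $V := (\phi|_{c_P})^{-1}(M) \subset \Vin_G|_{c_P}$ is a trivial $U_P \times U_{P^-}$-torsor over $M$, and coincides with $(\Vin_G)^{Bruhat}_{\geq P, strict}|_{c_P}$. This is a Bruhat-type decomposition for the Vinberg fiber, directly parallel to the classical decomposition $G \supset U_P \cdot M \cdot U_{P^-}$, which by Lemma \ref{bar M lemma}(b) is the preimage of $M$ under the group-level GIT-quotient $G \to \overline{M}$ by $U_P \times U_{P^-}$. Using the Tannakian description of $\Vin_G$, the identification $\Vin_G|_{c_P} = \overline{(G/U_P \times G/U_{P^-})/M}$ from Lemma \ref{Vinberg strata}, and the $M \times M$-equivariant two-sided embedding of $\overline{M}$ from Subsection \ref{Embedding of barM into Vin_G}, I would transport the classical Bruhat decomposition to the Vinberg fiber and obtain the torsor structure of $V \to M$, with section supplied by $M \hookrightarrow \overline{M} \hookrightarrow \Vin_G|_{c_P}$ (Lemma \ref{bar M lemma}(c)). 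The identification $V = (\Vin_G)^{Bruhat}_{\geq P, strict}|_{c_P}$ will then follow as a comparison of two open subvarieties of $\Vin_G|_{c_P}$ — both carved out by the non-vanishing of the highest-weight component in the GIT-quotient, and both traced out by the $P \times U_{P^-}$-action on the canonical section point.

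To extend across $T_{adj, \geq P, strict}^+$: all relevant constructions — the map $\phi$, the Bruhat locus, and the preimage $\phi^{-1}(M \times T_{adj, \geq P, strict}^+)$ — are built as morphisms or subvarieties over $T_{adj, \geq P, strict}^+$ and are preserved by the $T_{adj}$-action on $\Vin_G$ inherited from $\Fs$ (Subsection \ref{The canonical section}). Combined with flatness of $\Vin_G \to T_{adj}^+$, the fiber-wise identifications and torsor property at $c_P$ propagate to give both the cartesian square and the global $U_P \times U_{P^-}$-torsor structure over $M \times T_{adj, \geq P, strict}^+$. The hard part will be the fiber-wise Bruhat-type decomposition at $c_P$: this is an intrinsic statement about the geometry of the Vinberg fiber $\Vin_G|_{c_P}$ rather than about the total space $\Vin_G$, and it is what forces the combinatorial heart of the argument; once this decomposition is in hand, the cartesianness and the torsor property over the entire strict locus follow from compatibility with base change in the $T_{adj, \geq P, strict}^+$-direction together with the $T_{adj}$-equivariance.
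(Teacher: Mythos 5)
First, a point of comparison: the paper does not prove this statement at all --- both Lemma \ref{GIT quotient of strict locus} and this lemma are explicitly recalled from Wang's paper \cite{W1} ("We now recall two lemmas about this action from \cite{W1}"), so your attempt is being measured against a citation rather than an in-text argument. Your core idea for the fiber over $c_P$ --- identify $(\phi|_{c_P})^{-1}(M)$ with the Bruhat-type cell $U_P\, M\, e_P\, U_{P^-}$ inside $\Vin_G|_{c_P} = \overline{(G/U_P \times G/U_{P^-})/M}$, using the $M\times M$-equivariant embedding of $\overline{M}$ as the section and the non-vanishing of highest-weight matrix coefficients to carve out both open sets --- is the right kind of argument, although as written it is a plan ("I would transport the classical Bruhat decomposition") rather than a proof.

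The genuine gap is the propagation step. The $T_{adj}$-action on $T_{adj}^+$ is coordinate-wise multiplication in the simple-root coordinates, so its orbits are exactly the coordinate strata; the subtorus $\prod_{i\notin \CI_M}\BG_m$ that preserves $T^+_{adj,\geq P,strict}$ has the strata $T^+_{adj,\geq P,strict,Q}$ as its orbits and \emph{fixes} the point $c_P$. Consequently, equivariance cannot carry the fiber-wise statement at $c_P$ to any other point of the strict locus, and "flatness of $\Vin_G \to T^+_{adj}$" is not a mechanism that upgrades a statement about one fiber to a statement about the others: the fibers over the open stratum (where the statement is the classical big-cell fact that the preimage of $M$ under $G \to \Spec\,\Gamma(G,\CO_G)^{U_P\times U_{P^-}} = \overline{M}$ is $U_P M U_{P^-}$, via Lemma \ref{bar M lemma}(b)) and over intermediate strata $Q$ with $P\subsetneq Q\subsetneq G$ are different spaces and each needs its own Bruhat-type identification of the same nature as your $c_P$ argument. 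The efficient fix is to argue uniformly over the whole of $T^+_{adj,\geq P,strict}$ rather than fiber by fiber: the matrix-coefficient/non-vanishing description of the Bruhat locus and the action map $U_P\times U_{P^-}\times M \times T^+_{adj,\geq P,strict} \to (\Vin_G)_{\geq P,strict}$, $(u_1,u_2,m,t)\mapsto (u_1,u_2)\cdot\bigl(m\cdot \Fs(t)\bigr)$, are defined over the entire strict locus at once, and showing this map is an open immersion with image $\phi^{-1}(M\times T^+_{adj,\geq P,strict})$ gives the cartesian square, the $M$-equivariance, and the (trivialized) torsor structure simultaneously. Relatedly, even if one had the fiber-wise statements at every point, the torsor property \emph{in families} still requires a short argument (e.g.\ exhibiting the section $(m,t)\mapsto m\cdot\Fs(t)$ and checking the resulting action map is an isomorphism); your proposal treats this as automatic.
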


\medskip

\sssec{The definition of the local models}
We now define the local model for the $P$-locus as
$$Y^P \ \ := \ \ \Maps_{gen}\bigl(X, \, (\Vin_G)_{\geq P, strict} / P \times U_{P^-} \ \supset \ (\Vin_G)_{\geq P, strict}^{Bruhat} / P \times U_{P^-} \bigr) \, .$$
Note that by Lemma \ref{GIT quotient of strict locus} above the open substack used in this definition satisfies
$$(\Vin_G)_{\geq P, strict}^{Bruhat} / P \times U_{P^-} \ \ = \ \ T_{adj, \geq P, strict}^+ \, .$$

\medskip

\sssec{Natural maps}
Analogously to $\VinBun_G$, the local model $Y^P$ comes equipped with a map
$$v: \ Y^P \ \longto \ T_{adj, \geq P, strict}^+ \, .$$
Furthermore, by Lemmas \ref{GIT quotient of strict locus} and \ref{GIT quotient of Bruhat locus} above, the natural map from the stack quotient to the GIT quotient
$$(\Vin_G)_{\geq P, strict} \ / \ U_P \times U_{P^-} \ \ \ \longto \ \ \ (\Vin_G)_{\geq P, strict} \ // \ U_P \times U_{P^-}$$
induces a map
$$Y^P \ \longto \ \Gr_{M, G-pos} \times T_{adj, \geq P, strict}^+ \, .$$

\medskip

\noindent Let $Y^P \to \Gr_{M, G-pos}$ be the map obtained by composing the above map with the projection onto the factor $\Gr_{M, G-pos}$. Given an element $\check\theta \in \Lambdach_{G,P}^{pos}$ we define $Y^{P, \check\theta}$ as the inverse image of $\Gr_{M, G-pos}^{\check\theta}$ under this map; thus we obtain a map
$$\pi: \ Y^{P, \check\theta} \ \longto \ \Gr_{M, G-pos}^{\check\theta}$$
and, composing with the map $\Gr_{M, G-pos}^{\check\theta} \to X^{\check\theta}$, a map
$$Y^{P, \check\theta} \ \longto \ X^{\check\theta} \, .$$

\medskip

\sssec{Stratification by parabolics}
\label{Stratification by parabolics for local models}
The stratification of $T_{adj, \geq P, strict}^+$ indexed by parabolic subgroups $Q$ of $G$ containing the parabolic $P$ induces a stratification
$$Y^{P, \check\theta} \ \ = \ \ \bigcup_{Q \supseteq P} \ Y^{P, \check\theta}_Q \, .$$
Furthermore, exactly as in Subsection \ref{The T_adj-action on VinBun_G} above we have:

\begin{remark}
\label{triviality of fiber bundles for local models}
Let $Q$ be a parabolic containing $P$. Then the fiber bundle $Y^{P, \check\theta}_Q \to T_{adj,Q, strict}^+$ is trivial.
\end{remark}

Next let ${}_0Z^{P,\check\theta}$ denote the defect-free Zastava space from \cite{BFGM}; its definition is recalled in Subsection \ref{Recollections on Zastava spaces} below. Then directly from the definition of $Y^{P, \check\theta}$ we see:

\begin{remark}
\label{fiber of local model over 1}
The fiber $Y^{P, \check\theta}|_{c_G}$ of $Y^{P, \check\theta}$ over the point $c_G \in T_{adj, \geq P, strict}^+$ is naturally isomorphic to the the defect-free Zastava space ${}_0Z^{P,\check\theta}$.
\end{remark}

\bigskip

\ssec{Factorization in families}

Let $t \in T_{adj, \geq P, strict}^+$ and let $Y^P|_t$ denote the fiber of the map $v$ over the point $t$. Then
since
$$(\Vin_G)_{\geq P, strict}^{Bruhat} / P \times U_{P^-} \ \ = \ \ T_{adj, \geq P, strict}^+$$
by Lemma \ref{GIT quotient of strict locus} above, we find that
$$Y^P|_t \ \ = \ \ \Maps_{gen}\bigl(X, \, (\Vin_G|_t) / P \times U_{P^-} \supset pt \bigr)$$
parametrizes maps from the curve $X$ to the quotient $(\Vin_G|_t) / P \times U_{P^-}$ which generically on $X$ factor through the dense open point
$$\bigl((\Vin_G)_{\geq P, strict}^{Bruhat}\bigr)|_t / P \times U_{P^-} \ \ = \ \ pt \, .$$
In particular, this shows that the spaces $Y^{P, \check\theta}|_t$ are \textit{factorizable} with respect to the maps $Y^{P, \check\theta}|_t \to X^{\check\theta}$, in the sense of Subsection \ref{Factorization of Gr} above. In fact, the above shows the stronger statement that the local models $Y^{P, \check\theta}$ \textit{factorize in families} over $T_{adj, \geq P, strict}^+$ in the sense of the following lemma:

\medskip

\begin{lemma}
\label{factorization in families}
Let $\check\theta_1, \check\theta_2 \in \Lambdach_{G,P}^{pos}$ and let $\check\theta := \check\theta_1 + \check\theta_2$. Then the addition map of effective divisors
$$X^{\check\theta_1} \stackrel{\circ}{\times} X^{\check\theta_2} \ \longto \ X^{\check\theta}$$
induces the following cartesian square:
$$\xymatrix@+10pt{
Y^{P, \check\theta_1} \underset{ \ T_{adj, \geq P, strict}^+}{\stackrel{\circ}{\times}} Y^{P, \check\theta_2} \ \ar[rr] \ar[d] & & \ Y^{P, \check\theta} \ar[d] \\
X^{(\check\theta_1)} \stackrel{\circ}{\times} X^{(\check\theta_2)} \ \ar[rr] & & \ X^{(\check\theta)} \\
}$$

\end{lemma}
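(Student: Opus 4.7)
The plan is to apply a standard Beilinson--Drinfeld factorization argument, which is particularly transparent here because, by Lemma \ref{GIT quotient of strict locus}, the open substack through which we demand generic factorization in the definition of $Y^P$ is precisely the scheme $T_{adj, \geq P, strict}^+$, embedded as $(\Vin_G)_{\geq P, strict}^{Bruhat}/P \times U_{P^-}$ inside the target stack. In particular, an $S$-point of $Y^P$ with $v$-value $t \in T_{adj, \geq P, strict}^+(S)$ is exactly a map $\varphi \colon X \times S \longto (\Vin_G)_{\geq P, strict}/P \times U_{P^-}$ which, off the support of its defect divisor, factors through the constant map $X \times S \longto S \longto T_{adj, \geq P, strict}^+$ determined by $t$.

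First I would construct the comparison map by gluing. An $S$-point of the fiber product on the left consists of a pair $(\varphi_1, \varphi_2)$ of $S$-points of $Y^P$ with common $v$-value $t$ and defect divisors $D_i \in X^{\check\theta_i}(S)$ of disjoint supports. Setting $V_i := (X \times S) \setminus \supp(D_{3-i})$ so that $V_1 \cup V_2 = X \times S$, both $\varphi_1|_{V_1 \cap V_2}$ and $\varphi_2|_{V_1 \cap V_2}$ factor through the constant map to $t$ and therefore coincide; we glue $\varphi_1|_{V_1}$ with $\varphi_2|_{V_2}$ to obtain a map $\varphi \colon X \times S \longto (\Vin_G)_{\geq P, strict}/P \times U_{P^-}$ with $v(\varphi) = t$ and defect divisor $D_1 + D_2 \in X^{\check\theta}(S)$.

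Conversely, starting from an $S$-point $\varphi$ of $Y^{P, \check\theta}$ whose defect $D$ lies in the disjoint locus $X^{\check\theta_1} \stackrel{\circ}{\times} X^{\check\theta_2}$, write $D = D_1 + D_2$ and choose open neighborhoods $W_i \subset V_{3-i}$ of $\supp(D_i)$. On $V_i$ the defect of $\varphi$ equals $D_i$, while on $W_{3-i}$ the map $\varphi$ factors through the constant $t$; define $\varphi_i$ by gluing $\varphi|_{V_i}$ with the constant map to $t$ on $W_{3-i}$, the two agreeing on $V_i \cap W_{3-i}$ since both restrict to $t$ there. The resulting $\varphi_i$ has defect exactly $D_i$, and the two constructions are manifestly mutually inverse and compatible with the projections to $X^{\check\theta_1} \stackrel{\circ}{\times} X^{\check\theta_2}$ and $X^{\check\theta}$.

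The main technical point is the bookkeeping verifying that defect divisors add correctly under gluing and that gluing respects the generic factorization condition. Both reduce to the locality of the defect on the source combined with the crucial fact that the generic factorization locus $T_{adj, \geq P, strict}^+$ in the target is a scheme rather than a stack with nontrivial automorphisms, so that two maps into it with the same $v$-value agree on the nose wherever both factor through the generic locus.
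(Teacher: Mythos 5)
Your argument is correct and is essentially the paper's own (implicit) proof: the paper deduces the lemma directly from the observation that the generic-factorization locus $(\Vin_G)_{\geq P, strict}^{Bruhat}/P \times U_{P^-}$ is the scheme $T_{adj, \geq P, strict}^+$ itself, so the standard Beilinson--Drinfeld gluing argument you spell out applies verbatim, fiberwise over the base and hence in families. One small index slip in your converse direction: the open set glued to $\varphi|_{V_i}$ should be a neighborhood of $\supp(D_{3-i})$ avoiding $\supp(D_i)$ (for instance simply $V_{3-i}$), rather than a neighborhood of $\supp(D_i)$ contained in $V_{3-i}$, which is impossible as written.
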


\bigskip

\ssec{Recollections on Zastava spaces}
\label{Recollections on Zastava spaces}

\sssec{The definition of parabolic Zastava space}
Let $P$ be a parabolic of $G$. Recall from \cite{BFGM} and \cite{FFKM} that the parabolic \textit{Zastava space} $Z^P$ is defined as
$$Z^P \ := \ \Maps_{gen}(X, \ (\overline{G/U_P}) / M \times U_{P^-} \ \supset \ pt)$$
where the dense open point corresponds to the open Bruhat cell $P \cdot U_{P^-} \subset G$. As is discussed in \cite{BFGM} and \cite{BG2}, the Zastava space forms a \textit{local model} for the space $\tildeBun_P$. Here we recall some relevant properties; we refer the reader to \cite{BFGM} and \cite{FFKM} for a more detailed treatment and proofs.

\medskip

\sssec{Basic properties}
\label{Zastava basic properties}
First, we recall that the open subspace
$${}_0Z^P \ \ := \ \ \Maps_{gen}(X, \ (G/U_P) / M \times U_{P^-} \ \supset \ pt)$$
of $Z^P$ is smooth.
Next, recall that the map $\overline{G/U_P} \to \overline{M}$ from Lemma \ref{bar M lemma} above induces a map $Z^P \to \Gr_{M, G-pos}$.
Similarly to above we denote by $Z^{P, \check\theta}$ the inverse image of $\Gr_{M, G-pos}^{\check\theta}$ under this map, and analogously for ${}_0Z^{P, \check\theta}$. By definition we obtain projection maps
$$Z^{P, \check\theta} \ \longto \ \Gr_{M, G-pos}^{\check\theta}$$
and
$$Z^{P, \check\theta} \ \longto \ X^{\check\theta} \, .$$
The spaces $Z^{P, \check\theta}$ are factorizable with respect to the maps $Z^{P, \check\theta} \to X^{\check\theta}$ in the sense of Subsection \ref{Factorization of Gr} above. 

\medskip

\sssec{Stratification}
\label{Zastava stratification}
The Zastava spaces $Z^{P, \check\theta}$ admit a \textit{defect stratification} analogous to the stratification of $\tildeBun_P$ discussed in Subsection \ref{Recollections on Drinfeld's compactifications tildeBun_P} above. Namely, as in Subsection \ref{Recollections on Drinfeld's compactifications tildeBun_P} above, the action map
$$\overline{M} \times G/U_P \ \longto \ \overline{G/U_P}$$
induces locally closed immersions
$$\CH^{-\check\theta, - \check\theta + \check\theta'}_{M, G-pos} \underset{\Bun_M}{\times} {}_0Z^{P, \check\theta - \check\theta'} \ \ \longinto \ \ Z^{P,\check\theta}$$
for any $\check\theta, \check\theta' \in \Lambdach_{G,P}^{pos}$ with $\check\theta' \leq \check\theta$. We denote the corresponding locally closed substack by ${}_{\check\theta'}Z^{P,\check\theta}$. Ranging over those $\check\theta' \in \Lambdach_{G,P}^{pos}$ satisfying $0 \leq \check\theta' \leq \check\theta$, the substacks ${}_{\check\theta'}Z^{P,\check\theta}$ form a stratification of $Z^{P, \check\theta}$:
$$Z^{P, \check\theta} \ \ = \ \ \bigcup_{0 \leq \check\theta' \leq \check\theta} \ {}_{\check\theta'}Z^{P,\check\theta}$$

\medskip

\sssec{Minor variants of Zastava space}
Below we will also consider the following two variants of the above Zastava space. First, we will consider the relative Zastava space
$$Z^P_{\Bun_M} \ := \ \Maps_{gen}(X, \ (\overline{G/U_P}) / M \times P^- \ \supset \ \cdot/M) \, .$$
It comes equipped with a forgetful map $Z^P_{\Bun_M} \to \Bun_M$ induced by the composite map
$$(\overline{G/U_P}) / M \times P^- \ \longto \ \cdot/P^- \ \longto \ \cdot/M \, ,$$
and the fiber of this forgetful map over the trivial $M$-bundle is precisely the Zastava space $Z^P$ considered above. The discussion from Subsections \ref{Zastava basic properties} and \ref{Zastava stratification} above carries over to this setting, and we use the analogous notation. Finally, given a coweight $\lambdach \in \Lambdach_{G,P} = \pi_0(\Bun_M)$ we denote by $Z^P_{{\Bun_M}_{\lambdach}}$ the restriction to the corresponding connected component of $\Bun_M$.

\medskip

Second, unlike above, consider now the map $Z^P_{\Bun_M} \to \Bun_M$ induced by the forgetful composite map
$$(\overline{G/U_P}) / M \times P^- \ \longto \ \cdot/ M \times P^- \ \longto \ \cdot/M \, .$$
Then we define the Zastava space $\tilde Z^P$ as the fiber of this map over the trivial $M$-bundle. The discussion of Subsections \ref{Zastava basic properties} and \ref{Zastava stratification} above applies to the Zastava space $\tilde Z^P$ as well, with the analogous notation.

\medskip

\sssec{Embeddings of Zastava spaces into affine Grassmannians}

\medskip

Next let $\Gr_G^{\check\theta}$ denote the Beilinson-Drinfeld affine Grassmannian parametrizing triples $(F_G, D, \eta)$ consisting of a $G$-bundle $F_G$ on the curve $X$, a $\Lambdach_{G,P}^{pos}$-valued divisor $D \in X^{\check\theta}$, and a trivialization of the $G$-bundle $F_G$ on the complement of the support of the divisor $D$. By construction the space $\Gr_G^{\check\theta}$ admits a forgetful map $\Gr_G^{\check\theta} \to X^{\check\theta}$, and is factorizable with respect to this map in the sense of Subsection \ref{Factorization of Gr} above. We recall from \cite{BFGM} that the Zastava spaces $Z^{P, \check\theta}$ and $\tilde Z^{P^-, \check\theta}$ admit natural locally closed embeddings into $\Gr_G^{\check\theta}$ which are compatible with the factorization structures.

\medskip

\sssec{Sections for Zastava spaces}
\label{Sections for Zastava spaces}
By Lemma \ref{bar M lemma} above, the inclusion $\overline{M} \into \overline{G/U_P}$ induces a section
$$\sigma_Z: \ \Gr_{M, G-pos}^{\check\theta} \ \longto \ Z^{P, \check\theta}$$
of the projection $Z^{P, \check\theta} \to \Gr_{M, G-pos}^{\check\theta}$. This section in fact maps $\Gr_{M, G-pos}^{\check\theta}$ isomorphically onto the stratum of maximal defect ${}_{\check\theta}Z^{P,\check\theta}$. Analogously we obtain a section
$$\sigma_{Z^-}: \ \Gr_{M, G-pos}^{\check\theta} \ \longto \ \tilde Z^{P^-, \check\theta}$$
of the projection map $\tilde Z^{P^-, \check\theta} \to \Gr_{M, G-pos}^{\check\theta}$.

\medskip

\sssec{Contractions for Zastava spaces}
\label{Contractions for Zastava spaces}
Next recall from \cite{MV} that any cocharacter $\check\lambda: \BG_m \to T$ naturally gives rise to an action of $\BG_m$ on the Beilinson-Drinfeld affine Grassmannian $\Gr_G^{\check\theta}$ which leaves the forgetful map $\Gr_G^{\check\theta} \to X^{\check\theta}$ invariant. Fix a cocharacter $\nu_M: \BG_m \to Z_M \subset T$ which contracts $U_{P^-}$ to the element $1 \in U_{P^-}$ when acting by conjugation.
Then it is shown in \cite{BFGM} that the corresponding $\nu_M$-action of $\BG_m$ on $\Gr_G^{\check\theta}$ preserves the subspace $Z^{P, \check\theta}$, and that this $\BG_m$-action contracts $Z^{P, \check\theta}$ onto the section $\sigma_Z$ above, i.e.: The action map of this $\BG_m$-action extends to a map
$$\BA^1 \times Z^{P, \check\theta} \ \longto \ Z^{P, \check\theta}$$
such that the composition
$$Z^{P, \check\theta} \ = \ \{ 0 \} \times Z^{P, \check\theta} \ \longinto \ \BA^1 \times Z^{P, \check\theta} \ \longto \ Z^{P, \check\theta}$$
agrees with the composition of the projection and the section
$$Z^{P, \check\theta} \ \longto \ \Gr_{M, G-pos}^{\check\theta} \ \stackrel{\sigma_Z}{\longto} \ Z^{P, \check\theta} \, .$$

\medskip

Analogously, the $(-\nu_M)$-action of $\BG_m$ on $\Gr_G^{\check\theta}$ preserves the subspace $\tilde Z^{P^-, \check\theta}$ and contracts $\tilde Z^{P^-, \check\theta}$ onto the section $\sigma_{Z^-}$.

\bigskip
\bigskip
\bigskip

\ssec{Stratification of the local models}

The stratification of the fiber $\VinBun_G|_{c_P}$ in Proposition \ref{defect stratification proposition} above induces an analogous stratification of $Y^{P, \check\theta}|_{c_P}$. To state it, let $\check\theta_1, \check\mu, \check\theta_2, \check\theta \in \Lambdach_{G,P}^{pos}$ with $\check\theta_1 + \check\mu + \check\theta_2 = \check\theta$. Then the strata map $f$ from Subsection \ref{Strata maps} above induces a locally closed immersion
$${}_0Z^{P^-, \check\theta_1}_{{\Bun_M}_{-\check\theta_2 - \check\mu}} \underset{\Bun_M}{\times} \CH^{-\check\theta_2 - \check\mu, -\check\theta_2}_{M, G-pos} \underset{\Bun_M}{\times} {}_0Z^{P, \check\theta_2} \ \ \ \longinto \ \ \ Y^{P, \check\theta}|_{c_P} \, .$$
We denote the corresponding locally closed substack of $Y^{P, \check\theta}|_{c_P}$ by ${}_{\check\theta_1, \check\mu, \check\theta_2}Y^{P, \check\theta}|_{c_P}$. We then have:

\medskip

\begin{corollary}
The locally closed substacks ${}_{\check\theta_1, \check\mu, \check\theta_2}Y^{P, \check\theta}|_{c_P}$ form a stratification of $Y^{P, \check\theta}|_{c_P}$, i.e.: On the level of $k$-points the space $Y^{P, \check\theta}|_{c_P}$ is equal to the disjoint union
$$Y^{P, \check\theta}|_{c_P} \ \ = \ \ \bigcup_{\check\theta_1 + \check\mu + \check\theta_2 \, = \, \check\theta} \ {}_{\check\theta_1, \check\mu, \check\theta_2}Y^{P, \check\theta}|_{c_P} \, .$$
\end{corollary}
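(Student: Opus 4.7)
The plan is to deduce the corollary by adapting the argument of Proposition \ref{defect stratification proposition} to the local setting. A $k$-point $y$ of $Y^{P,\check\theta}|_{c_P}$ is a map $X \to \Vin_G|_{c_P}/P \times U_{P^-}$ generically factoring through the open $P \times U_{P^-}$-orbit of $\Fs(c_P)$. Via the embeddings of the Zastava spaces into the Beilinson--Drinfeld Grassmannian (Subsection \ref{Recollections on Zastava spaces}) and the closed immersion $\overline{M} \longinto \Vin_G|_{c_P}$ from Subsection \ref{Embedding of barM into Vin_G}, the data of such a $y$ decomposes, at each point of the curve where the generic factorization fails, into three independent pieces: a Zastava-type defect on the ``$P^-$-side'' contributing to $\check\theta_1$, a Hecke-modification defect along the central $\overline{M}$-piece contributing to $\check\mu$, and a Zastava-type defect on the ``$P$-side'' contributing to $\check\theta_2$.

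First I would establish the cover on $k$-points. Given $y$, I apply the valuative criterion of properness to the compactified strata map, built now from the closed immersion $\overline{M} \longinto \Vin_G|_{c_P}$ together with the proper compactifications $\tilde Z^{P,\check\theta_2}$ and $\tilde Z^{P^-, \check\theta_1}$ of the Zastava spaces (Subsection \ref{Zastava stratification}) in place of Drinfeld's compactifications $\tildeBun_P, \tildeBun_{P^-}$. The existence part of the criterion produces a lift of $y$ into the stratum ${}_{\check\theta_1, \check\mu, \check\theta_2}Y^{P, \check\theta}|_{c_P}$ for some triple $(\check\theta_1, \check\mu, \check\theta_2)$ with $\check\theta_1 + \check\mu + \check\theta_2 = \check\theta$, the three degrees being extracted from the local structure of $y$ near each point of failure of generic factorization.

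Next I would establish disjointness on $k$-points, using the uniqueness part of the valuative criterion together with the Weyl-module argument of Step 2 of the proof of Proposition \ref{defect stratification proposition}. That argument reconstructs the Zastava and Hecke data uniquely from the induced maps of Weyl-module vector bundles $V^\lambda_{F_G^1} \to V^\lambda_{F_G^2}$ on the curve, so no $k$-point can lie in two strata indexed by distinct triples.

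The main obstacle is the correct bookkeeping of the three degree parameters. In Proposition \ref{defect stratification proposition} the two degrees $\lambdach_1, \lambdach_2$ are canonically read off from the connected components $\pi_0(\Bun_{P^-})$ and $\pi_0(\Bun_P)$; here one must instead separate the Hecke contribution $\check\mu$ from the two Zastava contributions $\check\theta_1, \check\theta_2$ by examining the asymptotic behavior of $y$ near each defect point and distinguishing an ``$\overline{M}$-type'' singularity from a ``$\overline{G/U_P}$-type'' or ``$\overline{G/U_{P^-}}$-type'' singularity via the Weyl-module quotients and the locally closed embeddings of the Zastava spaces into $\Gr_G^{\check\theta}$. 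Once this decomposition is set up point-by-point on $X$, the argument of Proposition \ref{defect stratification proposition} transcribes with essentially no further change.
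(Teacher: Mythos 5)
Your plan has a genuine gap at its load-bearing step. You propose to apply the valuative criterion to a compactified strata map ``built from the proper compactifications $\tilde Z^{P,\check\theta_2}$ and $\tilde Z^{P^-,\check\theta_1}$ of the Zastava spaces'', but no such properness is available: the Zastava space $Z^{P,\check\theta}$ is \emph{not} proper over $X^{\check\theta}$ or over $\Gr^{\check\theta}_{M,G-pos}$ (its fibers are intersections of semi-infinite orbits in $\Gr_G$; for $P=B$ the Zastava space is even affine), and $\tilde Z^{P^-,\check\theta}$ is merely the variant of Subsection \ref{Recollections on Zastava spaces}, not a compactification of $Z^{P^-,\check\theta}$. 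Since the existence half of the valuative criterion is precisely what yields the covering statement, your properness input is unsubstantiated as written. The properness that actually drives Step 1 of the proof of Proposition \ref{defect stratification proposition} is that of the map of quotient stacks $\overline{M}/P\times P^- \to \Vin_G|_{c_P}/G\times G$ (because $\overline{M}$ is closed in $\Vin_G|_{c_P}$ and $G/P$, $G/P^-$ are proper); if you insist on re-running the argument inside the local model, you must base-change this map along $\Vin_G|_{c_P}/P\times U_{P^-} \to \Vin_G|_{c_P}/G\times G$ rather than appeal to properness of Zastava spaces. Also note that the Weyl-module argument of Step 2 is about the monomorphism property of the strata maps, not about set-theoretic disjointness, which already follows from the uniqueness half of the valuative criterion; for the present corollary, which is purely a statement on $k$-points, it is not needed at all.

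More importantly, the paper does not re-run any of this: the corollary is obtained by \emph{restriction} of Proposition \ref{defect stratification proposition}. The strata of $Y^{P,\check\theta}|_{c_P}$ are by construction induced by the same strata map $f$ of Subsection \ref{Strata maps}, i.e. they are the preimages of the strata $\sideset{_{\lambdach_1,\lambdach_2}}{_G}\VinBun|_{c_P}$ under the natural map $Y^{P,\check\theta}|_{c_P} \to \VinBun_G|_{c_P}$ coming from $\Vin_G|_{c_P}/P\times U_{P^-} \to \Vin_G|_{c_P}/G\times G$, with the extra data of a $Y$-point (the generic Bruhat trivialization) turning the $\Bun_{P^-}$- and $\Bun_P$-factors of the $\VinBun$-stratum into the open Zastava factors ${}_0Z^{P^-,\check\theta_1}$ and ${}_0Z^{P,\check\theta_2}$. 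Covering and disjointness on $k$-points are therefore inherited directly from Proposition \ref{defect stratification proposition}, and the triple $(\check\theta_1,\check\mu,\check\theta_2)$ is forced by degree bookkeeping: the pair $(\lambdach_1,\lambdach_2)$ of the image point together with the fixed total degree $\check\theta$ determines it via the indexing of the locally closed immersion displayed before the corollary. In particular, your proposed ``asymptotic analysis near each defect point'' to separate $\check\mu$ from $\check\theta_1,\check\theta_2$ is unnecessary; the separation is read off from connected components.
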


\medskip

For notational simplicity the stratum ${}_{0, \check\theta, 0}Y^{P, \check\theta}|_{c_P}$ of maximal defect $\check\mu = \check\theta$ will also be denoted by ${}_{\check\theta}Y^{P, \check\theta}|_{c_P}$. By definition we have:
$${}_{\check\theta}Y^{P, \check\theta}|_{c_P} \ \ = \ \ \Gr_{M, G-pos}^{\check\theta}$$

\bigskip

\ssec{Section and contraction for the local models}
\label{Section and contraction for the local models}

\sssec{The canonical idempotent $e_P$ in the Vinberg semigroup}
Using the section $\Fs$ of the map $\Vin_G \to T_{adj}^+$ from Subsection \ref{The canonical section} above, we define
$$e_P \ := \ \Fs(c_P) \ \in \ \Vin_G|_{c_P} \, .$$
The element $e_P$ is an idempotent for the multiplication in $\Vin_G$, i.e., it satisfies $e_P \cdot e_P = e_P$. By definition of $\Vin_{G, \geq P, strict}$, multiplication by $e_P$ in $\Vin_G$ from the right or from the left defines a map
$$\Vin_{G, \geq P, strict} \ \longto \ \Vin_G|_{c_P} \, .$$
We will use the following fact (see e.g. \cite{W1}):

\begin{lemma}
\label{multiplying by the idempotent}
The image of the map
$$\Vin_{G, \geq P, strict} \ \longto \ \Vin_G|_{c_P}$$
obtained by multiplying by $e_P$ from the left agrees with the natural embedding
$$\overline{G/U_P} \ \ \longinto \ \ \Vin_G|_{c_P}$$
from Subsection \ref{Embedding of barM into Vin_G} above. Similarly, the image of the map
$$\Vin_{G, \geq P, strict} \ \longto \ \Vin_G|_{c_P}$$
obtained by multiplying by $e_P$ from the right agrees with the natural embedding
$$\overline{G/U_{P^-}} \ \ \longinto \ \ \Vin_G|_{c_P}$$
from Subsection \ref{Embedding of barM into Vin_G} above.
\end{lemma}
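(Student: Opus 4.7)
The plan is to prove the statement for left multiplication by $e_P$; the right-multiplication case follows by the symmetric argument with the roles of $P$ and $P^-$ interchanged. I will compute the image on the dense open non-degenerate locus and then pass to the full strict locus by taking closures. The first key input is that the section $\Fs \colon T_{adj}^+ \to \Vin_G$ is a homomorphism of semigroups, since it extends the group homomorphism $T_{adj} \to G_{enh}$ given by $t \mapsto (t^{-1}, t) \mod Z_G$. Combined with the elementary fact that $c_P \cdot t = c_P$ for every $t \in T_{adj, \geq P, strict}^+$ (which is immediate from the coordinates $T_{adj}^+ = \BA^r$, using that the entries of $c_P$ are $1$ on $\CI_M$ and $0$ elsewhere while those of $t$ are $1$ on $\CI_M$), this yields
\[
e_P \cdot \Fs(t) \ = \ \Fs(c_P) \cdot \Fs(t) \ = \ \Fs(c_P \cdot t) \ = \ \Fs(c_P) \ = \ e_P
\]
for all $t \in T_{adj, \geq P, strict}^+$. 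In other words, left multiplication by $e_P$ collapses the image of the section onto the single point $\{e_P\}$.

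The main step is to identify the image on the non-degenerate locus $\sideset{_0}{_G}\Vin \cap \Vin_{G, \geq P, strict}$, which coincides with the $(G \times G)$-orbit of $\Fs(T_{adj, \geq P, strict}^+)$ and is dense in $\Vin_{G, \geq P, strict}$ (as the intersection of the dense open $\sideset{_0}{_G}\Vin$ with the irreducible $\Vin_{G, \geq P, strict}$, nonempty as it contains the section). I would write a generic point of this open subscheme in the form $(g_1, g_2) \cdot \Fs(t)$; applying left multiplication by $e_P$, semigroup associativity together with the collapsing from the first paragraph shows that the resulting element of $\Vin_G|_{c_P}$ depends only on an appropriate Bruhat coset of one of the $g_i$. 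Tracking this computation through the identification $\sideset{_0}{_G}\Vin|_{c_P} \cong (G/U_P \times G/U_{P^-})/M$ of Lemma \ref{Vinberg strata} matches the image of the restricted map with the natural embedding of $G/U_P$ from Subsection \ref{Embedding of barM into Vin_G}.

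Finally, the density of the non-degenerate locus and the separation of $\Vin_G|_{c_P}$ imply that the full image of left multiplication by $e_P$ is contained in the closure $\overline{G/U_P}$. For the reverse inclusion, I plan to combine the GIT-quotient description of Lemma \ref{GIT quotient of strict locus} with the identification of $\overline{M}$ as a closed subscheme of $\overline{G/U_P}$ from Lemma \ref{bar M lemma}(b), which upgrades the set-theoretic inclusion to a scheme-theoretic equality. The main technical obstacle will be the second step: carefully tracking the semigroup manipulation $e_P \cdot \bigl((g_1, g_2) \cdot \Fs(t)\bigr)$ through the orbit picture of Lemma \ref{Vinberg strata} and matching it correctly with the natural first-factor embedding of $G/U_P$.
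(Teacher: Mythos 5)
The paper itself gives no proof of this lemma (it is quoted from \cite{W1}), so your attempt has to be judged against what a complete argument must contain. Your first paragraph is fine: $\Fs$ is indeed a semigroup homomorphism (the identity $\Fs(t_1t_2)=\Fs(t_1)\Fs(t_2)$ holds on the dense subset $T_{adj}\times T_{adj}$ and extends by separatedness), and $c_P\cdot t=c_P$ on the strict locus, so $e_P\cdot \Fs(t)=e_P$. The genuine gap is in your main step. Writing a point of the non-degenerate strict locus as $(g_1,g_2)\cdot\Fs(t)$, i.e.\ (up to conventions) as $g_1\,\Fs(t)\,g_2^{-1}$, left multiplication gives $e_P\,g_1\,\Fs(t)\,g_2^{-1}$: here $e_P$ is adjacent to $g_1$, not to $\Fs(t)$, so associativity together with your collapsing identity yields nothing --- you cannot move $e_P$ past $g_1$, and the value genuinely depends on both $g_1$ and $g_2$ (already for $G=\SL_2$, where $e_B\,g_1\Fs(t)g_2^{-1}$ has second row zero and first row equal to the first row of $g_1\Fs(t)g_2^{-1}$). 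So the assertion that the result ``depends only on a Bruhat coset of one of the $g_i$'' is not correct as stated, and it is exactly at this point that the real content of the lemma --- whether one lands in $\overline{G/U_P}$ or in $\overline{G/U_{P^-}}$ --- gets decided. A correct version of your computation restricts to the $G$-locus of the strict locus: there the fiber of $v$ over $t$ equals $\Fs(t)\cdot G$ (since $G$ is normal in $G_{enh}$ and $\Fs(t)$ is a unit of $G_{enh}$), hence $e_P\cdot(\Fs(t)g)=e_P\cdot g$, and the image of this dense locus is exactly the single orbit $e_P\cdot G$ (resp.\ $G\cdot e_P$ for right multiplication), which by the very definition of the identification in Lemma \ref{Vinberg strata} is the image of one of the two factor embeddings of Subsection \ref{Embedding of barM into Vin_G}; matching it with the factor asserted in the lemma is precisely the bookkeeping your sketch leaves open.

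The closure step also has a gap. Continuity only gives that $e_P\cdot\Vin_{G,\geq P, strict}$ is contained in the closure of the image of the dense locus; you still must prove that every boundary point of the embedded $\overline{G/U_P}$ is actually hit. Your appeal to Lemma \ref{GIT quotient of strict locus} and Lemma \ref{bar M lemma}(b) does not address this: those statements concern $U_P\times U_{P^-}$-invariants and $\overline{M}$ and say nothing about surjectivity of multiplication by $e_P$, and ``upgrading a set-theoretic inclusion to a scheme-theoretic equality'' is not the issue, since the lemma is about images. The simple argument is idempotency plus continuity: the locus $\{y:\, e_P\cdot y=y\}$ is closed and contains the orbit $e_P\cdot G$ (because $e_P\cdot e_P=e_P$), hence contains its closure inside $\Vin_G|_{c_P}$; since $c_P\in T^+_{adj,\geq P, strict}$, that closure lies in $\Vin_{G,\geq P, strict}$ and is therefore contained in the image. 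Finally, the density you invoke requires the irreducibility of $\Vin_{G,\geq P, strict}$, which deserves a word --- it follows, e.g., from the $T_{adj}$-splitting $\Vin_{G,\geq P}\cong \Vin_{G,\geq P, strict}\times\prod_{i\in\CI_M}\BG_m$ together with the irreducibility of $\Vin_G$.
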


\medskip

\sssec{Embeddings for the local models}
\label{Embeddings for the local models}
Lemma \ref{multiplying by the idempotent} above gives rise to natural maps
$$\Vin_{G, \geq P, strict} / P \times U_{P^-} \ \ \longto \ \ (\overline{G/U_P}) / M \times U_{P^-}$$
and
$$\Vin_{G, \geq P, strict} / P \times U_{P^-} \ \ \longto \ \ (\overline{G/U_{P^-}}) / P \, .$$
Passing to mapping stacks we obtain natural maps
$$Y^{P, \check\theta} \ \longto \ Z^{P, \check\theta} \ \ \ \ \ \text{and} \ \ \ \ \ Y^{P, \check\theta} \ \longto \ \tilde Z^{P^-, \check\theta}$$
which are compatible with the projections to $\Gr_{M, G-pos}^{\check\theta}$.
Then the resulting map
$$\tau: \ \ Y^{P, \check\theta} \ \ \longto \ \ \tilde Z^{P^-, \check\theta} \underset{\Gr_{M, G-pos}^{\check\theta}}{\times} Z^{P, \check\theta} \ \times \ T_{adj, \geq P, strict}^+$$
obtained by taking the product of the above two maps to the Zastava spaces and the natural map $Y^{P, \check\theta} \to T_{adj, \geq P, strict}^+$ is a closed immersion.

\sssec{The section $\sigma$}
Part (c) of Lemma \ref{bar M lemma} and Lemma \ref{GIT quotient of strict locus} together imply that the inclusion
$\overline{M} \into \Vin_G|_{c_P}$
from Subsection \ref{Embedding of barM into Vin_G} above induces a section
$$\sigma: \ \ \Gr_{M, G-pos}^{\check\theta} \ \ \longinto \ \ Y^{P, \check\theta}$$
of the projection map
$$\pi: \ Y^{P, \check\theta} \ \longto \ \Gr_{M, G-pos}^{\check\theta} \, .$$
By construction this section maps $\Gr_{M, G-pos}^{\check\theta}$ isomorphically onto the stratum of maximal defect:
$$\sigma: \ \Gr_{M, G-pos}^{\check\theta} \ \ \stackrel{\cong}{\longto} \ \ {}_{\check\theta}Y^{P, \check\theta}|_{c_P} \ \ \longinto \ \ Y^{P, \check\theta}$$

\medskip

Alternatively, the section $\sigma$ can be constructed as follows: Define a map
$$\Gr_{M, G-pos}^{\check\theta} \ \ \longto \ \ \tilde Z^{P^-, \check\theta} \underset{\Gr_{M, G-pos}^{\check\theta}}{\times} Z^{P, \check\theta} \ \times \ T_{adj, \geq P, strict}^+$$
by choosing the map to the first factor to be the section $\sigma_{Z^-}$ from Subsection \ref{Sections for Zastava spaces} above, the map to the second factor to be the section $\sigma_Z$, and the map to the third factor to be the constant map with value $c_P \in T_{adj, \geq P, strict}^+$. Then by construction of the embedding $\tau$ from Subsection \ref{Embeddings for the local models} above, this map factors through the subspace $Y^{P, \check\theta}$ and agrees with the section $\sigma$.

\medskip

\sssec{Contracting the local model onto the section}
\label{Contracting the local model onto the section}
As in Subsection \ref{Contractions for Zastava spaces} above we fix a cocharacter $\check\nu_M: \BG_m \to Z_M \subset T$ which contracts $U_{P^-}$ to the element $1 \in U_{P^-}$, and consider the corresponding $\nu_M$-action on $Z^{P, \check\theta}$ and the corresponding $(-\check\nu_M)$-action on $\tilde Z^{P^-, \check\theta}$. Then we let $\BG_m$ act on the product
$$\tilde Z^{P^-, \check\theta} \underset{\Gr_{M, G-pos}^{\check\theta}}{\times} Z^{P, \check\theta} \ \times \ T_{adj, \geq P, strict}^+$$
as follows: We act on the first factor via the $(-\check\nu_M)$-action on $\tilde Z^{P^-, \check\theta}$, we act on the second factor via the $\nu_M$-action on $Z^{P, \check\theta}$, and we act on the third factor via the cocharacter $(-2 \check\nu_M): \BG_m \to T$ and the usual action of $T$ on $T_{adj, \geq P, strict}^+$. This $\BG_m$-action preserves the subspace $Y^{P, \check\theta}$, and in fact we have:

\medskip

\begin{lemma}
This $\BG_m$-action contracts the subspace $Y^{P, \check\theta}$ onto the section~$\sigma$, i.e.: The action map of this $\BG_m$-action extends to a map
$$\BA^1 \times Y^{P, \check\theta} \ \longto \ Y^{P, \check\theta}$$
such that the composition
$$Y^{P, \check\theta} \ = \ \{ 0 \} \times Y^{P, \check\theta} \ \longinto \ \BA^1 \times Y^{P, \check\theta} \ \longto \ Y^{P, \check\theta}$$
agrees with the composition of the projection and the section
$$Y^{P, \check\theta} \ \stackrel{\pi}{\longto} \ \Gr_{M, G-pos}^{\check\theta} \ \stackrel{\sigma}{\longto} \ Y^{P, \check\theta} \, .$$
\end{lemma}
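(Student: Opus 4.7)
The plan is to exploit the closed immersion
$$\tau: \ Y^{P, \check\theta} \ \longinto \ \tilde Z^{P^-, \check\theta} \underset{\Gr_{M, G-pos}^{\check\theta}}{\times} Z^{P, \check\theta} \, \times \, T_{adj, \geq P, strict}^+$$
from Subsection \ref{Embeddings for the local models} in order to reduce the contraction claim to an assertion about the ambient product, where each factor can be analyzed separately. The contractions on the first two factors are precisely the statements recalled in Subsection \ref{Contractions for Zastava spaces}: the $\nu_M$-action extends to an action of $\BA^1$ and contracts $Z^{P, \check\theta}$ onto the image of $\sigma_Z$, while the $(-\check\nu_M)$-action extends to $\BA^1$ and contracts $\tilde Z^{P^-, \check\theta}$ onto the image of $\sigma_{Z^-}$. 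For the third factor, I observe that $T_{adj, \geq P, strict}^+ \cong \BA^{r - r_M}$ has free coordinates $(c_i)_{i \in \CI \setminus \CI_M}$ with $c_P$ corresponding to the origin, and that the $T$-action via the cocharacter $-2\check\nu_M$ rescales $c_i$ by $s^{-2\langle \alpha_i, \check\nu_M\rangle}$. The contracting property of $\check\nu_M$ on $U_{P^-}$ is equivalent to $\langle \alpha_i, \check\nu_M\rangle < 0$ for every $i \in \CI \setminus \CI_M$, so the exponent $-2\langle \alpha_i, \check\nu_M\rangle$ is strictly positive; hence this third action also extends to $\BA^1$ and contracts $T_{adj, \geq P, strict}^+$ onto $c_P$.

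Combining the three contractions, the $\BG_m$-action on the ambient product extends to an $\BA^1$-action whose value at $0 \in \BA^1$ factors as the canonical projection onto $\Gr_{M, G-pos}^{\check\theta}$ followed by the map $(\sigma_{Z^-}, \sigma_Z, c_P)$. By the alternative construction of $\sigma$ given in Subsection \ref{Section and contraction for the local models}, this composite factors through $\tau(Y^{P, \check\theta})$ and agrees there with $\tau \circ \sigma \circ \pi$. Thus once we verify that the $\BG_m$-action preserves the closed subspace $Y^{P, \check\theta}$ inside the ambient product, the desired $\BA^1$-action on $Y^{P, \check\theta}$ and the identification of its value at $0$ with $\sigma \circ \pi$ both follow immediately.

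To verify the preservation, I would exhibit the $\BG_m$-action on the ambient product as induced, via $\tau$ and the mapping-stack construction, from a natural $\BG_m$-action on the stack $(\Vin_G)_{\geq P, strict} / P \times U_{P^-}$. Concretely, one combines right translation by $\check\nu_M(s)^{-1}$ through the first $G$-factor, left translation by $\check\nu_M(s)$ through the second $G$-factor, and the natural $T$-action on $\Vin_G$ via a suitable cocharacter in $Z_M \subset T$. Since $\check\nu_M \in Z_M \subset P$ and normalizes $U_{P^-}$, this combined action descends to the quotient by $P \times U_{P^-}$ and preserves the open Bruhat locus, and thereby induces a $\BG_m$-action on $Y^{P, \check\theta}$. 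Using Lemma \ref{multiplying by the idempotent} to describe the two projections $Y^{P, \check\theta} \to Z^{P, \check\theta}$ and $Y^{P, \check\theta} \to \tilde Z^{P^-, \check\theta}$ as arising from multiplication by the idempotent $e_P$, one then checks that the resulting action restricts on each factor of $\tau$ to the prescribed action on the ambient product. The main potential obstacle is precisely this bookkeeping step: recombining the three prescribed cocharacter actions into a single well-defined action on $(\Vin_G)_{\geq P, strict}/P \times U_{P^-}$, and in particular pinning down the correct $T$-cocharacter so that the coefficient $-2$ on the $T_{adj, \geq P, strict}^+$-factor emerges correctly from its interplay with the two $G$-translations via the projection $G_{enh} \to T_{adj}$. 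Everything else reduces to the Zastava contractions recalled earlier and to an elementary scaling computation on the affine space $T_{adj, \geq P, strict}^+$.
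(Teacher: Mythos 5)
Your proposal is correct and takes essentially the same route as the paper: reduce via the closed immersion $\tau$ to the ambient product $\tilde Z^{P^-, \check\theta} \underset{\Gr_{M, G-pos}^{\check\theta}}{\times} Z^{P, \check\theta} \times T_{adj, \geq P, strict}^+$, invoke the Zastava contractions of Subsection \ref{Contractions for Zastava spaces}, and observe that $\langle -2\check\nu_M, \alpha_i\rangle$ is zero for $i \in \CI_M$ and strictly positive for $i \notin \CI_M$, so the third factor contracts onto $c_P$. The preservation of $Y^{P,\check\theta}$ that you single out as the remaining bookkeeping is in fact only asserted, not proved, in Subsection \ref{Contracting the local model onto the section} of the paper, so your sketch of it via an action on $(\Vin_G)_{\geq P, strict}/P\times U_{P^-}$ goes slightly beyond what the paper records and is consistent with it.
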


\medskip

\begin{proof}
Since the map $\tau$ from Subsection \ref{Embeddings for the local models} above is a closed immersion and is compatible with the projection maps to $\Gr_{M, G-pos}^{\check\theta}$, it suffices to show that the action contracts the ambient space
$$\tilde Z^{P^-, \check\theta} \underset{\Gr_{M, G-pos}^{\check\theta}}{\times} Z^{P, \check\theta} \ \times \ T_{adj, \geq P, strict}^+$$
onto the section $\sigma$. By Subsection \ref{Contractions for Zastava spaces} above we only have to show that the action of $\BG_m$ on $T_{adj}^+$ via the composition $\BG_m \stackrel{-2 \check\nu_M}{\longto} T \longto T_{adj}^+$ contracts $T_{adj, \geq P, strict}^+$ onto the point $c_P \in T_{adj, \geq P, strict}^+$. To see this, let $i \in \CI$. Then for $i \in \CI_M$ the integer $< - 2 \check\nu_M, \alpha_i>$ is equal to $0$ since $\check\nu_M$ factors through the center of $M$; if $i \notin \CI_M$ the integer $<-2\check\nu_M, \alpha_i>$ is positive since $\check\nu_M$ contracts $U_{P^-}$, as desired.
\end{proof}

\bigskip
\bigskip
\bigskip

\section{Proofs II --- Sheaves}
\label{Proofs II --- Sheaves}

\ssec{Restatement of geometric theorems for the local models}

By the exact same argument as in \cite{BFGM}, \cite{BG2}, or \cite{Sch1}, it suffices to prove the theorems stated in Section \ref{Statements of theorems -- Geometry} above on the level of the local models. For the convenience of the reader, we now restate the theorems in the notation of the local models:

\medskip

\sssec{Nearby cycles theorem}
\label{Nearby cycles theorem}
As above we fix a parabolic $P$ of $G$ and consider the line $L_P = \BA^1 \into T_{adj}^+ = \BA^r$
passing through the points $c_G$ and $c_P$ of $T_{adj}^+ = \BA^r$, identifying the point $1 \in \BA^1$ with the point $c_G$ and the point $0 \in \BA^1$ with the point $c_P$. We denote by $Y^{P, \check\theta}|_{L_P}$ the restriction of the family $Y^{P, \check\theta} \to T_{adj, \geq P, strict}^+$ to the line $L_P = \BA^1$ and by $\Psi_P \in D(Y^{P, \check\theta}|_{c_P})$ the corresponding nearby cycles of the IC-sheaf
$$\IC_{Y^{P, \check\theta}|_{L_P \setminus \{ 0 \} }} \ \ = \ \ \Qellbar[\dim Y^{P, \check\theta}|_{L_P \setminus \{ 0 \} }](\tfrac{1}{2} \dim Y^{P, \check\theta}|_{L_P \setminus \{ 0 \} } \, .$$
of its $G$-locus
Next recall from Subsection \ref{The G-positive affine Grassmannian for M} above that for any $\check\theta \in \Lambdach_{G,P}^{pos}$ the fiber of the forgetful map
$$\CH^{-\check\theta, 0}_{M, G-pos} \ \longto \ \Bun_{M,0}$$
over the trivial bundle in $\Bun_{M,0}$ naturally identifies with $\Gr_{M, G-pos}^{\check\theta}$. We will denote the corresponding version of the complex $\widetilde \Omega_P$ on the space $\Gr_{M, G-pos}^{\check\theta}$ from Subsection \ref{The complex tilde-Omega_P} above by $\widetilde\Omega_P^{\check\theta}$; i.e., we define the complex $\widetilde\Omega_P^{\check\theta}$ on $\Gr_{M, G-pos}^{\check\theta}$ as the pushforward
$$\widetilde \Omega_P^{\check\theta} \ \ := \ \ \pi_{Z,!} \, \bigl( \IC_{{}_0Z^{P, \check\theta}} \bigr) \, .$$
Then to prove Theorem \ref{nearby cycles theorem} above we have to show:

\medskip

\begin{theorem}
\label{nearby cycles theorem for local models}
The $*$-restriction of $\Psi_P$ to the stratum of maximal defect
$$\Gr_{M, G-pos}^{\check\theta} \ \ = \ \ {}_{\check\theta}Y^{P, \check\theta}|_{c_P} \ \ \ \longinto \ \ \ Y^{P, \check\theta}|_{c_P}$$
of $Y^{P, \check\theta}|_{c_P}$ is equal to the complex $\BD \, \widetilde\Omega_P^{\check\theta}$.
\end{theorem}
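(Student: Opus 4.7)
The approach adapts the blueprint of \cite{Sch1} for $G = \SL_2$, exploiting the two principal structural features of the local model: the contracting $\BG_m$-action constructed in Subsection \ref{Contracting the local model onto the section}, and the factorization in families of Lemma \ref{factorization in families}.

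First I would choose the auxiliary cocharacter $\check\nu_M: \BG_m \to Z_M$ so that the pairings $\langle \check\nu_M, \alpha_i \rangle$ take a common value on all $i \in \CI \setminus \CI_M$. With this normalization the $\BG_m$-action on the total space $Y^{P, \check\theta}$ preserves the line $L_P \subset T_{adj}^+$ and scales it by a strictly positive weight $t^N$, so that $L_P$ is contracted onto $c_P$. The family $Y^{P, \check\theta}|_{L_P} \to L_P$ then becomes $\BG_m$-equivariant, and consequently the nearby cycles sheaf $\Psi_P$ naturally carries a $\BG_m$-monodromic structure on the special fiber $Y^{P, \check\theta}|_{c_P}$.

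Next, applying the contraction principle to this monodromic sheaf on $Y^{P, \check\theta}|_{c_P}$, which by Subsection \ref{Contracting the local model onto the section} is contracted onto $\sigma(\Gr_{M, G-pos}^{\check\theta})$ via the retraction $\pi$, yields an isomorphism
\begin{equation*}
\sigma^* \Psi_P \ \cong \ \pi_* \Psi_P
\end{equation*}
of complexes on $\Gr_{M, G-pos}^{\check\theta}$. The problem thus reduces to identifying $\pi_* \Psi_P$ with $\BD \widetilde\Omega_P^{\check\theta} = \pi_{Z,*} \IC_{{}_0Z^{P, \check\theta}}$. By Remarks \ref{triviality of fiber bundles for local models} and \ref{fiber of local model over 1}, the generic fiber $Y^{P, \check\theta}|_{L_P \setminus \{0\}}$ is canonically trivialized as $(L_P \setminus \{0\}) \times {}_0Z^{P, \check\theta}$, with $\pi$ acting as the identity on the first factor and as the Zastava projection $\pi_Z$ on the second. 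Commuting $\pi_*$ past the nearby cycles functor would therefore yield the required identity.

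The main obstacle is precisely this commutation, since $\pi$ is typically not proper. I would circumvent the non-properness by first invoking Lemma \ref{factorization in families} to localize the problem over $X^{\check\theta}$ and reduce to the case where the defect divisor is supported at a single point of $X$, and then using the closed embedding $\tau$ from Subsection \ref{Embeddings for the local models} to realize $Y^{P, \check\theta}$ inside the product $\tilde Z^{P^-, \check\theta} \times_{\Gr_{M, G-pos}^{\check\theta}} Z^{P, \check\theta} \times T_{adj, \geq P, strict}^+$. The Zastava contractions of Subsection \ref{Contractions for Zastava spaces}, together with the properness and semismallness results for the Zastava spaces established in \cite{BFGM}, then control the behaviour of $\pi$ away from its proper locus and make the commutation legitimate, completing the identification of $\sigma^* \Psi_P$ with $\BD \widetilde\Omega_P^{\check\theta}$ and thus the proof.
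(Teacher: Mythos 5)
Your opening moves are sound and parallel the paper's: with $\check\nu_M$ normalized so that the weights $\langle \check\nu_M, \alpha_i \rangle$ agree for $i \notin \CI_M$, the contracting $\BG_m$-action preserves $Y^{P,\check\theta}|_{L_P}$ and lifts the scaling of $L_P$, so $\Psi_P$ is indeed $\BG_m$-monodromic and the contraction principle gives $\sigma^*\Psi_P \cong \pi_*\Psi_P$. The genuine gap is at the very step you single out as the crux: identifying $\pi_*\Psi_P$ with $\pi_{Z,*}\,\IC_{{}_0Z^{P,\check\theta}}$ by ``commuting $\pi_*$ past nearby cycles.'' The map $\pi$ (on the $G$-locus, $\id \times \pi_Z$ under the trivialization) is not proper --- ${}_0Z^{P,\check\theta} \to \Gr_{M,G-pos}^{\check\theta}$ is in no sense proper --- and nearby cycles simply do not commute with non-proper pushforward. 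None of the ingredients you invoke supplies such a commutation: factorization only localizes the defect divisor; the closed embedding $\tau$ changes nothing about the (non-)properness of $\pi$; and the properness and semismallness results of \cite{BFGM} concern $\tildeBun_P \to \Bun_G$ and central fibers of the full Zastava spaces, not the projection of the open Zastava space to $\Gr_{M,G-pos}$. Nor can one appeal to the commutation of nearby cycles with hyperbolic localization, since here the $\BG_m$-action does not act fiberwise over $L_P$ but rescales it. As written, ``make the commutation legitimate'' is an assertion, not an argument.

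The paper's proof avoids this commutation altogether, and the order of operations is the whole point. One first replaces the nearby cycles by the $*$-extension via ``Koszul duality for nearby cycles'': for the monodromic (hence unipotent) situation at hand, $\sigma^*\Psi_P \cong \sigma^* \, j_{G,*}\,\IC_{Y^{P,\check\theta}|_{L_P\setminus\{0\}}}[-1](-\tfrac{1}{2}) \otimes_{H^*(L_P\setminus\{0\},\,\Qellbar)} \Qellbar$. The contraction principle is then applied to the monodromic sheaf $j_{G,*}\,\IC$ on the total space $Y^{P,\check\theta}|_{L_P}$, converting $\sigma^*$ into $\pi_*$; since $\pi_*\,j_{G,*} = \pi_{G,*}$ and $Y^{P,\check\theta}|_{L_P\setminus\{0\}} \cong {}_0Z^{P,\check\theta} \times (L_P\setminus\{0\})$, this pushforward is computed explicitly, and the resulting factor of $H^*(L_P\setminus\{0\},\Qellbar)$ is cancelled by the Koszul-duality tensor, yielding $\BD\,\widetilde\Omega_P^{\check\theta}$. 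To salvage your route you would have to prove a commutation statement of exactly this strength, which is in effect what the Koszul-duality manipulation accomplishes; so either supply that argument or restructure the proof as above.
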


\bigskip

\sssec{The $*$-extension of the constant sheaf}
\label{*-extension of the constant sheaf for local models}
We begin with a basic lemma needed to reduce the proof of Theorem \ref{ij} to a version for the local models. To state it, let $\VinBun_{G, \geq P, strict}$ denote the restriction of the family $\VinBun_G \to T_{adj}^+$ to the closed subvariety
$$T_{adj, \geq P}^+ \ \longinto \ T_{adj}^+ = \BA^r \, .$$
We denote by $\VinBun_{G, \geq P, strict, G}$ the $G$-locus of this family, i.e., the restriction of this family to the open stratum $T_{adj, \geq P, strict, G}^+$ of $T_{adj, \geq P, strict}^+$ introduced in Subsection \ref{Strict P-loci} above. Let
$$j_{\geq P, G, strict}: \ \ \VinBun_{G, \geq P, strict, G} \ \longinto \ \VinBun_{G, \geq P, strict}$$
denote the corresponding open inclusion, and as before let $j_G$ denote the open inclusion
$$j_G: \ \VinBun_{G,G} \ \longinto \ \VinBun_G \, .$$
For the purpose of stating the lemma we denote by $i_{P, \lambdach_1, \lambdach_2}$ the inclusion of the stratum
$$i_{P, \lambdach_1, \lambdach_2}: \ \ \sideset{_{\lambdach_1, \lambdach_2}}{_G}\VinBun|_{c_P} \ \ \longinto \ \ \VinBun_G|_{c_P} \, .$$
Then we have:

\medskip

\begin{lemma}
\label{lemma for *-extension for local models}
$$i_{P, \lambdach_1, \lambdach_2}^* \, j_{G,*} \, (\Qellbar)_{\VinBun_{G,G}} \ \ = \ \ i_{P, \lambdach_1, \lambdach_2}^* \, j_{\geq P, G, strict, *} \, (\Qellbar)_{\VinBun_{G, \geq P, strict, G}}$$
\end{lemma}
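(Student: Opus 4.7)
The plan is to exploit the $T_{adj}$-action on $\VinBun_G$ from Remark \ref{T_adj-action on VinBun_G} to trivialize the family over $T_{adj, \geq P}^+$ transverse to the strict locus, and then invoke K\"unneth. Since the stratum $\sideset{_{\lambdach_1, \lambdach_2}}{_G}\VinBun|_{c_P}$ sits over $c_P \in T_{adj, \geq P}^+$, and the formation of $j_{G,*}(\Qellbar)$ and its $*$-restriction along $i_{P, \lambdach_1, \lambdach_2}$ is Zariski-local on $\VinBun_G$, I would first replace $\VinBun_G$ throughout by its restriction $\VinBun_{G, \geq P}$ over the open $T_{adj, \geq P}^+ \subset T_{adj}^+$.

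Next, consider the subtorus $\prod_{i \in \CI_M} \BG_m \hookrightarrow T_{adj}$ corresponding to the simple roots indexed by $\CI_M$. It acts simply transitively on $T_{adj,P}^+$, and more generally its action on $T_{adj, \geq P}^+$ produces a canonical product decomposition
$$T_{adj, \geq P}^+ \ \cong \ T_{adj, \geq P, strict}^+ \times T_{adj,P}^+$$
under which the inclusion of the strict locus corresponds to the slice $T_{adj, \geq P, strict}^+ \times \{1\}$. Applying Remark \ref{T_adj-action on VinBun_G} to either of the two $T_{adj}$-factors lifts this action to $\VinBun_{G, \geq P}$ compatibly with the equivariant map to $T_{adj, \geq P}^+$, and by the same argument yields a trivialization
$$\VinBun_{G, \geq P} \ \cong \ \VinBun_{G, \geq P, strict} \times T_{adj,P}^+$$
in which $\VinBun_{G, \geq P, strict}$ is the slice at $1 \in T_{adj,P}^+$. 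Since the defining condition of the $G$-locus involves only the coordinates $c_i$ for $i \notin \CI_M$, its image under this trivialization is precisely the product $\VinBun_{G, \geq P, strict, G} \times T_{adj,P}^+$.

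Consequently, the K\"unneth formula for $*$-pushforward along the open inclusion, or equivalently smooth base change along the projection to $T_{adj,P}^+$, gives
$$j_{G,*}(\Qellbar_{\VinBun_{G, \geq P, G}}) \ = \ j_{\geq P, G, strict, *}(\Qellbar_{\VinBun_{G, \geq P, strict, G}}) \boxtimes \Qellbar_{T_{adj,P}^+} \, .$$
$*$-restricting this identity along the slice inclusion $\VinBun_{G, \geq P, strict} \hookrightarrow \VinBun_{G, \geq P}$ eliminates the trivial K\"unneth factor, and further restriction along $i_{P, \lambdach_1, \lambdach_2}$ then delivers the lemma. The only point of any substance to verify is that the lifted $\prod_{i \in \CI_M} \BG_m$-action on $\VinBun_{G, \geq P}$ is free in the appropriate sense so that the trivialization actually exists; but this is automatic from the equivariance of $\VinBun_G \to T_{adj}^+$ combined with the freeness on the base, so no serious obstacle arises.
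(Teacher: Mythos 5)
Your proposal is correct and follows essentially the same route as the paper: reduce to the open substack $\VinBun_{G,\geq P}$ (open base change), lift the $\prod_{i \in \CI_M}\BG_m \subset T_{adj}$-action of Remark \ref{T_adj-action on VinBun_G} to split $\VinBun_{G,\geq P}$ as $\VinBun_{G,\geq P, strict} \times \prod_{i \in \CI_M}(\BA^1\setminus\{0\})$ compatibly with the $G$-loci and the fiber over $c_P$, and conclude by K\"unneth/smooth base change along the torus factor. The paper leaves the final K\"unneth step implicit, but otherwise the arguments coincide.
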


\medskip

\begin{proof}
Using analogous notation to above, consider the open inclusion
$$j_{\geq P, G}: \ \ \VinBun_{G, G} \ \longinto \ \VinBun_{G, \geq P} \, .$$
Then since $\VinBun_{G, \geq P}$ is an open substack of $\VinBun_G$ containing both the open substack $\VinBun_{G,G}$ and the locus $\VinBun_{G,P}$ we have:
$$i_{P, \lambdach_1, \lambdach_2}^* \, j_{G,*} \, (\Qellbar)_{\VinBun_{G,G}} \ \ = \ \ i_{P, \lambdach_1, \lambdach_2}^* \, j_{\geq P, G, *} \, (\Qellbar)_{\VinBun_{G, G}}$$
Next observe that $T_{adj, \geq P}^+$ by definition splits as a product
$$T_{adj, \geq P}^+ \ \ =  \ \ T_{adj, \geq P, strict}^+ \ \times \prod_{i \in \CI_M} \BA^1 \setminus \{ 0 \} \, ,$$
and consider the $T_{adj}$-action on $\VinBun_G$ from Subsection \ref{The T_adj-action on VinBun_G} above lifting the $T_{adj}$-action on $T_{adj}^+$. Then the subgroup
$$\prod_{i \in \CI_M} \BG_m \ \ \longinto \ \ \prod_{i \in \CI} \BG_m \ = \ T_{adj}$$
acts simply transitively on the second factor in the above product decomposition. Lifting the action of this subgroup to $\VinBun_{G, \geq P}$ then yields a product decomposition
$$\VinBun_{G, \geq P} \ \ = \ \ \VinBun_{G, \geq P, strict} \ \times \prod_{i \in \CI_M} \BA^1 \setminus \{ 0 \} \, .$$
This product decomposition identifies the open substack $\VinBun_{G,G}$ of the left hand side with the open substack
$$\VinBun_{G, \geq P, strict, G} \ \times \prod_{i \in \CI_M} \BA^1 \setminus \{ 0 \}$$
of the right hand side, and the fiber $\VinBun_G|_{c_P}$ with the closed substack
$$\VinBun_G|_{c_P} \times \prod_{i \in \CI_M} \{ 1 \} \, ;$$
this implies the claim.
\end{proof}

\medskip

By Lemma \ref{lemma for *-extension for local models} above the assertion of Theorem \ref{ij} now reduces to the following analog for the local models:

\medskip

\begin{theorem}
\label{ij for the local models}
The $*$-restriction of the $*$-extension $j_{G,*} \, \IC_{Y^{P, \check\theta}_G}$ to the stratum of maximal defect
$$\Gr_{M, G-pos}^{\check\theta} \ = \ {}_{\check\theta}Y^{P, \check\theta}|_{c_P} \ \longinto \ Y^{P, \check\theta}$$
of the fiber $\VinBun_G|_{c_P}$ is equal to the complex
$$\BD \, \widetilde\Omega_P^{\check\theta} \, \otimes \, H^*(T_{adj, \geq P, strict, G}^+, \Qellbar) [r-r_M](\tfrac{r-r_M}{2}) \ \ =$$
$$= \ \ \BD \, \widetilde\Omega_P^{\check\theta} \, \otimes \, H^*((\BA^1 \setminus \{0\})^{r-r_M}, \Qellbar) [r-r_M](\tfrac{r-r_M}{2}) \, .$$
\end{theorem}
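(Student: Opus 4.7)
The plan is to exploit the $\BG_m$-contraction of $Y^{P, \check\theta}$ onto the section $\sigma \colon \Gr_{M, G-pos}^{\check\theta} \hookrightarrow Y^{P, \check\theta}$ constructed in Subsection \ref{Contracting the local model onto the section}, together with the contraction principle of Drinfeld--Gaitsgory: for any $\BG_m$-monodromic complex $\CF$ on $Y^{P, \check\theta}$, there is a canonical isomorphism
$$\sigma^* \CF \ \cong \ \pi_* \CF \, ,$$
where we use that the attracting retraction coincides with $\sigma \circ \pi$ because $\check\nu_M$ is central in $M$ and hence acts trivially on $\Gr_{M, G-pos}^{\check\theta}$, placing the image of $\sigma$ pointwise in the $\BG_m$-fixed locus. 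To apply this to $\CF = j_{G, *} \IC_{Y^{P, \check\theta}_G}$, I first verify $\BG_m$-equivariance: since the action on $T^+_{adj, \geq P, strict}$ (via $-2 \check\nu_M$) scales only the coordinates indexed by $i \notin \CI_M$, it preserves the open subvariety $T^+_{adj, \geq P, strict, G} \cong (\BA^1 \setminus \{0\})^{r - r_M}$, so the open inclusion $j_G$ is $\BG_m$-equivariant and $j_{G,*} \IC_{Y^{P, \check\theta}_G}$ inherits a canonical equivariant structure.

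With equivariance in hand, the contraction principle reduces the theorem to computing
$$\pi_* \, j_{G,*} \, \IC_{Y^{P, \check\theta}_G} \ = \ (\pi \circ j_G)_* \, \IC_{Y^{P, \check\theta}_G} \, .$$
To evaluate this, I would combine Remark \ref{fiber of local model over 1} with the lift of the simply transitive $\prod_{i \notin \CI_M} \BG_m$-action from Remark \ref{triviality of fiber bundles for local models} to produce a trivialization
$$Y^{P, \check\theta}_G \ \cong \ {}_0 Z^{P, \check\theta} \, \times \, T^+_{adj, \geq P, strict, G} \, .$$
Crucially, this trivialization is compatible with $\pi$ because by Lemma \ref{GIT quotient of strict locus} the projection $\pi$ factors through the GIT quotient by $U_P \times U_{P^-}$, which is invariant under the scaling torus used to trivialize. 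Consequently, under the product decomposition, $\pi \circ j_G$ is identified with $\pi_Z \circ \mathrm{pr}_1$, and $\IC_{Y^{P, \check\theta}_G} \cong \IC_{{}_0 Z^{P, \check\theta}} \boxtimes \IC_{T^+_{adj, \geq P, strict, G}}$.

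Pushing forward first along $\mathrm{pr}_1$ gives
$$\IC_{{}_0Z^{P, \check\theta}} \ \otimes \ H^*(T^+_{adj, \geq P, strict, G}, \Qellbar) \, [r - r_M](\tfrac{r - r_M}{2}) \, ,$$
and then applying $\pi_{Z,*}$, together with the identity $\pi_{Z,*} \IC_{{}_0Z^{P, \check\theta}} = \BD \, \widetilde \Omega_P^{\check\theta}$ (a consequence of the self-duality $\BD \IC = \IC$ and $\BD \pi_{Z,!} = \pi_{Z,*} \BD$), yields the formula asserted in the theorem. The hard part, as in \cite{Sch1}, is setting up the contraction principle rigorously for monodromic complexes on Artin stacks and verifying that $j_{G,*} \IC$ satisfies the equivariance hypothesis; a secondary but essential technical point is the compatibility of the trivialization with $\pi$, which is reduced to the invariance of the GIT quotient of Lemma \ref{GIT quotient of strict locus} under the scaling torus.
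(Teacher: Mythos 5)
Your proposal is correct and follows essentially the same route as the paper: apply the contraction principle for the $\BG_m$-action contracting $Y^{P,\check\theta}$ onto the section $\sigma$ to replace $\sigma^*$ by $\pi_*$, then use the trivialization $Y^{P,\check\theta}_G \cong {}_0Z^{P,\check\theta} \times T^+_{adj,\geq P,strict,G}$ from Remarks \ref{triviality of fiber bundles for local models} and \ref{fiber of local model over 1} and the K\"unneth decomposition, together with $\pi_{Z,*}\IC_{{}_0Z^{P,\check\theta}} = \BD\,\widetilde\Omega_P^{\check\theta}$. The additional checks you spell out (equivariance of $j_{G,*}\IC$ and compatibility of the trivialization with $\pi$ via Lemma \ref{GIT quotient of strict locus}) are left implicit in the paper but are consistent with its argument.
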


\bigskip

\ssec{Proof of Theorem \ref{*-extension of the constant sheaf for local models}}

In Subsection \ref{Section and contraction for the local models} above we have constructed a $\BG_m$-action which contracts the local model $Y^{P, \check\theta}$ onto the section
$$\sigma: \ \Gr_{M, G-pos}^{\check\theta} \ \ \stackrel{\cong}{\longto} \ \ {}_{\check\theta}Y^{P, \check\theta}|_{c_P} \ \ \longinto \ \ Y^{P, \check\theta}$$
of the projection map
$$\pi: \ Y^{P, \check\theta} \ \longto \ \Gr_{M, G-pos}^{\check\theta} \, .$$
In this setting, the well-known \textit{contraction principle} (see for example \cite[Sec. 3]{Br} or \cite[Sec. 5]{BFGM}) for contracting $\BG_m$-actions states:

\medskip

\begin{lemma}
\label{contraction principle}
For any $\BG_m$-monodromic object $F \in D(Y^{P, \check\theta})$ there exists a natural isomorphism
$$\sigma^* F \ \cong \ \pi_* F \, .$$
\end{lemma}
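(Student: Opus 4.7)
The plan is to prove the lemma by applying the standard \emph{contraction principle} for $\BG_m$-monodromic sheaves (see \cite[Sec.~3]{Br} or \cite[Sec.~5]{BFGM}) to the contracting action $a : \BA^1 \times Y^{P, \check\theta} \to Y^{P, \check\theta}$ constructed in Subsection~\ref{Contracting the local model onto the section}.

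First I would construct the natural comparison map. Since $\sigma$ is a section of $\pi$, we have $\pi \circ \sigma = \id$ and hence $\sigma^* \pi^* = \id$. Applying $\sigma^*$ to the counit $\pi^* \pi_* F \to F$ of the adjunction $\pi^* \dashv \pi_*$ thus produces a canonical map
$$\pi_* F \; = \; \sigma^* \pi^* \pi_* F \; \longrightarrow \; \sigma^* F \, ,$$
and the remaining task is to show that this map is an isomorphism.

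To prove this I would use the contraction $a$ as a "homotopy" between $\id$ and $\sigma \circ \pi$. Let $q : \BA^1 \times Y^{P, \check\theta} \to \BA^1$ denote the projection and let $i_0, i_1 : Y^{P, \check\theta} \hookrightarrow \BA^1 \times Y^{P, \check\theta}$ be the inclusions of the fibers at $t = 0$ and $t = 1$. Base change along these inclusions identifies $i_0^* a^* F = \pi^* \sigma^* F$ (using $a \circ i_0 = \sigma \circ \pi$) and $i_1^* a^* F = F$ (using $a \circ i_1 = \id$). Using that $\pi$ is $\BG_m$-invariant, so that $\pi \circ a$ factors through the second projection $p_2$, one obtains a complex over $\BA^1 \times \Gr_{M, G-pos}^{\check\theta}$ constructed from $a^* F$ whose restrictions to $\{0\} \times \Gr_{M, G-pos}^{\check\theta}$ and $\{1\} \times \Gr_{M, G-pos}^{\check\theta}$ recover $\sigma^* F$ and $\pi_* F$ respectively.

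The heart of the argument is then to show that this family over $\BA^1$ is constant, so that the two stalks agree. This follows from the $\BG_m$-monodromicity of $a^* F$---which is automatic from the monodromicity of $F$ and the $\BG_m$-equivariance of $a$ with respect to the scaling action on $\BA^1$---combined with the standard fact that a $\BG_m$-monodromic complex on $\BA^1$ has canonically isomorphic $*$-stalks at $0$ and at any point of $\BG_m$. The main obstacle, as is typical in the contraction principle, lies in carefully tracking the monodromicity conditions and their compatibility with the various pullback and pushforward operations in the family over $\BA^1$; this is however a formal exercise within the formalism of $\BG_m$-monodromic sheaves used in the cited references.
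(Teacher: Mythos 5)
You are proving a statement the paper itself does not prove: Lemma \ref{contraction principle} is quoted as the well-known contraction principle, with references to \cite[Sec.~3]{Br} and \cite[Sec.~5]{BFGM}, so a citation is all that is expected there. Your construction of the comparison map is fine: applying $\sigma^*$ to the counit $\pi^*\pi_*F \to F$ and using $\pi\circ\sigma=\id$ does give a natural arrow $\pi_*F\to\sigma^*F$. The problem is the mechanism you propose for proving that this arrow is an isomorphism. The ``standard fact'' you invoke --- that a $\BG_m$-monodromic complex on $\BA^1$ has canonically isomorphic $*$-stalks at $0$ and at a point of $\BG_m$ --- is false: the extension by zero $j_!\Qellbar_{\BG_m}$ of the constant sheaf on $\BG_m\subset\BA^1$ is monodromic (even with unipotent monodromy), yet its $*$-stalk at $0$ vanishes while its $*$-stalk at $1$ is $\Qellbar$. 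Monodromic complexes on $\BA^1$ are not constant families, so the constancy of your interpolating family over $\BA^1\times\Gr_{M, G-pos}^{\check\theta}$ does not follow from monodromicity --- and that constancy is exactly the heart of the matter, not a formal bookkeeping exercise. The correct one-dimensional statement underlying the contraction principle is of a different nature: for a monodromic complex $M$ on $\BA^1$ one has $R\Gamma(\BA^1,M)\cong M|^*_0$, i.e.\ pushforward along the contracted direction computes the stalk at the fixed point; it is this statement, applied along the fibers of the contraction (as in the hyperbolic localization argument of \cite{Br}), and not a comparison of stalks at $0$ and $1$, that makes the principle work.

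There is a second, smaller gap in the identification of the endpoints of your ``homotopy''. If the family on $\BA^1\times\Gr_{M, G-pos}^{\check\theta}$ is $(\id_{\BA^1}\times\pi)_*\,a^*F$ (the natural candidate, using $\pi\circ a=\pi\circ p_2$), then its restriction to $\{1\}\times\Gr_{M, G-pos}^{\check\theta}$ is compared to $\pi_*F$ only through a base-change map along the non-proper morphism $\pi$, which need not be an isomorphism; and at $\{0\}$ one obtains $\pi_*\pi^*\sigma^*F$ rather than $\sigma^*F$. So even the two fibers of the proposed family are not what you claim without further argument. In short: the natural map is constructed correctly, but the proof that it is an isomorphism must be replaced by the genuine contraction/hyperbolic-localization argument of \cite{Br} or \cite[Sec.~5]{BFGM}, or simply by the citation the paper itself uses.
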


\medskip

We can now complete the proof of Theorem \ref{*-extension of the constant sheaf for local models}:

\medskip

\begin{proof}[Proof of Theorem \ref{*-extension of the constant sheaf for local models}]
Denote by
$$\pi_G: \ Y^{P, \check\theta}_G \ \longto \ \Gr_{M, G-pos}^{\check\theta}$$
the restriction of the projection $\pi$ to the $G$-locus $Y^{P, \check\theta}_G$ of $Y^{P, \check\theta}$. Applying the contraction principle to the $\BG_m$-equivariant sheaf $j_{G,*} \, \IC_{Y^{P, \check\theta}_G}$, we compute the desired restriction as
$$\sigma^* \, j_{G,*} \, \IC_{Y^{P, \check\theta}_G} \ \ = \ \ \pi_* \, j_{G,*} \, \IC_{Y^{P, \check\theta}_G} \ \ = \ \ \pi_{G,*} \, \IC_{Y^{P, \check\theta}_G} , .$$
But Remark \ref{triviality of fiber bundles for local models} and Remark \ref{fiber of local model over 1} show that
$$Y^{P, \check\theta}_G \ \ \cong \ \ {}_0Z^{P, \check\theta} \ \times \ T_{adj, \geq P, strict, G}^+$$
as spaces over $T_{adj, \geq P, strict, G}^+$. Thus we conclude that the desired restriction is equal to
$$\pi_{G,*} \, \IC_{Y^{P, \check\theta}_G} \ \ = \ \ \pi_{G,*} \, (\IC_{{}_0Z^{P, \check\theta}} \ \boxtimes \ T_{adj, \geq P, strict, G}^+) \ \ =$$
$$= \ \ \pi_{Z,*} \, \IC_{{}_0Z^{P, \check\theta}} \ \otimes \ H^*(T_{adj, \geq P, strict, G}^+, \IC_{T_{adj, \geq P, strict, G}^+}) \ \ = $$
$$= \ \ \BD \, \widetilde\Omega_P^{\check\theta} \, \otimes \, H^*(T_{adj, \geq P, strict, G}^+, \Qellbar) [r-r_M](\tfrac{r-r_M}{2}) \, .$$
\end{proof}

\bigskip

\ssec{Proof of Theorem \ref{nearby cycles theorem for local models}}

\begin{proof}[Proof of Theorem \ref{nearby cycles theorem for local models}]

For the purpose of the proof we denote the inclusion of the special fiber $Y^{P, \check\theta}|_{c_P}$ of the one-parameter family
$$Y^{P, \check\theta}|_{L_P} \ \ \longto \ \ L_P \ = \ \BA^1$$
by
$$i_P: \ Y^{P, \check\theta}|_{c_P} \ \longinto \ Y^{P, \check\theta}|_{L_P}$$
the inclusion of its $G$-locus by
$$j_G: \ Y^{P, \check\theta}|_{L_P \setminus \{ 0 \} } \ \longinto \ Y^{P, \check\theta}|_{L_P} \, ,$$
and the projection map of its $G$-locus by
$$\pi_G: \ Y^{P, \check\theta}|_{L_P \setminus \{ 0 \} } \ \longto \ \Gr_{M, G-pos}^{\check\theta} \, .$$
Recall furthermore that
$$Y^{P, \check\theta}|_{L_P \setminus \{ 0 \} } \ \ \ \cong \ \ \ {}_0Z^{P, \check\theta} \ \times \ (L_P \setminus \{ 0 \})$$
as spaces over $L_P \setminus \{ 0 \}$.
Finallly, observe that the $\BG_m$-action on $Y^{P, \check\theta}$ from Subsection \ref{Contracting the local model onto the section} above preserves the one-parameter family $Y^{P, \check\theta}|_{L_P}$ and hence contracts it onto the stratum of maximal defect
$$\Gr_{M, G-pos}^{\check\theta} \ \ = \ \ {}_{\check\theta}Y^{P, \check\theta}|_{c_P} \ \ \ \longinto \ \ \ Y^{P, \check\theta}|_{c_P} \, ;$$
we will hence be able to apply the contraction principle stated in Lemma \ref{contraction principle} above. To compute $\sigma^* \Psi_P$ we first apply Koszul duality for nearby cycles and find
$$\sigma^* \, \Psi_P \ \ = \ \ \sigma^* \, i_P^* \, j_{G,*} \, \IC_{Y^{P, \check\theta}|_{L_P \setminus \{ 0 \} }}[-1](-\tfrac{1}{2}) \ \underset{H^*(L_P \setminus \{ 0 \}, \Qellbar)}{\otimes} \ \Qellbar \ \ =$$
$$= \ \ \sigma^* \, j_{G,*} \, \IC_{Y^{P, \check\theta}|_{L_P \setminus \{ 0 \} }}[-1](-\tfrac{1}{2}) \ \underset{H^*(L_P \setminus \{ 0 \}, \Qellbar)}{\otimes} \ \Qellbar \ .$$
Applying the contraction principle from Lemma \ref{contraction principle} above to the $\BG_m$-equivariant sheaf $j_{G,*} \, \IC_{Y^{P, \check\theta}|_{L_P \setminus \{ 0 \} }}$ we therefore compute
$$\sigma^* \, \Psi_P \ \ = \ \ \pi_* \, j_{G,*} \, \IC_{Y^{P, \check\theta}|_{L_P \setminus \{ 0 \} }}[-1](-\tfrac{1}{2}) \ \underset{H^*(L_P \setminus \{ 0 \}, \Qellbar)}{\otimes} \ \Qellbar \ \ =$$
$$= \ \ \pi_{G,*} \, (\IC_{{}_0Z^{P, \check\theta}} \, \boxtimes \, (\Qellbar)_{L_P \setminus \{ 0 \}}) \ \underset{H^*(L_P \setminus \{ 0 \}, \Qellbar)}{\otimes} \ \Qellbar \ \ =$$
$$= \ \ \pi_{Z,*} \IC_{{}_0Z^{P, \check\theta}} \ \otimes \ H^*(L_P \setminus \{ 0 \}, \Qellbar) \ \underset{H^*(L_P \setminus \{ 0 \}, \Qellbar)}{\otimes} \ \Qellbar \ \ =$$
$$= \ \ \BD \, \widetilde\Omega_P^{\check\theta} \, ,$$
completing the proof.
\end{proof}

\bigskip
\bigskip
\bigskip

\section{Proofs III -- Bernstein asymptotics}
\label{Proofs III -- Bernstein asymptotics}

\bigskip

We first recall two well-known facts about the nearby cycles functor; see Subsection~\ref{Recollections notation} above for our conventions and normalizations regarding nearby cycles.

\medskip

\sssec{Nearby cycles and fiber products}

Next let $Y \to \BA^1$ and $Y' \to \BA^1$ be two stacks or schemes over $\BA^1$, let $F$ and $F'$ be objects of $\D\bigl( Y|_{\BA^1 \setminus \{0\}}\bigr)$ and $\D\bigl(Y'|_{\BA^1 \setminus \{0\}}\bigl)$, and denote
$$F \underset{\ \BA^1}{\boxtimes} F' \ := \ \bigl( F \boxtimes F' \bigr) \big|^*_{Y \underset{ \, \BA^1}{\times} Y'}[-1](-\tfrac{1}{2}) \, .$$
Recall that we denote the unipotent nearby cycles functor by $\Psi$; denote by $\Psi_{full}$ the full nearby cycles functor. Then we have the following lemma (see \cite[Sec. 5]{BB}), which we will in fact only apply in the case where $\Psi_{full} = \Psi$:

\medskip

\begin{lemma}
\label{Psi and products}
On the product $Y|_{\{0\}} \times Y'|_{\{0\}}$ there exists a canonical isomorphism
$$\Psi_{full}(F \underset{\ \BA^1}{\boxtimes} F') \ \ = \ \ \Psi_{full}(F) \boxtimes \Psi_{full}(F') \, .$$
\end{lemma}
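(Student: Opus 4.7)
The plan is to reduce the claim to a standard Künneth formula for nearby cycles in two parameters, by passing through the product $Y \times Y' \to \BA^1 \times \BA^1$.

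First, I would observe that the fiber product $Y \underset{\BA^1}{\times} Y'$ is exactly the preimage in $Y \times Y'$ of the diagonal $\Delta \colon \BA^1 \hookrightarrow \BA^1 \times \BA^1$, and that over the point $(0,0)$ this preimage reduces to $Y|_{\{0\}} \times Y'|_{\{0\}}$, which matches the space on which the desired isomorphism is to hold. Under this identification, the shifted restriction $F \boxtimes_{\BA^1} F'$ is nothing other than the diagonal pullback $\Delta^*(F \boxtimes F')[-1](-\tfrac{1}{2})$ of the external product on $Y \times Y'$.

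Next, I would invoke the Künneth formula for full nearby cycles in two parameters, as formulated in SGA 7 and recalled in Section 5 of \cite{BB}: on the double special fiber $Y|_{\{0\}} \times Y'|_{\{0\}}$, the two-parameter nearby cycles functor with respect to the two coordinates of $\BA^1 \times \BA^1$ satisfies a canonical isomorphism
$$\Psi^{(2)}_{full}(F \boxtimes F') \ \cong \ \Psi_{full}(F) \boxtimes \Psi_{full}(F') \, .$$
The final step is to compare this two-parameter nearby cycles $\Psi^{(2)}_{full}$ along the coordinate axes with the one-parameter nearby cycles $\Psi_{full}$ applied to $\Delta^*(F \boxtimes F')$ along the diagonal; this comparison is standard for sheaves of external-product type, and the remaining shift and twist are absorbed into the $[-1](-\tfrac{1}{2})$ appearing in the definition of $\boxtimes_{\BA^1}$.

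The main obstacle is the careful bookkeeping of the various shifts and Tate twists, together with the justification of the identification between two-parameter and diagonal nearby cycles. The latter is precisely the reason the lemma is stated for $\Psi_{full}$ rather than for the unipotent version $\Psi$: restricting a two-parameter monodromy representation to the diagonal can produce non-unipotent monodromy, so the unipotent analog of the Künneth formula fails in general. Since the lemma will only be applied in cases where $\Psi_{full} = \Psi$, this subtlety does not affect any subsequent argument, but it must be respected in the proof itself.
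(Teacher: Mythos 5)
Your reduction of $F \underset{\ \BA^1}{\boxtimes} F'$ to the restriction of $F \boxtimes F'$ along the diagonal $\BA^1 \hookrightarrow \BA^1 \times \BA^1$ is correct, and the ``two-parameter'' Künneth you invoke (iterated nearby cycles in the two independent directions of an external product) is indeed unproblematic. The genuine gap is your final step: the assertion that comparing the two-parameter nearby cycles with the one-parameter nearby cycles of the diagonal restriction ``is standard for sheaves of external-product type'' is not a bookkeeping matter --- it \emph{is} the content of the lemma. In general, nearby cycles over a two-dimensional base do not commute with base change along a curve in the base (nor do iterated nearby cycles agree with nearby cycles along the diagonal); making this work for external products requires exactly the non-formal input at stake, namely Gabber's Künneth theorem for nearby cycles over a single trait (equivalently, the compatibility-with-base-change statement for nearby cycles over general bases applied to external products). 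The paper does not prove the lemma at all: it cites \cite[Sec.~5]{BB}, where this Künneth property of the full nearby cycles is recorded; in the topological picture the statement is immediate from the ordinary Künneth formula because the Milnor fiber of $Y \underset{\BA^1}{\times} Y'$ at a point of $Y|_{\{0\}} \times Y'|_{\{0\}}$ is the product of the two Milnor fibers, and any honest proof in the $\ell$-adic setting must supply the Gabber-type theorem that your word ``standard'' is silently carrying.

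A smaller inaccuracy: your explanation of why the lemma is stated for $\Psi_{full}$ rather than $\Psi$ is off. Unipotent tensor unipotent is again unipotent, so the diagonal monodromy does not leave the unipotent world in the way you describe; the unipotent version fails rather because eigenvalue pairs $\lambda, \lambda^{-1}$ occurring in the non-unipotent parts of $F$ and $F'$ combine to produce \emph{additional} unipotent summands in $\Psi(F \underset{\ \BA^1}{\boxtimes} F')$, so that $\Psi(F) \boxtimes \Psi(F')$ is in general only a direct summand of the left-hand side. This does not affect the applications in the paper, where Lemma \ref{unipotence lemma} guarantees $\Psi_{full} = \Psi$, but the justification should be corrected.
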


\medskip

\sssec{Unipotence in the equivariant setting}

Next we recall the following fact regarding the unipotence of the nearby cycles (see e.g. \cite[Lemma 11]{G2}):

\medskip

\begin{lemma}
\label{unipotence lemma}
Let $Y \to \BA^1$ be a scheme or a stack over $\BA^1$, and assume that there exists a $\BG_m$-action on $Y$ which lifts the standard $\BG_m$-action on~$\BA^1$. Let $F$ be a $\BG_m$-equivariant perverse sheaf on $Y|_{\BA^1 \setminus \{0\}}$. Then the full nearby cycles of $F$ are automatically unipotent, i.e., we have $\Psi_{full}(F) = \Psi(F)$.
\end{lemma}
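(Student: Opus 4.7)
The plan is to use the $\BG_m$-action to trivialize the restriction of the family $Y \to \BA^1$ to $\BA^1 \setminus \{0\}$ and to show that $\BG_m$-equivariance forces $F$ to be constant along the family direction; since the monodromy on $\Psi_{full}(F)$ comes entirely from traveling around $0 \in \BA^1$, it will then automatically be trivial (in particular, unipotent).

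First, I would observe that the standard $\BG_m$-action on $\BA^1$ is simply transitive on $\BA^1 \setminus \{0\}$ with trivial stabilizer at $1$. Using the lifted $\BG_m$-action on $Y$, the action map
$$a: \ \BG_m \times Y|_{\{1\}} \ \stackrel{\cong}{\longto} \ Y|_{\BA^1 \setminus \{0\}} \, , \qquad (t, y) \longmapsto t \cdot y \, ,$$
is then an isomorphism, with inverse $y' \longmapsto (\pi(y'), \pi(y')^{-1} \cdot y')$, where $\pi: Y \to \BA^1$ denotes the structure map. Under $a$ the structure map to $\BA^1 \setminus \{0\}$ becomes the projection onto the first factor, and the lifted $\BG_m$-action on the source corresponds to the action on $\BG_m \times Y|_{\{1\}}$ given by multiplication on the first factor alone.

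Second, by $\BG_m$-equivariant descent along the $\BG_m$-torsor $\BG_m \times Y|_{\{1\}} \to Y|_{\{1\}}$, the $\BG_m$-equivariant perverse sheaf $a^* F$ must be pulled back from $Y|_{\{1\}}$, i.e., of the form
$$a^* F \ \cong \ \IC_{\BG_m} \boxtimes G$$
for a uniquely determined perverse sheaf $G$ on $Y|_{\{1\}}$. In particular the restriction of $F$ to any $\BG_m$-orbit is, up to shift and twist, the constant sheaf, and hence the underlying local system of $F$ on $Y|_{\BA^1 \setminus \{0\}}$ in the direction of the family is trivial. Consequently the monodromy endomorphism of $\Psi_{full}(F)$ induced by the loop around $0 \in \BA^1$ is trivial, which in particular is unipotent, yielding $\Psi_{full}(F) = \Psi(F)$ as asserted.

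The main obstacle is the descent step, i.e., justifying that a $\BG_m$-equivariant perverse sheaf on $\BG_m \times Y|_{\{1\}}$ for the translation action on the first factor is necessarily pulled back from $Y|_{\{1\}}$. While this is a classical equivariant descent result for free actions, some care is needed when $Y$ is a stack, so that the isomorphism $a$ and the descent argument must be interpreted in the appropriate $2$-categorical sense; alternatively, one may argue that unipotence of nearby cycles can be checked after smooth base change and then reduce to the trivial family as above.
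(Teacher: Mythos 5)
Your first two steps are correct and are a natural way to begin: since $\BG_m$ acts simply transitively on $\BA^1 \setminus \{0\}$, the action map does give a $\BG_m$-equivariant identification $Y|_{\BA^1 \setminus \{0\}} \cong \BG_m \times Y|_{\{1\}}$ over $\BA^1\setminus\{0\}$, and equivariant descent for the free translation action does force $F$ to be of the form $\Qellbar_{\BG_m}[1](\tfrac{1}{2}) \boxtimes G$. (Note, incidentally, that the paper offers no proof of this lemma at all --- it is quoted from \cite[Lemma 11]{G2} --- so your argument is to be judged against the standard proof of that result.)

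The final step, however, is a genuine gap, and its conclusion is in fact false. You infer that because $F$ is constant along the $\BG_m$-orbits, ``the monodromy endomorphism of $\Psi_{full}(F)$ induced by the loop around $0$ is trivial.'' The monodromy on nearby cycles is not the monodromy of the local system $F$ along the family direction: it also records how the total space $Y$ degenerates over $0$, through the canonical and variation maps. The basic example relevant to this very paper already refutes the claim: take $Y = \{xy = t\} \to \BA^1_t$ with the action $s \cdot (x,y,t) = (sx, y, st)$, which lifts the standard action, and let $F$ be the constant perverse sheaf; then $\Psi_{full}(F)$ has \emph{nontrivial} unipotent monodromy (this is classical Picard--Lefschetz for a node, and is exactly what produces the nontrivial logarithm of monodromy in the limit of an $I_1$-degeneration of elliptic curves, and the ``Picard--Lefschetz oscillators'' of \cite{Sch1}). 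So triviality of $F$ along the orbits cannot imply triviality of the monodromy of $\Psi_{full}(F)$; the content of the lemma is only that the \emph{semisimple part} of the monodromy is trivial, i.e.\ unipotence, and that requires a further argument which your reduction does not supply. For instance, one must show that for every nontrivial tame character $\chi$ the $\chi$-isotypic piece of $\Psi_{full}(F)$ vanishes; this piece is computed by unipotent nearby cycles of the twist $F \otimes f^*L_\chi$ by the Kummer local system $L_\chi$ pulled back from the base, and one uses the $\BG_m$-equivariance of $F$ (equivalently, your product decomposition) to show that such a twist, being equivariant against a nontrivial character of $\BG_m$, contributes nothing --- this is the substance of \cite[Lemma 11]{G2}, and it is precisely the point your proposal skips over.
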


\medskip

\ssec{Factorization of nearby cycles}

\begin{proof}[Proof of Proposition \ref{factorization of nearby cycles proposition}]
It suffices to prove the claim on the level of the local models $Y^B$. By Lemma \ref{unipotence lemma} the full nearby cycles functor of the principal degeneration $\VinBun_G^{princ}$ is unipotent. We can thus apply Lemma \ref{Psi and products} above for the unipotent nearby cycles $\Psi^{princ}$; the claim then follows from the factorization in families for the local models $Y^B$ stated in Lemma \ref{factorization in families} above.
\end{proof}

\bigskip

\ssec{Proof of Theorem \ref{function corresponding to Psi}}

\sssec{The complexes $\Omega^{\check\theta}$ and $U^{\check\theta}$ from \cite{BG2}}
Let $\check\theta \in \Lambdach_G^{pos}$. In \cite{BG1}, \cite{BG2}, and \cite{BFGM} certain complexes $\Omega^{\check\theta}$ and $U^{\check\theta}$ on $X^{\check\theta}$ are introduced; we refer to these articles for their definitions and motivation. Here we will only be interested in these complexes on the level of the Grothendieck group, and in a description of the complex $\widetilde \Omega_B^{\check\theta}$ in the Grothendieck group in terms of $\Omega^{\check\theta}$ and $U^{\check\theta}$ which will allow us to compute the function corresponding to $\widetilde \Omega_B^{\check\theta}$ under the sheaf-function correspondence.

\medskip

To state the descriptions of $\Omega^{\check\theta}$ and $U^{\check\theta}$, we introduce the following notation. First, for $\check\theta_1, \check\theta_2 \in \Lambdach_G^{pos}$ we denote by
$$\add: \ X^{\check\theta_1} \times X^{\check\theta_2} \ \longto \ X^{\check\theta_1 + \check\theta_2}$$
the addition map of $\Lambdach_G^{pos}$-valued effective divisors on $X$.
Next, to any Kostant partition
$$\CK: \ \ \thetacheck \ = \ \sum_{\betacheck \in \check R^+} n_{\betacheck} \betacheck$$
of a positive coweight $\thetacheck \in \Lambdach_G^{pos}$ we associate the partially symmetrized power
$$X^{\CK} \ \ := \ \ \prod_{\betacheck \in \check R^+} X^{(n_{\betacheck})}$$
of the curve $X$. We denote by
$$i_{\CK}: \ \ X^{\CK} \ \longto \ X^{\thetacheck}$$
the finite map defined by adding $\Lambdach_G^{pos}$-valued divisors.
Finally, for a local system $L$ on $X$ we denote by $\Lambda^{(n)}(L)$ the $n$-th external exterior power of $L$ on $X^{(n)}$.
We can now state the following result from \cite[Section 3.3]{BG2}:

\medskip

\begin{lemma}
\label{Omega lemma}
In the Grothendieck group on $X^{\check\theta}$ we have:
$$\Omega^{\check\theta} \ \ = \ \ \bigoplus_{\CK \, \in \, \Kost(\check\theta)} i_{\CK,*} \, \bigl( \, \underset{\check\beta}{\boxtimes} \, \Lambda^{(n_{\check\beta})}(\Qellbar_X) \, [n_{\check\beta}](n_{\check\beta})\bigr)$$
\end{lemma}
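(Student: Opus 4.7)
The plan is to verify the identity in the Grothendieck group by reducing to a local computation at a single point, using compatible factorization structures on both sides. On the left, $\Omega^{\check\theta}$ inherits a factorization structure from the factorization of the Zastava spaces $Z^{B,\check\theta}$ recalled in Subsection \ref{Zastava basic properties}: over the disjoint locus $X^{\check\theta_1} \stackrel{\circ}{\times} X^{\check\theta_2} \subset X^{\check\theta_1+\check\theta_2}$, the complex $\Omega^{\check\theta_1+\check\theta_2}$ restricts to the external product $\Omega^{\check\theta_1} \boxtimes \Omega^{\check\theta_2}$, up to standard shifts and twists. On the right, any Kostant partition $\CK$ of $\check\theta_1+\check\theta_2$ whose underlying $\Lambdach_G^{pos}$-valued divisor lies in the disjoint locus decomposes uniquely as $\CK_1 + \CK_2$ with $\CK_i \in \Kost(\check\theta_i)$, and the maps $i_{\CK}$ together with the exterior-power factors $\Lambda^{(n_{\check\beta})}$ split compatibly over such a decomposition.

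First I would use these compatible factorization structures to reduce, by induction on $\check\theta$, the verification of the identity to its behavior over the main diagonal $X \hookrightarrow X^{\check\theta}$, $x \mapsto \check\theta \cdot x$. Indeed, over the complement of the main diagonal the statement becomes a combination of the identities for strictly smaller positive coweights, handled inductively; only the behavior along the main diagonal remains to be established.

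Second, at a point $\check\theta \cdot x$ of the main diagonal the stalk of $\Omega^{\check\theta}$ is computed via the fiber of ${}_0Z^{B,\check\theta}$ over $\check\theta \cdot x$, which is realized as the intersection of opposite semi-infinite orbits in the Beilinson-Drinfeld affine Grassmannian $\Gr_G$. Its virtual cohomology, computed using the Mirkovi\'c-Vilonen description of such intersections together with the geometric Gindikin-Karpelevich formula, should reproduce the value $\sum_{\CK \in \Kost(\check\theta)} \prod_{\check\beta} [n_{\check\beta}](n_{\check\beta})$ of the Kostant partition function, which matches the pullback of the right-hand side to $\check\theta \cdot x$. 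Combined with the factorization step, this yields the desired identity on all of $X^{\check\theta}$.

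The main obstacle is this local computation at the single point: one must match the geometric contribution of the semi-infinite slice with the combinatorics of Kostant partitions and, crucially, explain the appearance of the exterior powers $\Lambda^{(n_{\check\beta})}(\Qellbar_X)$ rather than symmetric powers, with the correct shifts and twists. This reflects a Koszul-duality phenomenon --- the exterior algebra on positive coroots is Koszul dual to the symmetric algebra governing the Langlands-dual unipotent radical --- and tracking this duality at the level of perverse sheaves, rather than merely at the numerical level, constitutes the technical heart of the argument.
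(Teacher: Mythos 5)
You should note first that the paper does not actually prove this statement: it is quoted verbatim from \cite[Section 3.3]{BG2}, and the complexes $\Omega^{\check\theta}$, $U^{\check\theta}$ are themselves only defined by reference to \cite{BG1}, \cite{BG2}, \cite{BFGM}. Any genuine proof therefore has to start from the actual definition of $\Omega^{\check\theta}$, which your proposal never fixes --- and this is where it goes wrong concretely. In your key local step you propose to compute the stalk of $\Omega^{\check\theta}$ at $\check\theta\cdot x$ ``via the fiber of ${}_0Z^{B,\check\theta}$ over $\check\theta\cdot x$'', i.e.\ you implicitly take $\Omega^{\check\theta}$ to be $\pi_{Z,!}\,\IC_{{}_0Z^{B,\check\theta}}$. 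But that complex is $\widetilde\Omega_B^{\check\theta}$, a different object: by Lemma \ref{tilde Omega lemma} one has $\widetilde\Omega_B^{\check\theta}=\sum_{\check\theta_1+\check\theta_2=\check\theta}\add_*(\Omega^{\check\theta_1}\boxtimes U^{\check\theta_2})$ in the Grothendieck group, so $\Omega^{\check\theta}$ is only the ``exterior-algebra'' constituent. The fiber of ${}_0Z^{B,\check\theta}$ over $\check\theta x$ (the open intersection of opposite semi-infinite orbits, as you say) has class given, up to duality and normalization, by the full Gindikin--Karpelevich ratio $\sum_{\CK}(1-q)^{|R_{\CK}|}q^{-|\CK|}$ --- exactly the function appearing in Theorem \ref{function corresponding to Psi} --- which mixes the $\Omega$- and $U$-contributions. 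So the local computation you sketch would verify a statement in the vicinity of Lemma \ref{tilde Omega lemma} combined with Lemma \ref{U lemma}, not the lemma asserted here.

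Your stated matching target is also incorrect: at the deep point $\check\theta x$ the right-hand side of the lemma is \emph{not} the Kostant partition function. The stalk of the external exterior power $\Lambda^{(n_{\check\beta})}(\Qellbar_X)$ at the totally degenerate divisor $n_{\check\beta}\,x$ vanishes whenever $n_{\check\beta}\geq 2$ (this vanishing is precisely what the paper exploits in the proof of Theorem \ref{function corresponding to Psi}), so only multiplicity-free Kostant partitions contribute, giving $\sum_S(-1)^{|S|}q^{-|S|}$ over subsets $S\subset\check R^+$ with $\sum_{\check\beta\in S}\check\beta=\check\theta$, i.e.\ the ``numerator'' $\prod_{\check\alpha}(1-q^{-1}e^{\check\alpha})$ rather than the partition function (the latter is the class of $U^{\check\theta}$, Lemma \ref{U lemma}). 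Your factorization-plus-diagonal reduction is sound in spirit (modulo the caveat that in the Grothendieck group one must identify the restriction of both sides to each diagonal stratum as complexes, not merely match one stalk), and your closing remark about Koszul duality --- $\Omega$ corresponding to the exterior algebra on the positive coroots, $U$ to $U(\check{\mathfrak n})$ --- is indeed the conceptual content of the BG2 result. But the one concrete step you specify targets the wrong complex and the wrong local answer, and the technical heart is explicitly deferred, so the proposal does not establish the lemma.
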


\bigskip

Similarly we recall from \cite[Theorem 4.5]{BFGM}:

\medskip

\begin{lemma}
\label{U lemma}
In the Grothendieck group on $X^{\check\theta}$ we have:
$$U^{\check\theta} \ \ = \ \ \bigoplus_{\CK \, \in \, \Kost(\check\theta)} i_{\CK,*} \, \Qellbar_{X^{\CK}}[0](0)$$
\end{lemma}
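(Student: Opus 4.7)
The plan is to deduce the lemma from \cite[Theorem 4.5]{BFGM} by reducing the identity to a local/stalk computation via the factorization structure of the Zastava spaces, and then matching local factors against Kostant's PBW description of the weight subspaces of $U(\check{\mathfrak n})$. Since both sides of the claimed equality live in the Grothendieck group on $X^{\check\theta}$, it will suffice to show that their classes agree when restricted to each stratum of the diagonal stratification of $X^{\check\theta}$; equivalently, one checks agreement at the generic point of each stratum.

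First I would record that the complex $U^{\check\theta}$ inherits a factorization from the factorization of ${}_0Z^{B,\check\theta}$ over $X^{\check\theta}$: for $\check\theta=\check\theta_1+\check\theta_2$ and disjoint divisors $D_1\in X^{\check\theta_1}$, $D_2\in X^{\check\theta_2}$, the restriction of $U^{\check\theta}$ to $X^{\check\theta_1}\stackrel{\circ}{\times} X^{\check\theta_2}$ is canonically isomorphic to the external product $U^{\check\theta_1}\boxtimes U^{\check\theta_2}$. The right-hand side satisfies an identical factorization, since a Kostant partition of $\check\theta$ supported on $D_1\sqcup D_2$ is equivalent to a pair of Kostant partitions of $\check\theta_1$ and $\check\theta_2$ supported on $D_1$ and $D_2$ respectively, and the map $i_{\CK}$ factorizes accordingly over the disjoint locus. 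Thus both classes are determined by their restrictions to the small diagonals, and it will be enough to compute the stalk of each side at a point of the form $\check\theta\cdot x \in X^{\check\theta}$ with $x\in X$.

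Second I would perform that local stalk computation. The stalk of $\bigoplus_{\CK} i_{\CK,*}\,\Qellbar_{X^{\CK}}$ at $\check\theta\cdot x$ is visibly the direct sum of one copy of $\Qellbar$ for each $\CK\in\Kost(\check\theta)$, placed in cohomological degree $0$ with trivial Tate twist. On the other hand, the stalk of $U^{\check\theta}$ at $\check\theta\cdot x$ is identified in \cite[Theorem 4.5]{BFGM} with (a graded version of) the weight $\check\theta$ subspace of the universal enveloping algebra $U(\check{\mathfrak n})$ of the Langlands dual nilpotent radical, via the Tannakian interpretation of the Zastava space in terms of the semi-infinite orbit in $\Gr_G$. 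By Kostant's theorem the dimension of this weight subspace equals $|\Kost(\check\theta)|$, and a PBW basis provides one distinguished basis vector for each $\CK\in\Kost(\check\theta)$. Passing to the class in the Grothendieck group, all higher cohomological/graded corrections collapse and the two local pictures match.

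The main obstacle is the second step: the identification of the stalk of $U^{\check\theta}$ with a PBW-type basis of $U(\check{\mathfrak n})_{\check\theta}$ is the substantive input, and is precisely the content of \cite[Theorem 4.5]{BFGM}; in particular it relies on the analysis of the semi-infinite orbit in $\Gr_G$ carried out there. Once this identification is in hand, the factorization argument of the first step globalizes it to all of $X^{\check\theta}$ and yields the formula in the Grothendieck group as stated.
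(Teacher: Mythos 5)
The paper does not actually prove this lemma: it is recalled verbatim, in Grothendieck-group form, from \cite[Theorem 4.5]{BFGM}, so there is no internal argument to compare against. Your proposal invokes the same theorem, but only as a \emph{local} input (an identification of the stalk at $\check\theta\cdot x$ with the weight space $U(\check{\mathfrak n})_{\check\theta}$), and then tries to recover the global formula by factorization plus a stalk check; it is this supplementary argument that has genuine gaps. First, equality of classes in the Grothendieck group of constructible complexes on $X^{\check\theta}$ does \emph{not} follow from agreement of stalks at (generic points of) the diagonal strata: two rank-one local systems on the multiplicity-free locus of $X^{(n)}$ --- the constant sheaf and the sign local system underlying $\Lambda^{(n)}(\Qellbar_X)$, which actually occurs in the companion Lemma \ref{Omega lemma} --- have identical stalks at every point of that stratum but distinct classes. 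Factorization does not repair this, since both carry factorization structures with the same local factors; what must be matched is the restriction to each stratum as a class, monodromy included, not merely its generic rank.

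Second, and more seriously, the assertion that ``all higher cohomological/graded corrections collapse'' upon passing to the Grothendieck group is false: cohomological shifts contribute signs and Tate twists contribute weights, and these are precisely what survive to produce the powers of $q$ in Theorem \ref{function corresponding to Psi} (note the shifts and twists $[n_{\check\beta}](n_{\check\beta})$ that are retained in Lemma \ref{Omega lemma} for exactly this reason). A dimension count via Kostant's theorem therefore shows at most that the ranks of the two sides agree at points of the form $\check\theta\cdot x$; it cannot establish the precise claim that the class equals $\bigoplus_{\CK\in\Kost(\check\theta)} i_{\CK,*}\,\Qellbar_{X^{\CK}}[0](0)$, i.e.\ with trivial shift, trivial twist, and trivial monodromy along each $X^{\CK}$. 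The route the paper takes is simply to quote \cite[Theorem 4.5]{BFGM}, which already supplies the decomposition with this normalization; a self-contained proof would have to reproduce the analysis carried out there, not just the equality $\dim U(\check{\mathfrak n})_{\check\theta} = |\Kost(\check\theta)|$.
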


\medskip

\sssec{Description of $\widetilde \Omega_B^{\check\theta}$ in the Grothendieck group}
We can now recall the aforementioned description of the complex $\widetilde \Omega_B^{\check\theta}$ on $X^{\check\theta}$ in the Grothendieck group, which follows directly from Corollary 4.5 of \cite{BG2}:

\begin{lemma}
\label{tilde Omega lemma}
In the Grothendieck group on $X^{\check\theta}$ we have:
$$\widetilde \Omega_B^{\check\theta} \ \ = \ \ \sum_{\check\theta_1 + \check\theta_2 = \check\theta} add_* \Bigl( \Omega^{\check\theta_1} \; \boxtimes \ U^{\check\theta_2} \Bigr)$$
Here the sum runs over all pairs of positive coweights $(\check\theta_1, \check\theta_2)$ satisfying $\check\theta_1 + \check\theta_2 = \check\theta$.
\end{lemma}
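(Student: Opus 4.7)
My plan is to derive the identity by combining two ingredients already in the literature: the known computation of pushforwards of IC- and constant sheaves from the full (closed) Zastava space $Z^{B,\check\theta}$ to $X^{\check\theta}$, and the defect stratification of $Z^{B,\check\theta}$ recalled in Subsection \ref{Zastava stratification}.

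First I would unwind the principal case $P = B$, $M = T$. Here the $G$-positive Hecke stack simplifies to $\CH_{T, G-pos}^{-\check\theta, -\check\theta+\check\theta'} = X^{\check\theta'} \times \Bun_{T, -\check\theta+\check\theta'}$, so after fixing the base $T$-bundle the defect-$\check\theta'$ stratum of $Z^{B,\check\theta}$ identifies with $X^{\check\theta'} \stackrel{\circ}{\times} {}_0Z^{B,\check\theta - \check\theta'}$, and the restriction of $\pi_Z$ to this stratum is the composition of the addition map on $\Lambdach_G^{pos}$-valued divisors with the two obvious projections. In particular, by the factorization property of Zastava spaces, all pushforwards along $\pi_Z$ will be compatible with the addition map on $X^{\check\theta}$.

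Second, I would write a distinguished triangle relating $j_! \IC_{{}_0Z^{B,\check\theta}}$ on $Z^{B,\check\theta}$ (with $j$ the open inclusion) to $\IC_{Z^{B,\check\theta}}$ and to correction terms supported on the lower defect strata, whose stalks are controlled by the standard computation of $*$-stalks of $\IC_{Z^{B,\check\theta}}$ in \cite{BFGM}. Pushing forward along $\pi_Z$, I would then invoke: (i) the identification of $\pi_{Z,*} \IC_{Z^{B,\check\theta}}$ with $\Omega^{\check\theta}$ from \cite{BG2} (compare Lemma \ref{Omega lemma}); (ii) the identification of the pushforward of the constant sheaf of $Z^{B,\check\theta}$ with $U^{\check\theta}$ from \cite{BFGM} (compare Lemma \ref{U lemma}); and (iii) the factorization of the strata established above. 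Distributing the contributions via the addition map, the resulting expression in the Grothendieck group reorganizes itself as $\sum_{\check\theta_1 + \check\theta_2 = \check\theta} \mathrm{add}_* (\Omega^{\check\theta_1} \boxtimes U^{\check\theta_2})$, which is exactly the claim.

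The main obstacle, and the place where the work concentrates, is the combinatorial bookkeeping that matches the alternating sum of contributions across defect strata to the clean product decomposition over pairs $(\check\theta_1, \check\theta_2)$. This combinatorics has already been carried out in Corollary 4.5 of \cite{BG2}; thus rather than redo it, I would invoke that corollary directly, with a routine verification that our normalization $\widetilde \Omega_B^{\check\theta} = \pi_{Z,!} \IC_{{}_0Z^{B,\check\theta}}$, together with our conventions for perverse shifts and Tate twists, matches the object and normalizations considered there.
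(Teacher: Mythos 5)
Your proposal ultimately rests on the same step as the paper: the paper gives no independent argument for this lemma and simply observes that it follows directly from Corollary 4.5 of \cite{BG2}, which is precisely the citation you invoke (together with the routine check of normalizations) after your preliminary sketch. That sketch is only motivational and is not load-bearing here — though note in passing that the defect-$\check\theta'$ stratum of $Z^{B,\check\theta}$ is $X^{\check\theta'} \times {}_0Z^{B,\check\theta-\check\theta'}$ with \emph{no} disjointness condition on the divisors, so the symbol $\stackrel{\circ}{\times}$ there is inaccurate — but since you defer the actual combinatorics to \cite{BG2}, this does not affect the correctness of the argument.
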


\medskip

\sssec{Proof of Theorem \ref{function corresponding to Psi}}

\begin{proof}[Proof of Theorem \ref{function corresponding to Psi}]
Given a Kostant partition $\CK: \check\theta = \sum_{\betacheck \in \check R^+} n_{\betacheck} \betacheck$ we define $|\CK| := \sum_{\betacheck \in \check R^+} n_{\betacheck}$; in other words, we define $|\CK| := \dim X^{\CK}$. Let now $x \in X(\BF_q)$. Then Lemmas \ref{Omega lemma}, \ref{U lemma}, and \ref{tilde Omega lemma} above together imply that in the Grothendieck group we have
$$(\BD \widetilde \Omega_B^{\check\theta})|^*_{\check\theta x} \ = \ \sum_{\check\theta_1 + \check\theta_2 = \check\theta} \, \sum_{\CK_1, \CK_2} \, \Qellbar \, [2 \, |\CK_1|] \, (|\CK_1|) \ \otimes \ \Qellbar \, [\CK_2] \, (0)$$
where $\CK_1$ ranges over the set $\Kost(\check\theta_1)$, where $\CK_2$ ranges over the set $\Kost(\check\theta_2)$, and where $\CK_2$ is \textit{simple} in the sense that each integer $n_{\check\beta}$ appearing in $\CK_2$ is either $0$ or $1$. Indeed, if one of the integers $n_{\check\beta}$ is larger than $1$, the corresponding stalk of the external exterior power $\Lambda^{(n_{\check\beta})}(\Qellbar_X)$ vanishes. To reformulate the last formula, note that giving a sum decomposition $\check\theta_1 + \check\theta_2 = \check\theta$ and Kostant partitions $\CK_1$ and $\CK_2$ as above with $\CK_2$ simple is equivalent to giving a Kostant partition $\CK$ of $\check\theta$ together with a subset $S$ of the set of roots appearing in $\CK$. Thus the above formula can be rewritten as
$$(\BD \widetilde \Omega_B^{\check\theta})|^*_{\check\theta x} \ = \ \sum_{\CK} \, \sum_{S \subset R_{\CK}} \, \Qellbar \, [2 \, |\CK| - |S|] \, (|\CK| - |S|)$$
where $\CK$ ranges over the set $\Kost(\check\theta)$, where $S$ ranges over all subsets of $R_{\CK}$, and where $|S|$ denotes the cardinality of $S$. Passing to the value of the corresponding function, we find
$$\sum_{\CK} \, \sum_{S \subset R_{\CK}} \, (-1)^{|S|} \, q^{|S|} \, q^{-|K|} \, .$$
But since
$$\sum_{S \subset R_{\CK}} \, (-1)^{|S|} \, q^{|S|} \ \ = \ \ (1-q)^{R_{\CK}}$$
the last expression is equal to
$$\sum_{\CK \, \in \, \Kost(\check \theta)} (1-q)^{|R_{\CK}|} \; q^{-|\CK|} \, .$$
Observing that the codimension of the stratum of defect $\check\theta$ is $\langle 2 \rho, \check\theta \rangle$ and recalling our normalization of IC-sheaves to be pure of weight $0$, the formula follows as stated.
\end{proof}

\bigskip
\medskip

\sssec{Proof of Theorem \ref{theorem Sakellaridis's conjecture}}
\label{Proof of Sakellaridis's conjecture}

\begin{proof}[Proof of Theorem \ref{theorem Sakellaridis's conjecture}]
We first recall Sakellaridis's Gindikin-Karpelevich-formula for $\Asymp(\phi_0)$, using the same notation as in \cite{Sak1}, \cite{Sak2}. In particular we use the notation
$$e^{\check\lambda} \ := \ q^{\langle \rho, \check\lambda \rangle} \, \mathbbold{1}_{\check\lambda}$$
for a coweight $\lambdach \in \Lambdach_G$. Then Sakellaridis's formula from \cite[Section 6]{Sak1} states:
$$\Asymp(\phi_0) \ = \ \prod_{\check\alpha \in \check R^+} \frac{1 - e^{\check\alpha}}{1 - q^{-1} e^{\check\alpha}}$$
Expanding the denominators as geometric series, the formula becomes
$$\prod_{\check\alpha \in \check R^+} \Bigl( e^{0 \cdot \check\alpha} + \sum_{i_{\check\alpha}=1}^{\infty} q^{-i_{\check\alpha}} (1-q) e^{i_{\check\alpha} \check\alpha} \Bigr) \, .$$
Multiplying out we obtain the expression
$$\sum_{\CK: \ \sum_{\check\alpha} i_{\check\alpha} \check\alpha} \ (1-q)^{\sum_{\check\alpha: \, i_{\check\alpha} \neq 0} 1} \cdot q^{- \sum_{\check\alpha} i_{\check\alpha}} \ e^{\sum_{\check\alpha} i_{\check\alpha} \check\alpha}$$
where the sum is running over all Kostant partitions $\CK: \sum_{\check\alpha} i_{\check\alpha} \check\alpha$ of all positive coweights of $G$. Since $\sum_{\check\alpha: \, i_{\check\alpha} \neq 0} 1 = |R_{\CK}|$ and since $\sum_{\check\alpha} i_{\check\alpha} = |\CK|$ the last expression in turn is equal to
$$\sum_{\check\theta \in \Lambdach_G^{pos}} \sum_{\CK \in \Kost(\check\theta)} (1-q)^{|R_{\CK}|} \, q^{-|\CK|} \, q^{\langle \rho, \check\theta \rangle} \, \mathbbold{1}_{\check\theta} \, ,$$
as desired.
\end{proof}

\newpage


\begin{thebibliography}{8}









\bibitem[BB]{BB}
Beilinson, A. A.,
Bernstein, I. N.,
A proof of Jantzen conjectures,
I. M. Gelfand Seminar, Adv. Soviet Math., 16, Part 1, 1--50, Amer. Math. Soc., Providence (1993).


\bibitem[B]{gluing perverse sheaves}
Beilinson, A. A.,
How to glue perverse sheaves,
K-theory, arithmetic and geometry, Moscow, 1984--1986, 42--51, \textit{Lecture Notes in Math.} \textbf{1289} (1987).






\bibitem[BD1]{BD1}
Beilinson, A. A.,
Drinfeld, V.,
Quantization of Hitchin's integrable system and Hecke eigensheaves,
preprint, available at: \\
http://www.math.uchicago.edu/$\sim$mitya/langlands/hitchin/BD-hitchin.pdf


\bibitem[BD2]{BD2}
Beilinson, A. A.,
Drinfeld, V.,
Chiral Algebras,
\textit{AMS Colloquium Publications} \textbf{51}, AMS (2004).



\bibitem[BFO]{BFO}
Bezrukavnikov, R.,
Finkelberg, M.,
Ostrik, V.,
Character D-modules via Drinfeld center of Harish-Chandra bimodules,
\textit{Invent. Math.} \textbf{188} (2012), 589--620.


\bibitem[BK]{BK}
Bezrukavnikov, R.,
Kazhdan, D.,
Geometry of second adjointness for p-adic groups,
arXiv:1112.6340.


\bibitem[BKu]{BKu}
Brion, M., Kumar, S.,
Frobenius Splitting Methods in Geometry and Representation Theory,
\textit{Progress in Mathematics}, \textbf{231} (2005), Birkh\"auser, Boston.







\bibitem[Br]{Br}
Braden, T.,
Hyperbolic localization of intersection cohomology,
\textit{Transform. Groups}~\textbf{8} (2003), no. 3, 209--216.



\bibitem[BFGM]{BFGM}
Braverman, A.,
Finkelberg, M.,
Gaitsgory, D.,
Mirkovi\'c, I.,
Intersection cohomology of Drinfeld's compactifications,
\textit{Selecta Math.} (N.S.) \textbf{8} (2002), no. 3, 381--418.



\bibitem[BG1]{BG1}
Braverman, A.,
Gaitsgory, D.,
Geometric Eisenstein series,
\textit{Invent. Math.} \textbf{150} (2002), no. 2, 287--384.


\bibitem[BG2]{BG2}
Braverman, A.,
Gaitsgory, D.,
Deformations of local systems and Eisenstein series,
\textit{Geom. Funct. Anal.} \textit{17} (2008), no. 6, 1788--1850.



\bibitem[CY]{CY}
Chen, T.-H.,
Yom Din, A.,
A formula for the geometric Jacquet functor and its character sheaf analogue,
arXiv:1507.00606.


\bibitem[DCP]{DCP}
De Concini, C., Procesi, C.,
Complete symmetric varieties,
\textit{Invariant theory}, Lecture Notes in Math. \textbf{996}, Springer Berlin (1983), 1--44.





\bibitem[Dr1]{Varieties of modules of F -sheaves}
Drinfeld, V.,
Varieties of modules of F -sheaves,
\textit{Functional Analysis and its Applications} \textbf{21} (1987), 107--122.


\bibitem[Dr2]{Cohomology of compactified manifolds of modules of F -sheaves of rank 2}
Drinfeld, V.,
Cohomology of compactified manifolds of modules of F -sheaves of rank 2,
\textit{Journal of Soviet Mathematics} \textbf{46} (1989), 1789--1821.



\bibitem[DrW]{DrW}
Drinfeld, V.,
Wang, J.,
On a strange invariant bilinear form on the space of automorphic forms,
arXiv: 1503.04705.


\bibitem[DrG1]{DrG1}
Drinfeld, V.,
Gaitsgory, D.,
Compact generation of the category of D-modules on the stack of $G$-bundles on a curve,
arXiv:1112.2402.


\bibitem[DrG2]{DrG2}
Drinfeld, V.,
Gaitsgory, D.,
Geometric constant term functor(s),
arXiv:1311.2071.


\bibitem[ENV]{ENV}
Emerton, M.,
Nadler, D.,
Vilonen, K.
A geometric Jacquet functor,
\textit{Duke Math. J.}  \textbf{125}, (2004), 267--278.



\bibitem[FM]{FM}
Finkelberg, M.,
Mirkovi\'c, I.,
Semiinfinite Flags I,
Differential topology, infinite-dimensional Lie algebras, and applications, \textit{Amer. Math. Soc. Transl. Ser. 2} \textbf{194} (1999), 81--112.



\bibitem[FFKM]{FFKM}
Feigin, B.,
Finkelberg, M.,
Kuznetsov, A.,
Mirkovi\'c, I.,
Semiinfinite Flags II,
Differential topology, infinite-dimensional Lie algebras, and applications, \textit{Amer. Math. Soc. Transl. Ser. 2} \textbf{194} (1999), 113--148.






\bibitem[G1]{G1}
Gaitsgory, D.,
A strange functional equation for Eisenstein series and miraculous duality on the moduli stack of bundles,
arXiv: 1404.6780.


\bibitem[G2]{G2}
Gaitsgory, D.,
Construction of central elements in the affine Hecke algebra via nearby cycles,
\textit{Inv. Math.} \textbf{144} (2001), 253--280.
See v1 of arXiv: 9912074.


\bibitem[G3]{G3}
Gaitsgory, D.,
Notes on the Geometric Langlands Program,
available from http://www.math.harvard.edu/$\sim$gaitsgde/GL/


\bibitem[G4]{G4}
Gaitsgory, D.,
Outline of the proof of the geometric Langlands conjecture for GL(2),
arXiv:1302.2506.






















\bibitem[Laf]{Lafforgue JAMS}
Lafforgue, L.,
Une compactification des champs classifiant les chtoucas de Drinfeld.
\textit{J. Amer. Math. Soc.} \textbf{11} (1998), 1001--1036.



 
 

 
\bibitem[MV]{MV}
Mirkovi\'c, I.,
Vilonen, K.,
Geometric Langlands duality and representations of algebraic groups over commutative rings,
\textit{Ann. of Math.} \textbf{166} (2007), 95--143.
 
 
  
 



\bibitem[Pu]{Pu}
Putcha, M.,
Linear algebraic monoids,
\textit{London Mathematical Society Lecture Note Series} \textbf{133},
Cambridge University Press, Cambridge, 1988.










\bibitem[Re]{Re}
Renner, L.,
Linear algebraic monoids,
\textit{Encyclopaedia of Mathematical Sciences} \textbf{134},
Invariant Theory and Algebraic Transformation Groups V,
Springer Verlag, Berlin, 2005.

 
\bibitem[Ri1]{Ri1}
Rittatore, A.,
Monoides alg\'ebriques et plongements des groupes,
Th\`ese de Doctorat, Institut Fourier, 1997.
 
 
 \bibitem[Ri2]{Ri2}
Rittatore, A.,
Algebraic monoids and group embeddings,
\textit{Transform. Groups} \textbf{3} (1998), no. 4, 375--396.
 
 
\bibitem[Ri3]{Ri3}
Rittatore, A.,
Very flat reductive monoids,
\textit{Publ. Mat. Urug.} \textbf{9} (2001), no. 4, 93--121.


\bibitem[Ri4]{Ri4}
Rittatore, A.,
Algebraic monoids with affine unit group are affine,
\textit{Transform. Groups} \textbf{12} (2007), no. 3, 601--605.


\bibitem[Sak1]{Sak1}
Sakellaridis, Y.,
Inverse Satake transforms,
arXiv:1410.2312.


\bibitem[SakV]{SakV}
Sakellaridis, Y.,
Venkatesh, A.,
Periods and harmonic analysis on spherical varieties,
arXiv:1203.0039.


\bibitem[Sak2]{Sak2}
Sakellaridis, Y.,
Non-categorical structures in harmonic analysis,
MSRI talk, 2014,
available at http://www.msri.org.


\bibitem[Sch1]{Sch1}
Schieder, S.,
Picard-Lefschetz oscillators for the Drinfeld-Lafforgue-Vinberg degeneration for $\SL_2$
arXiv:1411.4206.



\bibitem[Sch2]{Sch2}
Schieder, S.,
The Harder-Narasimhan stratification of the stack of G-bundles via Drinfeld's compactifications,
\textit{Selecta Mathematica} \textbf{21} (2015), 763--831.










\bibitem[V]{V}
Vinberg, E. B.,
On reductive algebraic semigroups,
E. B. Dynkin Seminar,
\textit{Amer. Math. Soc. Transl. Ser. 2} \textbf {169}, 145--182.





\bibitem[W1]{W1}
Wang, J.,
On the reductive monoid associated to a parabolic subgroup,
arXiv:1602.07233.


\bibitem[W2]{W2}
Wang, J.,
PhD Thesis,
forthcoming.






\end{thebibliography}
\end{document}